\DeclarePairedDelimiter\floor{\lfloor}{\rfloor}
\numberwithin{equation}{section}
\newtheorem{theorem}{Theorem}[section]
\newtheorem{proposition}[theorem]{Proposition}
\newtheorem{question}[theorem]{Question}
\newtheorem{corollary}[theorem]{Corollary}
\newtheorem{lemma}[theorem]{Lemma}
\theoremstyle{definition}
\newtheorem{definition}[theorem]{Definition}
\newtheorem{example}[theorem]{Example}
\newenvironment{rem}
{\reminner}
{\endreminner\@endpetrue}
\DeclareMathOperator\lk{\mathrm{lk}}
\newcommand{\field}{\mathbbm{k}}
\newcommand{\ZZ}{{\mathbb Z}}
\newcommand{\NE}{{\mathfrak{E}}}
\tikzset{
	labl/.style={anchor=south, rotate=90, inner sep=.5mm}
}
\title{Non-Eulerian Dehn--Sommerville relations}
\author{Connor Sawaske, Lei Xue\thanks{The second author's research was partially supported by a graduate fellowship from NSF grant DMS-1664865.}\\
	\small \texttt{connor@sawaske.com, lxue@uw.edu}
}
\begin{document}
	\maketitle
	
\begin{abstract}
The classical Dehn--Sommerville relations assert that the $h$-vector of an Eulerian simplicial complex is symmetric. We establish three generalizations of the Dehn--Sommerville relations: one for the $h$-vectors of pure simplicial complexes, another one for the flag $h$-vectors of balanced simplicial complexes and graded posets, and yet another one for the toric $h$-vectors of graded posets with restricted singularities. In all of these cases, we express any failure of symmetry in terms of ``errors coming from the links." For simplicial complexes, this further extends Klee's semi-Eulerian relations.
\end{abstract}

\section{Introduction}\label{sect:intro}

\indent  In this paper we generalize Dehn--Sommerville relations in three ways: the first one relates to the $h$-vectors of all pure simplicial complexes, the second one deals with the flag $h$-vectors of balanced simplicial complexes and graded posets, and the third one concerns the toric $h$-vectors.

In 1964, Klee defined Eulerian and Semi-Eulerian simplicial complexes and proved that their $h$-vectors are almost symmetric, see \cite{MR189039}. More precisely, the $h$-vector of a $(d-1)$-dimensional Eulerian simplicial complex $\Delta$ (for example, a simplicial sphere) satisfies $h_i(\Delta) =h_{d-i}(\Delta)$ for all $i$, while the $h$-vector of a $(d-1)$-dimensional semi-Eulerian complex $\Gamma$ (such as the boundary of a simplicial manifold) satisfies $h_{d-i}(\Gamma) =h_i(\Gamma) + (-1)^{i} \binom{d}{i} \left[\tilde{\chi}(\Gamma)-(-1)^{d-1}\right]$, where $\tilde{\chi}$ is the reduced Euler characteristic of $\Gamma$. Since then these relations have played a very important role in the $f$-vector theory, e.g., in the proof of the Upper Bound Theorem, see \cite{MR166682}, \cite{MR458437}, and \cite{MR1669325}. In 2012, Novik and Swartz derived similar results for {\it psuedo-manifolds with isolated singularities} as defined in \cite{MR2943717}.

A $(d-1)$-dimensional simplicial complex $\Delta$ is balanced if it has a vertex coloring in $d$ colors such that no two vertices in the same face are colored with the same color. (It is standard to label  the colors by elements of $[d]$.) A refinement of the usual $f$- and $h$- vectors for balanced complexes are called flag $f$- and flag $h$-vectors. The flag $f$-vector of a $(d-1)$-dimensional balanced simplicial complex $\Delta$, denoted $\{f_S (\Delta)\}_{S\subseteq [d]}$, counts the number of faces of $\Delta$ according to the color sets of their vertices. The flag $h$-vector of $\Delta$, denoted  $\{h_S (\Delta) \}_{S\subseteq [d]}$, is the image of the flag $f$-vector under a certain invertible linear transformation. Bayer and Billera proved the Dehn--Sommerville relations on flag $f$-vectors of Eulerian balanced simplicial complexes, see \cite{MR774533} (also see \cite[Thm.~3.16.6]{MR2868112} for the proof of the flag $h$-vector version). The Bayer--Billera relations played an instrumental role in Fine's definition of the cd-index (see \cite{MR1073071}, \cite{MR1322068}).

Stanley \cite{MR951205} (see also \cite{MR1322068}) extended Klee's definition of Eulerian and semi-Eulerian complexes to (finite) graded partially ordered sets (posets for short). He also introduced a notion of toric $h$- and $g$-vectors of posets and proved that the toric $h$-vector of any Eulerian poset is symmetric. This result was extended by Swartz \cite{MR2505437} to semi-Eulerian posets.

Here we provide generalizations of these three types of Dehn--Sommerville relations. Our results can be summarized as follows; for all undefined terminology and notations, see Section \ref{sect:prelim}.\begin{itemize}
\item For an arbitrary $(d-1)$-dimensional pure simplicial complex $\Delta$, we express $h_{d-i}(\Delta) - h_i(\Delta)$ in terms of the Euler characteristics of links of faces, see Theorem \ref{DSThm}. The result is also generalized to simplicial posets, see Corollary \ref{cor: DS for simplicial posets}.

\item For an arbitrary $(d-1)$-dimensional balanced
 simplicial complex $\Delta$ and $S\subseteq [d]$, we express $h_S(\Delta) - h_{[d]-S} (\Delta)$ in terms of the Euler characteristics of links of faces whose color sets are contained in $S$, see Theorem \ref{thm: flag DS}.
 
\item For finite posets, we define the notion of {\bf $j$-Singular} posets (see Section \ref{section: j-Sing}) such that
      \subitem $j=-1$ recovers Eulerian posets;
      \subitem $j=0$ recovers semi-Eulerian posets;
      \subitem $j=1$ is analogous to complexes with isolated singularities (see the definition in Section \ref{sect:1-Sing posets}).
      
\item Extending the results of Stanley and Swartz, for a 1-Singular poset $P$ of rank $d+1$, we express $\hat{h}(P,x)- x^d\hat{h}(P,\frac{1}{x})$ in terms of the M\"obius functions of intervals $[s,t]$ in $P$ of length greater than or equal to $d-1$, see Theorem \ref{1IS-Thm}. Here $\hat{h}$ denotes the toric $h$-polynomial.

\item We extend this result further and obtain a similar formula for a $j$-Singular poset $P$ with $j < \floor{\frac{d}{2}}$ (see Thereoms \ref{generalized Thm} and \ref{main gen Thm}).
\end{itemize}

The structure of this paper is as follows: Section \ref{sect:prelim} introduces some basic results and definitions pertaining to simplicial complexes and posets. Section \ref{sect: Simplicial DS} is devoted to establishing the generalization of Dehn--Sommerville relations for pure simplicial complexes and simplicial posets. Section \ref{sect:flag DS rel} proves the flag Dehn--Sommerville relations for balanced simplicial complexes and graded posets. Sections \ref{sect:1-Sing posets} and \ref{section: j-Sing} establish the toric generalizations of Dehn--Sommerville formulas. Our proofs build on methods used by Klee, Stanley, and Swartz.\\


\section{Preliminaries}\label{sect:prelim}

\subsection{Simplicial complexes}\label{subsect: s.c.}
In this section we review some definitions pertaining to simplicial complexes. Let $V$ be a finite set. A \textbf{simplicial complex} $\Delta$ with vertex set $V$ is a collection of subsets of $V$ that is closed under inclusion. We call each element of $\Delta$ a \textbf{face} of $\Delta$, and each face $F\in\Delta$ has a \textbf{dimension} defined by $\dim (F) =|F|-1$. Similarly, the dimension of $\Delta$ is defined by $\dim(\Delta)=\max\{\dim F: F\in\Delta\}$. If all maximal faces of $\Delta$ (with respect to inclusion) have the same dimension, then $\Delta$ is called \textbf{pure}. We denote the collection of faces of $\Delta$ of a specific dimension $i$ by
	\[
		\Delta_i:=\{F\in\Delta: \dim (F) = i\}.
	\]
Lastly, the \textbf{link} of a face $F$ of $\Delta$, denoted $\lk_\Delta F$, is defined by
	\[
	\lk_\Delta F := \{G\in \Delta: F\cup G\in \Delta \text{ and } F\cap G = \emptyset\}.
	\]

Let $\Delta$ be a simplicial complex of dimension $d-1$. The {\bf $f$-vector} of $\Delta$ is defined by $f(\Delta) := (f_{-1}(\Delta), f_0(\Delta), f_1(\Delta), \dots  , f_{d-1}(\Delta))$, where $
f_i(\Delta) = |\Delta_i|$. We further define the {\bf $h$-vector} of $\Delta$ by $h(\Delta):=(h_0(\Delta), h_1(\Delta), \dots , h_d(\Delta))$, with entries determined by the equation
\[\sum_{i=0}^d h_i(\Delta) x^{d-i} =  \sum_{i=0}^d f_{i-1}(\Delta) (x - 1)^{d-i}.\]
For the remainder of this section, we will assume that $\Delta$ is a pure simplicial complex of dimension $d-1$. 

Each simplicial complex $\Delta$ admits a geometric realization $\|\Delta\|$ that contains a geometric $i$-simplex for each $i$-face of $\Delta$. We say that $\Delta$ is a simplicial sphere (manifold, respectively) if $\|\Delta\|$ is homeomorphic to a sphere (manifold, respectively). 

The (reduced) \textbf{Euler characteristic} of $\Delta$ is
	\[
	\tilde{\chi}(\Delta) := \sum_{i=-1}^{d-1}(-1)^if_i(\Delta),
	\]
and by the Euler-Poincar\'e formula, $\tilde{\chi}(\Delta)$ is a topological invariant of $\Delta$, or more precisely, of its geometric realization $\|\Delta\|$. For instance, if $\Gamma$ is an $(i-1)$-dimensional simplicial sphere, then $\tilde{\chi}(\Gamma) = (-1)^{i-1}$.

Given two simplicial complexes $\Delta_1$ and $\Delta_2$ on disjoint vertex sets, their {\bf simplicial join}, $\Delta_1 \ast \Delta_2$, is defined as
\[\Delta_1 \ast \Delta_2 :=  \{F\cup G:\; F\in \Delta_1,\;G\in\Delta_2 \}.  \]
In particular, $\Delta_1 \ast \Delta_2$ is a simplicial complex of dimension $\dim \Delta_1 +\dim \Delta_2+1$.

Central to many classifications of simplicial complexes is the notion of the link of a face having the same Euler characteristic as that of a sphere of the appropriate dimension. To that end, we measure potential failures of this condition by defining an error function $\varepsilon_{\Delta}(F)$ on faces $F$ of a pure $(d-1)$-dimensional simplicial complex $\Delta$ as	
	\[
	\varepsilon_{\Delta}(F):= \tilde{\chi}\left(\lk_\Delta F\right)- (-1)^{d-1-|F|}
	\]
(note that $\dim(\lk_{\Delta} F) = d-1-|F|$, so $(-1)^{d-1-|F|}$ is the same as the reduced Euler characteristic of a sphere of dimension $\dim(\lk_{\Delta} F )$). In addition, we form the set of faces with non-trivial error as
	\[
		\NE(\Delta) :=\{F\in\Delta: \varepsilon(F)\not=0\}.
	\]
We say that $\Delta$ is \textbf{Eulerian} if $\NE(\Delta)=\emptyset$, and that it is \textbf{semi-Eulerian} if $\NE(\Delta)=\{\emptyset\}$. In line with these definitions, we refer to $\NE(\Delta)$ as the \textbf{non-Eulerian} part of $\Delta$.

\begin{example}
	If $\Delta$ is a simplicial sphere, then $\NE(\Delta)=\emptyset$. More generally, if $\Delta$ is a simplicial manifold, then $\NE(\Delta)\subseteq\{\emptyset\}$.
\end{example}

\subsection{Graded posets}\label{subsect: poset}
The notions of being Eulerian and semi-Eulerian can also be defined in the graded poset settings. Throughout this paper, we let every finite graded poset $P$ have unique bottom and top elements $\hat{0}$ and $\hat{1}$. Let $\rho:P\to \mathbb{N}$ be the rank function. The rank of $P$, $\rho(P)$, is defined as $\rho(\hat{1})$.

Let $\mu_P$ denote the M\"obius function of poset $P$. If for all proper intervals $[s,t] \subsetneq P$, $\mu_P(s,t) = (-1)^{\rho(t)-\rho(s)}$, then $P$ is called {\bf semi-Eulerian}. If in addition, $\mu_P(\hat{0},\hat{1})= (-1)^{\rho(P)}$, then $P$ is {\bf Eulerian}.

For each poset $P$, there is a simplicial complex associated with P; it is called the (reduced) {\bf order complex} of $P$ and denoted by $O(P)$, see for instance \cite{MR1373690}. The complex $O(P)$ has the set $P\setminus \{\hat{0}, \hat{1} \}$ as its vertex set, and the (finite) chains in the open interval $(\hat{0}, \hat{1} )$ as its faces. A chain $C=\{t_1<t_2<\cdots<t_k\}$ in $P\setminus \{\hat{0},\hat{1}\}$ of size $k$ corresponds to a face in $O(P)$ of dimension $k-1$; this face is sometimes denoted by $F_C$. In particular, if $\rho(P)=d+1$, then $\dim O(P)=d-1$.

	\begin{rem}[Relations between $\mu$ and $\tilde{\chi}$] For a poset $P$ and a chain $C$ in it, the following formulas hold:
		\begin{align}\label{eqn: chi and mu 1}
		\tilde{\chi}(O(P)) = \mu_P(\hat{0},\hat{1});
		\end{align}
		\begin{align}\label{eqn: chi and mu 2}
		\tilde{\chi}(\lk_{O(P)} F_C) = (-1)^{|F|} \mu_P(\hat{0},t_1)\cdot \mu_P(t_1,t_2)\dots \cdot \mu_P(t_{k-1},t_k)\mu_P(t_k,\hat{1}).
		\end{align}
	\end{rem}
	The first formula is well known, see, for instance, \cite{MR2868112}. To prove the second equality, note that
	\[
	\lk_{O(P)} F = O(\hat{0},t_1) \ast O(t_1,t_2) \dots \ast  O(t_{k-1},t_k)\ast O(t_k,\hat{1}), 
	\]
	where $O(s,t)$ is the order complex of the open interval $(s,t) = \{x:\; s<x<t \}$. Using this and the fact that $\tilde{\chi}(\Delta_1 \ast \Delta_2) = (-1)\tilde{\chi}(\Delta_1) \tilde{\chi}(\Delta_2)$, we obtain
	\begin{align*}
	\tilde{\chi}(\lk_{O(P)} F) = (-1)^{|F|} \tilde{\chi}(O(\hat{0},t_1))\cdot \tilde{\chi}(O(t_1,t_2))\dots \cdot \tilde{\chi}(O(t_{k-1},t_k))\tilde{\chi}(O(t_k,\hat{1})).
	\end{align*}
	This together with (\ref{eqn: chi and mu 1}) implies (\ref{eqn: chi and mu 2}). 
	
	Since each face $F_C\in O(P)$ corresponds to a chain $C\in P \backslash \{\hat{0}, \hat{1} \}$, we can define a {\bf chain error} in $P$ that corresponds to the face error in $O(P)$:
	Let $C = \{t_1<\dots <t_{i}\}$ be a chain in $P\backslash \{\hat{0}, \hat{1} \}$. Define
	\[
	\mu_P(C):= \mu_P(\hat{0},t_1) \mu_P(t_1,t_2)\dots \mu_P(t_{i},\hat{1}), 
	\]
	and
	\[ 
	\varepsilon_P(C):= (-1)^{|C|} [\mu_P(C) - (-1)^{d+1}].
	\]
	We call $C \mapsto \varepsilon_P(C)$ the {\bf error function for chains} in a poset $P$. However this is not a ``new'' error function: comparing it with $\varepsilon_{O(P)}(-)$ and using (\ref{eqn: chi and mu 2}), we obtain:
	
	\begin{rem}\label{rmk: chain error = link error}
		$\varepsilon_P(C) = \varepsilon_{O(P)}(F_C)$.
	\end{rem}

\subsection{Balanced simplicial complexes and flag vectors}
For a specific type of simplicial complexes, called the balanced simplicial complexes, there exists a certain refinement of $f$- and $h$-vectors. A $(d-1)$-dimensional pure simplicial complex $\Delta$ is {\bf balanced} if it is equipped with a vertex coloring $\kappa: V \to [d]$ such that no two vertices in the same face have the same color. These complexes were introduced by Stanley in \cite{MR526314}. 

For any subset $S \subseteq [d]$, the {\bf $S$-rank selected subcomplex of $\Delta$} is 
\[\Delta_S =\{F \in \Delta:\;\kappa(F)\subseteq S\}. \]

Define $f_S(\Delta)$ as the number of faces in $\Delta$ with $\kappa(F) = S$. The numbers $f_S(\Delta)$ are called the flag $f$-numbers of $\Delta$ and the collection $(f_S(\Delta))_{S\subseteq [d]}$ is called the {\bf flag $f$-vector of $\Delta$}.
Similarly, the flag $h$-numbers of $\Delta$ are defined as
\[ 
h_T (\Delta) = \sum_{S\subseteq T} (-1)^{|T|-|S|} f_S(\Delta) \quad \text{for } T \subseteq [d]. 
\]
Note that the flag $f$- and $h$-numbers refine the ordinary $f$- and $h$-numbers:
\[ 
f_{i-1}(\Delta) = \sum_{S\subseteq [d],\; |S|=i} f_S(\Delta)\quad \text{ and } \quad h_j (\Delta) =  \sum_{T\subseteq [d],\; |T|=j} h_T (\Delta). 
\]

One common example of a balanced complex is the order complex of a graded poset: let P be a graded poset of rank $d+1$, then $O(P)$ is balanced w.r.t.~the coloring given by the rank function of $P$. Moreover, the flag vectors can also be defined in the setting of graded posets. For $S\subseteq [d]$, we define
\[
P_{S}=\left\{x\in P:\rho(x)\in S\cup \{0, d+1\} \right\} 
\] 
considered as a subposet of $P$. The poset $P_S$ is called the {\bf S-rank selected subposet} of $P$ and if $\Delta = O(P)$, then $\Delta_S = O(P_S)$. We let $ \alpha _{P}(S)$ be the number of maximal (w.r.t.~inclusion) chains in $P_S$. The function $S\mapsto \alpha _{P}(S)$
is the {\bf flag $f$-vector of P}. We also consider the function
\[
S\mapsto \beta _{P}(S),\quad \beta _{P}(S)=\sum _{T\subseteq S}(-1)^{|S|-|T|}\alpha _{P}(T)
\]
and call it the {\bf flag $h$-vector of P}. It is easy to see that 
\begin{align}
\alpha_P(S) = f_S(O(P)) \quad \text{and } \quad \beta_P(S) =h_S(O(P)).
\end{align}

\subsection{The Stanley-Reisner ring}
An equivalent way to define the $h$- and flag $h$-vectors is through the Stanley-Reisner ring (for more details see \cite[Ch.~II.1]{St-96}). Let $\Delta$ be a $(d-1)$-dimensional simplicial complex with vertex set $V = [n]$. Let $\field$ be a field and let $R = \field[x_1, \dots, x_n]$. The {\bf Stanley-Reisner ring of $\Delta$} is $\field[\Delta] = R/I_{\Delta}$, where
\[I_{\Delta} = \langle x_{i_1} x_{i_2} \dots \cdot  x_{i_k} : \; \{i_1, \dots, i_k \} \notin \Delta  \rangle. \]

Let $\field [\Delta]_i$ be the $i$-th homogeneous component of $\field[\Delta]$. {\bf The Hilbert series of $\field[\Delta]$} is
\begin{align*}
	F(\Delta, \lambda) = \sum_{i=0}^{\infty} \dim_{\field} \field[\Delta]_i \; \lambda^i.
\end{align*}
The $h$-vector of $\Delta$ can be easily obtained from the Hilbert series of $\field[\Delta]$ using the following relation (see, for example, \cite[Ch.~II.2]{St-96}):
\begin{align} 
	F(\Delta, \lambda) =\frac{\sum_{i=0}^{d} h_i(\Delta)\lambda^i} {(1 - \lambda)^d}.
\end{align}

If $\Delta$ is balanced with vertex coloring $\kappa:[n] \to [d]$, then $\field [\Delta]$  has a natural $\ZZ^d
$-grading which is induced by this coloring. For $i = 1,2,\dots, d$, let $e_i \in \{0,1 \}^d$ be the $i$-th coordinate unit vector, and for $j\in [n]$, define $\deg(x_j) = e_{\kappa(j)}$. This gives a $\ZZ^d$ grading of $\field[x_1,\dots, x_n]$. Let $\lambda = (\lambda_1, \dots, \lambda_d)$ be a $d$-tuple of variables. For $a\in\ZZ^d_{\geq 0}$, we let $\lambda^a = \lambda_1^{a_1}\lambda_2^{a_2}\dots \lambda_d^{a_d}$. The {\bf fine Hilbert series} of $\field[\Delta]$ with respect to this $\ZZ^d$ grading is
\[
	F(\Delta, \lambda) = \sum_{a\in \ZZ^d_{\geq 0}} (\dim_{\field} \field[\Delta]_a)\cdot \lambda^a. 
\]
The flag $h$-vectors can be obtained from this fine Hilbert series (see \cite{MR526314}):
\begin{align}  F(\Delta, \lambda) = \frac{1} {\prod_{i=1}^{d}(1 - \lambda_i)} \sum_{S\subseteq [d]} h_S \lambda^S
\end{align}
where $\lambda^S = \prod_{j\in S} \lambda_j$.


\subsection{Toric vectors of graded posets}
We will encounter toric vectors only in Sections \ref{sect:1-Sing posets} and \ref{section: j-Sing}, so the reader may skip this subsection for now and come to it later.

As in Subsection \ref{subsect: poset}, we let $P$ be a finite graded poset with unique bottom and top elements $\hat{0}$ and $\hat{1}$. If $P$ has only one element, i.e., when $\hat{0} =\hat{1}$, then we call $P$ the trivial poset and denote it as $P=\mathbbm{1}$. Let $\rho:P\to \mathbb{N}$ be the rank function. Let $\widetilde{P} = \{[\hat{0},t]:\; t\in P \}$ be the poset of lower intervals of $P$ ordered by inclusion. Define two polynomials $\hat{h}(P,x)$ and $\hat{g}(P,x)$ recursively as follows.
\begin{itemize}
	\item $\hat{h}(\mathbbm{1},x)= \hat{g}(\mathbbm{1},x) =1$.
	\item If $P$ has rank $d+1$, then $\deg \hat{h}(P,x)=d$. We first write \[\hat{h}(P,x) = \hat{h}_d + \hat{h}_{d-1} x + \hat{h}_{d-2} x^2 +\dots +\hat{h}_{0} x^d.\]
	We then define $\hat{g}(P,x)$ as
	\[\hat{g}(P,x) := \hat{h}_d + (\hat{h}_{d-1} -\hat{h}_d)x + (\hat{h}_{d-2} -\hat{h}_{d-1}) x^2 +\dots +(\hat{h}_{d-m}-\hat{h}_{d-m+1})  x^m,  \]
	where $m =\deg \hat{g}(P,x)=\floor{\frac{d}{2}}$.
	\item Finally, for a poset $P$ of rank $d+1$, define
	\[\hat{h}(P,x) :=\sum_{Q\in \widetilde{P}, Q\neq P} \hat{g}(Q,x)(x-1)^{d-\rho(Q)}.  \]
\end{itemize}
The coefficients of these polynomials, arranged as vectors, are called the {\bf toric $h$-vector} and the {\bf toric $g$-vector}, respectively.

\begin{rem}
	We follow the convention of Swartz \cite{MR2505437}, and so our $\hat{h}_i$ is $\hat{h}_{d-i}$ in Stanley's paper \cite{MR951205}.
\end{rem}


\section{Dehn--Sommerville relations}\label{sect: Simplicial DS}
\subsection{Pure Simplicial Complexes}
The main result of this section is the following generalization of Dehn--Sommerville relations (see \cite{MR166682}) to all pure simplicial complexes. We will discuss two proofs of this result; the third one is sketched in Section \ref{sect:flag DS rel} (see Remark \ref{remark: third pf}).

\begin{theorem}\label{DSThm}
	Let $\Delta$ be a pure $(d-1)$-dimensional simplicial complex. Then
		\begin{align}\label{eqn: simplicial DS formula}
			h_{d-j}(\Delta)-h_j(\Delta)=(-1)^j\sum_{F\in \Delta}\binom{d-|F|}{j}\varepsilon_{\Delta}(F)\qquad \text{for }j=0, \ldots, d.
      \end{align}
\end{theorem}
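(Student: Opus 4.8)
The plan is to compute the Hilbert series of $\field[\Delta]$ in two different ways and compare, mimicking the classical derivation of the Dehn--Sommerville relations but tracking the error terms $\varepsilon_\Delta(F)$ that arise when links fail to have sphere-like Euler characteristics. First I would recall the standard face-ring computation $F(\Delta,\lambda) = \sum_{F\in\Delta} \left(\tfrac{\lambda}{1-\lambda}\right)^{|F|}$, which rearranges into $\sum_i h_i(\Delta)\lambda^i/(1-\lambda)^d$; thus reading off $h_j(\Delta)$ amounts to extracting a coefficient after a substitution. The key move is the substitution $\lambda \mapsto 1/\lambda$ (equivalently $\lambda \mapsto \lambda/(\lambda-1)$ inside the face-count form), which for an Eulerian complex sends the $h$-polynomial to its reverse; for a general pure complex it will instead produce the reverse plus a correction indexed over faces $F$, with the correction governed precisely by $\tilde\chi(\lk_\Delta F)$.

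The cleanest way to organize the face-by-face correction is to sum over faces $F$ and, for each $F$, sum over faces $G\in\lk_\Delta F$, using the identity $h$-vector$\leftrightarrow f$-vector locally. Concretely, I would start from
\[
\sum_{j=0}^d h_{d-j}(\Delta)\,x^j \;=\; \sum_{j=0}^d h_j(\Delta)\,x^{d-j} \;\Big|_{\text{after reversing}},
\]
and express the difference $\sum_j (h_{d-j}(\Delta)-h_j(\Delta))x^j$ by writing $h_j(\Delta)=\sum_{i}(-1)^{j-i}\binom{d-i}{d-j}f_{i-1}(\Delta)$, performing the reversal $j\mapsto d-j$ on one copy, and recognizing that the defect collapses, via the binomial identity $\sum_{G}(-1)^{|G|}=(-1)^{?}$ over the link, into $\sum_{F\in\Delta}\binom{d-|F|}{j}\big[\tilde\chi(\lk_\Delta F)-(-1)^{d-1-|F|}\big]$ up to the sign $(-1)^j$. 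The combinatorial heart is the classical Dehn--Sommerville manipulation
\[
\sum_{i}(-1)^{i}\binom{d-i}{j}f_{i-1}(\Delta) \;=\; \sum_{F\in\Delta}(-1)^{|F|}\binom{d-|F|}{j}\sum_{G\in\lk_\Delta F}(-1)^{|G|-1}\cdot(-1),
\]
where the inner alternating sum over the link is $-\tilde\chi(\lk_\Delta F)$ up to the empty-face adjustment; isolating the "sphere part" $(-1)^{d-1-|F|}$ leaves exactly $\varepsilon_\Delta(F)$.

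The main obstacle I anticipate is bookkeeping the signs and binomial coefficients so that the "expected" part (what would appear if every link were a sphere) telescopes exactly into $h_j(\Delta)$ itself — i.e. verifying that the classical Dehn--Sommerville identity is the special case where all $\varepsilon_\Delta(F)=0$ — and simultaneously that the "error" part assembles into the stated closed form with the clean factor $\binom{d-|F|}{j}$ and overall sign $(-1)^j$. A secondary technical point is the interchange of summations $\sum_{i}\sum_{F:\,|F|=i}$ versus $\sum_{F}\sum_{G\in\lk_\Delta F}$, together with the identity $f_{i-1}(\Delta)=\sum_{F\in\Delta,\,|F|=i}1$ and the relation between faces of $\Delta$ containing $F$ and faces of $\lk_\Delta F$; once that is set up carefully, the rest is the binomial identity $\binom{d-|F|}{j}=\sum_{|F|\le i}\binom{d-i}{j}\binom{i-|F|}{?}$-type collapse. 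I would finish by checking the boundary cases $j=0$ (which should recover $h_d(\Delta)-h_0(\Delta)=\sum_F(-1)^{d-|F|}\varepsilon_\Delta(F)=\tilde\chi(\Delta)-(-1)^{d-1}$-type statements, consistent with Klee) and $j=d$, and by confirming that when $\NE(\Delta)=\emptyset$ the formula reduces to the usual $h_{d-j}=h_j$.
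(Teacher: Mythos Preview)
Your overall strategy is sound and, in fact, the paper pursues both threads you describe. The paper's \emph{primary} proof is the direct double-counting route: it starts from the identity
\[
\sum_{F\in\Delta_{i-1}} \tilde{\chi}(\lk_\Delta F)=\sum_{k=i}^d(-1)^{k-i-1}\binom{k}{i}f_{k-1}(\Delta),
\]
obtained by counting pairs $(F,H)$ with $F\subseteq H$, equates it with $(-1)^{d-1-i}f_{i-1}(\Delta)+\sum_{F\in\Delta_{i-1}}\varepsilon_\Delta(F)$, multiplies by $(\lambda-1)^{d-i}$, and sums over $i$; the left side collapses to $\sum h_j\lambda^j$ via the binomial theorem, the right side to $\sum h_j\lambda^{d-j}$ plus the error term. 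Your Hilbert-series reciprocity idea (the substitution $\lambda\mapsto 1/\lambda$ in Stanley's formula expressing $(-1)^dF(\field[\Delta],1/\lambda)$ via $\tilde\chi(\lk F)$) is exactly the third proof the paper alludes to in Remark~\ref{remark: third pf}; the paper carries this out in full only for the \emph{flag} case (Theorem~\ref{thm: flag DS}), using Theorem~\ref{thm: Hilbert series}.

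That said, your displayed ``combinatorial heart'' identity
\[
\sum_{i}(-1)^{i}\binom{d-i}{j}f_{i-1}(\Delta) = \sum_{F\in\Delta}(-1)^{|F|}\binom{d-|F|}{j}\sum_{G\in\lk_\Delta F}(-1)^{|G|-1}\cdot(-1)
\]
is not correct as written: the left side equals $(-1)^{d-j}h_{d-j}(\Delta)$, while the right side equals $-\sum_F(-1)^{|F|}\binom{d-|F|}{j}\tilde\chi(\lk_\Delta F)$, and these are not equal in general. The actual double-counting step (the paper's equation~\eqref{links2}) has the binomial coefficient $\binom{k}{i}$ attached to $f_{k-1}$, not $\binom{d-|F|}{j}$ attached to $\tilde\chi(\lk F)$; the $\binom{d-|F|}{j}$ only appears \emph{after} multiplying by $(\lambda-1)^{d-i}$ and summing over $i$. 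Your anticipated obstacle---sign and binomial bookkeeping---is real, and the proposal as it stands has not yet nailed down the correct identity to start from. If you follow the Hilbert-series route instead, the key input you need is the coarse version of Stanley's reciprocity (the single-variable analogue of the paper's Theorem~\ref{thm: Hilbert series}), after which the argument is a clean coefficient comparison with no ad~hoc binomial collapses.
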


\begin{proof}
	Note that $(-1)^{d-1-i}f_{i-1}(\Delta)-\sum_{F\in\Delta_{i-1}} (-1)^{d-1-|F|} =0$, and hence
		\begin{equation}\label{links1}
			\sum_{F\in\Delta_{i-1}} \tilde{\chi}(\lk_\Delta F)=(-1)^{d-1-i}f_{i-1}(\Delta)+	\sum_{F\in\Delta_{i-1}}\left[ \tilde{\chi}(\lk_\Delta F)-(-1)^{d-1-|F|}\right].
		\end{equation}
	On the other hand, since each $(j-1)$-dimensional face of $\Delta$ contains exactly $\binom{j}{i}$ faces of dimension $i-1$,
		\begin{equation}\label{links2}
			\sum_{F\in\Delta_{i-1}} \tilde{\chi}(\lk_\Delta F)=\sum_{j=i}^d(-1)^{j-i-1}\binom{j}{i}f_{j-1}(\Delta).
		\end{equation}
	
	Setting the right-hand sides of \eqref{links1} and \eqref{links2} equal to each other and multiplying throughout by $(-1)^{d-1-i}$ yields
		\begin{equation}\label{baseeqn1}
			\sum_{j=i}^d(-1)^{d-j}\binom{j}{i}f_{j-1}(\Delta)=f_{i-1}(\Delta)+(-1)^{d-1-i}\sum_{F\in\Delta_{i-1}}\varepsilon_{\Delta}(F).
		\end{equation}

	Now we multiply both sides of \eqref{baseeqn1} by $(\lambda-1)^{d-i}$ and sum the result over $i$:
		\begin{equation}\label{baseeqn2}
			\sum_{i=0}^d\left[\sum_{j=i}^d(-1)^{d-j}\binom{j}{i}f_{j-1}(\Delta)\right](\lambda-1)^{d-i}=\sum_{i=0}^d\left[f_{i-1}(\Delta)+(-1)^{d-1-i}\sum_{\mathclap{F\in \Delta_{i-1}}}\varepsilon_{\Delta}(F)\right](\lambda-1)^{d-i}.
		\end{equation}

	The left hand-side of equation \eqref{baseeqn2} may be rewritten as
		\begin{align*}
			\sum_{j=0}^d(-1)^{d-j}(\lambda-1)^{d-j}f_{j-1}(\Delta)\left(\sum_{i=0}^d\binom{j}{i}(\lambda-1)^{j-i}\right)
			&=\sum_{j=0}^d f_{j-1}(\Delta)(1-\lambda)^{d-j}\lambda^j\\
			&=\sum_{j=0}^d h_j(\Delta)\lambda^j.
		\end{align*}
	
	The right hand-side of equation \eqref{baseeqn2} can be broken up as
		\begin{align*}
			\sum_{i=0}^d\left[f_{i-1}(\Delta)+(-1)^{d-1-i}\sum_{\mathclap{F\in \Delta_{i-1}}}\varepsilon_{\Delta}(F)\right](\lambda-1)^{d-i}&=\sum_{i=0}^df_{i-1}(\Delta)(\lambda-1)^{d-i}\\
			&+\sum_{i=0}^d(-1)^{d-1-i}\left(\sum_{F\in \Delta_{i-1}}\varepsilon_{\Delta}(F)\right)(\lambda-1)^{d-i}.
		\end{align*}
	We will analyze each of these terms on the right independently. Firstly,
		\[
			\sum_{i=0}^d f_{i-1}(\Delta)(\lambda-1)^{d-i}=\sum_{i=0}^d h_i(\Delta)\lambda^{d-i}.
		\]
	For the second term,
		\begin{align*}
			\sum_{i=0}^d (-1)^{d-1-i} & \left(\sum_{F\in \Delta_{i-1}}  \varepsilon_{\Delta}(F)\right) (\lambda-1)^{d-i}\\
			&=\sum_{i=0}^d(-1)^{d-1-i}\left(\sum_{F\in \Delta_{i-1}}\varepsilon_{\Delta}(F)\right)\left(\sum_{j=0}^{d-i}(-1)^{d-i-j}\binom{d-i}{j}\lambda^j\right)\\
			&=\sum_{i=0}^d\left(\sum_{F\in \Delta_{i-1}}\varepsilon_{\Delta}(F)\right)\left(\sum_{j=0}^{d-i}(-1)^{j-1}\binom{d-i}{j}\lambda^j\right)\\
			&=\sum_{j=0}^d(-1)^{j-1}\left(\sum_{i=0}^d\binom{d-i}{j}\left(\sum_{F\in \Delta_{i-1}}\varepsilon_{\Delta}(F)\right)\right)\lambda^j.
		\end{align*}
	
	By equating coefficients in equation \eqref{baseeqn2} we obtain
		\[
			h_{d-j}(\Delta)-h_j(\Delta)=(-1)^j\left(\sum_{i=0}^d\binom{d-i}{j}\sum_{F\in \Delta_{i-1}}\varepsilon_{\Delta}(F)\right),
		\]
	and the summations on the right may be re-written as in the statement of the theorem.
\end{proof}

Since $\varepsilon_{\Delta}(F)=0$ unless $F\in\NE(\Delta)$, we have the following corollary that phrases the relationship between $h_j(\Delta)$ and $h_{d-j}(\Delta)$ in terms of the non-Eulerian part of $\Delta.$

\begin{corollary}
	Let $\Delta$ be a pure $(d-1)$-dimensional simplicial complex. Then	
		\[
			h_{d-j}(\Delta)-h_j(\Delta)=(-1)^j\sum_{F\in \NE(\Delta)}\binom{d-|F|}{j}\varepsilon_{\Delta}(F).
		\]
\end{corollary}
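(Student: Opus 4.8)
The corollary is an essentially immediate consequence of Theorem~\ref{DSThm}, so the plan is simply to invoke the theorem and then prune the sum on the right-hand side. First I would recall the statement of Theorem~\ref{DSThm}, namely that for every $j = 0, \ldots, d$,
\[
    h_{d-j}(\Delta) - h_j(\Delta) = (-1)^j \sum_{F \in \Delta} \binom{d-|F|}{j} \varepsilon_{\Delta}(F).
\]
The summation ranges over all faces $F \in \Delta$ (including $F = \emptyset$), but the key observation is that the summand is identically zero whenever $\varepsilon_{\Delta}(F) = 0$, since $\binom{d-|F|}{j} \varepsilon_{\Delta}(F) = 0$ in that case regardless of the value of the binomial coefficient.

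Next I would recall the definition of the non-Eulerian part: $\NE(\Delta) = \{F \in \Delta : \varepsilon_{\Delta}(F) \neq 0\}$. Therefore, splitting the sum $\sum_{F \in \Delta}$ into the two pieces $\sum_{F \in \NE(\Delta)}$ and $\sum_{F \in \Delta \setminus \NE(\Delta)}$, the second piece contributes nothing because $\varepsilon_{\Delta}(F) = 0$ for every $F \notin \NE(\Delta)$. This yields
\[
    h_{d-j}(\Delta) - h_j(\Delta) = (-1)^j \sum_{F \in \NE(\Delta)} \binom{d-|F|}{j} \varepsilon_{\Delta}(F),
\]
which is exactly the claimed identity.

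There is no real obstacle here; the only thing to be slightly careful about is the edge case where $\NE(\Delta)$ is empty (i.e., $\Delta$ is Eulerian), in which case the right-hand side is an empty sum equal to $0$ and the corollary correctly reduces to the classical Dehn--Sommerville relations $h_{d-j}(\Delta) = h_j(\Delta)$. One might also remark, for the reader's benefit, that since $\binom{d-|F|}{j} = 0$ whenever $|F| > d - j$, the sum can be further restricted to faces of dimension at most $d - j - 1$, though this refinement is not needed for the statement as given.
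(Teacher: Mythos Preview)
Your proposal is correct and matches the paper's own justification essentially verbatim: the paper states the corollary immediately after Theorem~\ref{DSThm} with the single remark that $\varepsilon_{\Delta}(F)=0$ unless $F\in\NE(\Delta)$, which is exactly the pruning argument you give.
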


\begin{example}
	When $\Delta$ is semi-Eulerian (so that $\NE(\Delta)=\{\emptyset\}$),
		\[
			\sum_{F\in\NE(\Delta)}\binom{d-|F|}{j}\varepsilon_{\Delta}(F)=\binom{d}{j}\left[\tilde{\chi}(\Delta)-(-1)^{d-1}\right]
		\]
	and
		\[
			h_{d-j}(\Delta)-h_j(\Delta)=(-1)^j\binom{d}{j}\left[\tilde{\chi}(\Delta)-(-1)^{d-1}\right].
		\]
	Thus, in this case Theorem \ref{DSThm} reduces to Klee's Dehn--Sommerville equations in \cite{MR166682}.
\end{example}

\begin{example}
	As for $h$-vectors of complexes with $\NE(\Delta)$ containing faces of dimension larger than $-1$, consider the case in which $\lVert \Delta \rVert=\mathbb{S}^1*\lVert M\rVert$, where  $\mathbb{S}^1$ denotes the $1$-dimensional sphere and $M$ is some $(d-3)$-dimensional simplicial manifold with $\varepsilon_{\Delta}(\emptyset) = \tilde{\chi}(\Delta) - (-1)^{d-1}\not=0$. Then $\NE(\Delta)$ forms a cycle (in the graph theory sense), say of length $n$, and
	\footnotesize
		\begin{align*}
			\sum_{i=0}^d & \binom{d-i}{j}\left(\sum_{F\in \NE(\Delta)_{i-1}}\varepsilon_{\Delta}(F)\right)\\
			&=\binom{d}{j}\left[\tilde{\chi}(M)-(-1)^{d-1}\right]+n\binom{d-1}{j}\left[-\tilde{\chi}(M)-(-1)^{d-2}\right]+n\binom{d-2}{j}\left[\tilde{\chi}(M)-(-1)^{d-3}\right]\\
			&=\left(\tilde{\chi}(M)+(-1)^d\right)\left[\binom{d}{j}-n\binom{d-2}{j-1}\right],
		\end{align*}
	\normalsize
	and so
		\begin{align}\label{eq: example }
			h_{d-j}(\Delta)-h_{j}(\Delta)=(-1)^j\left(\tilde{\chi}(M)+(-1)^d\right)\left[\binom{d}{j}-n\binom{d-2}{j-1}\right]
		\end{align}
	for $j=0, \ldots, d$. In particular, if $M$ is a triangulation of the torus, then $d=5$ and $\tilde{\chi}(M)=-1$, and so
		\[
			h_{5-j}(\Delta)-h_j(\Delta)=(-1)^j(-2)\left[\binom{5}{j}-n\binom{3}{j-1}\right].
		\]
\end{example}

\begin{rem}
	One consequence of (\ref{eq: example }) is that, with $\tilde{\chi}(M)$ known, the exact number of non-Eulerian edges in any triangulation $\Delta$ of $\mathbb{S}^1*M$ is determined by just the face numbers  $f_i(\Delta)$ up to a dimension about $\frac{d}{2}$. 
\end{rem}

The following is an alternative proof of Theorem \ref{DSThm}. This proof uses {\it short $h$-numbers}, and we define them as follows.

For $0\leq i\leq d-1$, the {\bf $i$-th short $h$-number} (defined by Hersh and Novik) is 
	\[
		h^*_i(\Delta) = \sum_{v\in V(\Delta)} h_i(\lk_\Delta v).  
	\]
These numbers go back to McMullen's proof of the Upper Bound Theorem (see \cite{MR283691}), but were  formalized by Hersh and Novik in \cite{MR1923232}. The following formula that connects the short $h$-numbers to the usual ones was verified by McMullen for simplicial polytopes (see \cite[pg.~183]{MR283691}) and by Swartz for pure simplicial complexes (see \cite[Prop. 2.3]{MR2134424}):
	 \begin{align}\label{eq: short h and h}
		h^*_{i-1} = i h_i + (d-i+1) h_{i-1} \quad \text{ for all } 1\leq i\leq  d.
	\end{align}
	
\begin{proof}[Another Proof of Theorem \ref{DSThm}]
	We will prove (\ref{eqn: simplicial DS formula}) by double induction: first we induct on the dimension of $\Delta$, and for each $(d-1)$-dimensional simplicial complex we induct on  $i\geq 0$ (using the validity of the statement for $h_{d-i+1}-h_{i-1}$ to derive its validity for $h_{d-i}-h_i$). 
	
	If $\dim(\Delta)=0$, it is easy to check that (\ref{eqn: simplicial DS formula}) holds.
	
	If $\dim(\Delta)=d-1$ for $d>1$ and $\Delta$ is pure, then $\lk_\Delta v$ is a pure $(d-2)$-dimensional simplicial complex (for any vertex $v$). By the inductive hypothesis, 
	\begin{align}
		h_{d-i-1}(\lk_\Delta v) - h_{i}(\lk_\Delta v)  = (-1)^i \sum_{F\in \lk(v)}{d-1-|F| \choose i }\epsilon_{\lk(v)}(F),
	\end{align}
	and by summing over $v\in V(\Delta)$ on both sides, we obtain that \begin{align}\label{eq: short h DS}
		h^*_{d-i-1}(\Delta) - h^*_{i}(\Delta)  = (-1)^i \sum_{v\in V(\Delta)} \sum_{F\in \lk(v)}{d-1-|F| \choose i }\varepsilon_{\lk(v)}(F).
	\end{align}
	
Assume that (\ref{eqn: simplicial DS formula}) of Theorem \ref{DSThm} holds for all $d'<d$. The base case of the induction on $i$ is when $i=0$. By the definition of $h$-vectors and Euler's formula,
	\begin{align}\label{eqn: LHS of mormula}
		h_d(\Delta) -h_0(\Delta) =(-1)^{d-1}\tilde{\chi}(\Delta) -1.
	\end{align}
On the other hand, the expression on the right-hand side of (\ref{eqn: simplicial DS formula}) can be rewritten as
	\begin{align*}
		&(-1)^0 \sum_{F\in \Delta}{d-1-|F| \choose 0}\left[ \tilde{\chi}(\lk_{\Delta} F) -(-1)^{d-1-|F|} \right] \\ =&\sum_{i=0}^{d}\sum_{F\in \Delta_{i-1}}\tilde{\chi}(\lk_{\Delta} F) -\sum_{i=0}^{d}\sum_{F\in \Delta_{i-1}}(-1)^{d-1-i} \\
		=&\sum_{i=0}^{d}\sum_{j=i}^{d}(-1)^{j-i-1}{j \choose i} f_{j-1}(\Delta) -\sum_{i=0}^{d}(-1)^{d-1-i} f_{i-1}(\Delta)\\
		=&-\sum_{j=0}^{d} f_{j-1}(\Delta) \left[\sum_{i=0}^{j}(-1)^{j-i} {j \choose i} \right]-\sum_{i=0}^{d}(-1)^{d-1-i} f_{i-1}(\Delta)\\
		\overset{(\bowtie)}{=}&-1+(-1)^{d-1} \sum_{i=0}^{d}(-1)^{i-1} f_{i-1}(\Delta)\\
		=&-1+(-1)^{d-1}\tilde{\chi}(\Delta),\\
		\overset{(\ref{eqn: LHS of mormula})}{=}& h_d(\Delta)-h_0(\Delta)
	\end{align*}
where the equality ($\bowtie$) follows from the following (well-known) binomial identity.
	\[
		\sum_{i=0}^{j}(-1)^{j-i} {j \choose i}  = 
			\begin{cases} 
					1,& \text{if } j =0,\\
					0,& \text{if } j >0.
			\end{cases}
	\]
This completes the proof of the $i=0$ case.
	
Let $i>0$ and assume $h_{d-i+1 }(\Delta) - h_{i-1}(\Delta)$ satisfies the formula in the statement of the theorem. Then by (\ref{eq: short h DS}) and (\ref{eq: short h and h}), we have
	\footnotesize
	\begin{align*}
		i(h_{d-i} - h_i) &\stackrel{(\ref{eq: short h and h})}{=} (-1)(d-i+1)[h_{d-i+1} - h_{i-1}] + (h^*_{d-i} - h^*_{i-1})\\
		&\stackrel{\text{ind. hyp.}}{=} (-1)(d-i+1)\left[(-1)^{i-1} \sum_{F\in \Delta} {d-|F| \choose i-1  } \varepsilon_{\Delta}(F) \right] + (h^*_{d-i} - h^*_{i-1})\\
		&\stackrel{(\ref{eq: short h DS})}{=}(-1)(d-i+1)\left[(-1)^{i-1} \sum_{F\in \Delta} {d-|F| \choose i-1  } \varepsilon_{\Delta}(F) \right] \\
		&\quad\quad + (-1)^{i-1}\bigg(\sum_{v\in V(\Delta)} \sum_{F\in \lk_\Delta v} { d-1 - |F| \choose i-1} \varepsilon_{\lk (v)} (F) \bigg).
	\end{align*}
	\normalsize
Therefore
	\begin{align}\label{eq: messy formula}
		h_{d-i}(\Delta) - h_i(\Delta) &= (-1)^i\frac{d-i+1}{i}\sum_{F\in \Delta} {d-|F| \choose i-1  } \varepsilon_{\Delta}(F) \nonumber\\
		& \quad+ \frac{(-1)^{i-1}}{i}\sum_{v\in V(\Delta)}\sum_{F\in \lk_\Delta v} { d-1 - |F| \choose i-1} \varepsilon_{\lk (v)} (F).
	\end{align}
To show (\ref{eq: messy formula}) equals the right-hand side of (\ref{eqn: simplicial DS formula}), we need to show:
	\footnotesize 
	\begin{align}\label{eq: need to show}
		\frac{d-i+1}{i}\sum_{F\in \Delta} {d-|F| \choose i-1  } \varepsilon_{\Delta}(F)  - \sum_{v\in V(\Delta)}\sum_{F\in \lk_\Delta v} \frac{1}{i}{ d-1 - |F| \choose i-1} \varepsilon_{\lk (v)} (F) = \sum_{F\in\Delta} {d-|F| \choose i} \varepsilon_\Delta(F). 
	\end{align}
\normalsize
Notice that for each $v\in V(\Delta)$ and $F\in \lk_{\Delta}(v)$, $\lk_{\lk(v)} (F)  =  \lk_{\Delta} (F\cup v)$, therefore $\varepsilon_{\lk(v)} F = \varepsilon_\Delta (F \cup v)$, and so
	\[
	\sum_{v\in V(\Delta)}\sum_{F\in \lk_\Delta v}\frac{1}{i} { d-1 - |F| \choose i-1} \varepsilon_{\lk (v)} (F) = \sum_{G\in \Delta}|G| \cdot \frac{1}{i} { d- |G| \choose i-1} \varepsilon_{\Delta} (G). 
	\]
Pluging this expression into the left-hand side of (\ref{eq: need to show}), we obtain that the left-hand side of (\ref{eq: need to show}) can be rewritten as
	\begin{align*}
		 &\frac{d-i+1}{i}\sum_{F\in \Delta} {d-|F| \choose i-1  } \varepsilon_{\Delta}(F)  - \sum_{F\in \Delta}|F| \cdot \frac{1}{i} { d- |F| \choose i-1} \varepsilon_{\Delta} (F) \\
		 = &\sum_{F\in \Delta} \left[\frac{d-i+1}{i}  -   \frac{|F|}{i} \right]{ d- |F| \choose i-1} \varepsilon_{\Delta} (F)\\
		= & \sum_{F\in \Delta} {d-|F| \choose i} \varepsilon_{\Delta} (F),
	\end{align*}
and so (\ref{eq: need to show}) does hold. This completes the proof of the theorem.\end{proof}

\subsection{Simplicial posets}
	In this subsection we will show that Theorem \ref{DSThm} can be generalized to {\bf simplicial posets} (for more details see \cite{MR1117642}). A graded poset $P$ (with the unique bottom and top elements $\hat{0}$ and $\hat{1}$) is {\bf simplicial} if every proper lower interval $[\hat{0}, t]$ is a Boolean lattice. Given a simplicial complex $\Delta$, the poset of faces of $\Delta$ ordered by inclusion is a simplicial poset. (It is called the {\bf face lattice of $\Delta$}.) Therefore simplicial posets are generalizations of simplicial complexes. Many notions and structures on simplicial complexes can be generalized to simplicial posets.
	
	Given a simplicial poset $P$ of rank $d+1$, for $-1 \leq i\leq d-1$, define $f_i= f_i(P)$ as the number of elements in $P$ with rank $i+1$. The vector $f(P) = (f_{-1}, f_0, f_1,\dots , f_{d-1})$ is called the {\bf $f$-vector of $P$}. Similar to the definition of $h$-vectors in Subsection \ref{subsect: s.c.},  we can define $h_0,h_1,\dots ,h_d$ by
		\[ 
			\sum_{i=0}^d h_{i} x^{d-i} =\sum_{i=0}^d f_{i-1} (x-1)^{d-i}. 
		 \]
	The vector $h(P) = (h_0,h_1,\dots , h_d)$ is the {\bf $h$-vector of $P$}. When $P$ is the face lattice of a simplicial complex $\Delta$, then $f(P) = f(\Delta)$ and $h(P) = h(\Delta)$.
	
	The notion of links can also be generalized from simplicial complexes to simplicial posets. Let $P$ be a simplicial poset and $t\in P$, the {\bf link of $t$ in $P$} is simply the upper interval $[t,\hat{1}]$. It is easy to see that $[t,\hat{1}]$ is also a simplicial poset.
	
	With these notions in hand, both proofs of Theorem \ref{DSThm} can be easily adapted to the more general setting of simplicial posets and result in the following corollary:
\begin{corollary}\label{cor: DS for simplicial posets}
	Let $P$ be a graded simplicial poset of rank $d+1$. Then
		\[
			h_{d-j}(P) - h_j(P) = (-1)^j \sum_{t\in P} {d- \rho(t) \choose j} \bigg[\mu_P(t,\hat{1}) - (-1)^{d-1-\rho(t)} \bigg]\quad \text{for } j =0,\dots,d.
		\]
\end{corollary}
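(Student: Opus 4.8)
The plan is to transport the first proof of Theorem~\ref{DSThm} to the simplicial poset setting essentially verbatim, once the right dictionary between faces and poset elements is pinned down. For $t\in P$ set $\varepsilon_P(t):=\mu_P(t,\hat{1})-(-1)^{d-1-\rho(t)}$; this is exactly the quantity appearing on the right-hand side of the corollary, and it is the simplicial-poset analogue of $\varepsilon_\Delta(F)$. In the translation, the set $\Delta_{i-1}$ of $(i-1)$-faces is replaced by the set of rank-$i$ elements of $P$ (of which there are $f_{i-1}(P)$), the cardinality $|F|$ is replaced by $\rho(t)$, the link $\lk_\Delta F$ is replaced by the upper interval $[t,\hat{1}]$, and---crucially---the reduced Euler characteristic $\tilde\chi(\lk_\Delta F)$ is replaced by $\mu_P(t,\hat{1})$.

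The simplicial hypothesis enters through exactly two elementary facts, and these are the only places where the structure of $P$ is used. First, for any $\hat{0}\le s\le u<\hat{1}$ the interval $[s,u]$ sits inside the Boolean lattice $[\hat{0},u]$, so $\mu_P(s,u)=(-1)^{\rho(u)-\rho(s)}$; combined with $\mu_P(s,\hat{1})=-\sum_{s\le u<\hat{1}}\mu_P(s,u)$ and with the fact that each rank-$j$ element of $P$ lies above exactly $\binom{j}{i}$ rank-$i$ elements (again because lower intervals are Boolean), this yields the poset analogue of~\eqref{links2}: whenever $\rho(s)=i$ one has $\sum_{\rho(t)=i}\mu_P(t,\hat{1})=\sum_{j=i}^d(-1)^{j-i-1}\binom{j}{i}f_{j-1}(P)$. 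Second, the trivial identity $(-1)^{d-1-i}f_{i-1}(P)=\sum_{\rho(t)=i}(-1)^{d-1-\rho(t)}$ supplies the analogue of~\eqref{links1}. Together these give the exact counterpart of~\eqref{baseeqn1}:
\[
\sum_{j=i}^d(-1)^{d-j}\binom{j}{i}f_{j-1}(P)=f_{i-1}(P)+(-1)^{d-1-i}\sum_{\rho(t)=i}\varepsilon_P(t).
\]

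From this point the argument is purely formal: multiply by $(\lambda-1)^{d-i}$, sum over $i$, rewrite the left side via $\sum_i\binom{j}{i}(\lambda-1)^{j-i}=\lambda^j$ to obtain $\sum_j h_j(P)\lambda^j$, expand $(\lambda-1)^{d-i}$ on the right, and equate coefficients of $\lambda^j$; this is word-for-word the manipulation following~\eqref{baseeqn2}, and after re-indexing the error sum by $\rho(t)$ (noting the $t=\hat{1}$ term contributes $0$) one lands on the claimed formula. Alternatively, one can mimic the second proof: the short $h$-number identity~\eqref{eq: short h and h} holds for simplicial posets by the same local Boolean count, the links $[t,\hat{1}]$ are themselves simplicial posets with $\varepsilon_{[v,\hat{1}]}(t)=\varepsilon_P(t)$ whenever $v$ is an atom below $t$, and the double induction on rank then runs exactly as in the derivation of~\eqref{eq: short h DS} and its consequences. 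In either route nothing is genuinely difficult; the only point requiring care---and hence the main obstacle---is confirming that $\mu_P(t,\hat{1})$ is the faithful replacement for $\tilde\chi(\lk_\Delta F)$ and that the Boolean structure of lower intervals really does deliver the binomial counts, i.e.\ checking that the dictionary is sound. Everything else is a transcription of the two proofs already given.
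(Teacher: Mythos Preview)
Your proposal is correct and takes precisely the approach the paper indicates: the paper's own ``proof'' of this corollary is simply the remark that both proofs of Theorem~\ref{DSThm} adapt to simplicial posets, and you have carried out that adaptation explicitly, correctly identifying the dictionary (links $\leftrightarrow$ upper intervals, $\tilde\chi\leftrightarrow\mu$, Boolean lower intervals supplying the binomial counts). Your verification that $\mu_P(t,\hat 1)=\sum_{j\ge i}(-1)^{j-i-1}\binom{j}{i}f_{j-1}(P)$ via the recursive definition of $\mu$ and the Boolean structure of $[\hat 0,u]$ is exactly the point the paper glosses over, and the remainder is indeed a transcription.
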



\section{Flag Dehn--Sommerville relations}\label{sect:flag DS rel}

The goal of this section is to generalize the Bayer--Billera theorem \cite{MR774533} on flag $h$-vectors of Eulerian balanced simplicial complexes (see also \cite[Cor. 3.16.6]{MR2868112} for the poset version). This result states that if $\Delta$ is an Eulerian balanced simplicial complex of dimension $d-1$, then for all $S\subseteq [d]$, $h_S(\Delta) = h_{[d]-S} (\Delta)$. Recall that the error at face $F\in \Delta$ is defined as
\[\varepsilon_{\Delta}(F) = \tilde{\chi}(
\lk_\Delta F) - (-1)^{d-1-|F|}. \]

The main result of this section is the following. We will provide two proofs.
\begin{theorem}\label{thm: flag DS}
	Let $\Delta$ be a $(d-1)$-dimensional balanced simplicial complex with the coloring map $\kappa:V(\Delta)\to [d]$. Let $S\subseteq[d]$. Then
	\begin{align}\label{eq: flag DS}
	h_S(\Delta) - h_{S^c}(\Delta) = (-1)^{d-|S|} \sum_{F\in\Delta_S}\varepsilon_{\Delta}(F).
	\end{align}
\end{theorem}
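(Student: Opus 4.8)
The plan is to derive \eqref{eq: flag DS} from two elementary identities and then chase signs. The first identity records each flag $h$-number topologically: for every balanced $\Delta$ and every $S\subseteq[d]$,
\[
h_S(\Delta) = (-1)^{|S|-1}\,\tilde{\chi}(\Delta_S).
\]
The second identity relates the links of faces of $\Delta_S$ to the complementary rank-selection:
\[
\sum_{F\in\Delta_S}\tilde{\chi}(\lk_\Delta F) = \tilde{\chi}(\Delta_{S^c}).
\]
Granting both, I would expand $\varepsilon_{\Delta}(F)=\tilde{\chi}(\lk_\Delta F)-(-1)^{d-1-|F|}$, sum over $F\in\Delta_S$, substitute, and read off the result.

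For the first identity I would start from the definition $h_S(\Delta)=\sum_{T\subseteq S}(-1)^{|S|-|T|}f_T(\Delta)$ together with $f_T(\Delta)=\#\{F\in\Delta:\kappa(F)=T\}$; interchanging the two summations yields $h_S(\Delta)=\sum_{F\in\Delta,\ \kappa(F)\subseteq S}(-1)^{|S|-|\kappa(F)|}$. Balancedness makes $\kappa$ injective on each face, so $|\kappa(F)|=|F|$ and $\{F:\kappa(F)\subseteq S\}=\Delta_S$, whence $h_S(\Delta)=(-1)^{|S|}\sum_{F\in\Delta_S}(-1)^{|F|}$. Since $\Delta_S$ is a genuine subcomplex containing the empty face, $\sum_{F\in\Delta_S}(-1)^{|F|}=\sum_{F\in\Delta_S}(-1)^{\dim F+1}=-\tilde{\chi}(\Delta_S)$, giving the claim.

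The second identity is the step I expect to carry the real content. I would expand $\tilde{\chi}(\lk_\Delta F)=\sum_{G\in\lk_\Delta F}(-1)^{|G|-1}$, substitute, and re-index the resulting double sum by the face $H:=F\sqcup G\in\Delta$. For a fixed $H$, the pairs $(F,G)$ with $F\sqcup G=H$, $F\cap G=\emptyset$, $F\in\Delta_S$ correspond exactly to the subsets $F$ of $H$ all of whose vertices are colored from $S$ — that is, to the subsets of $H\cap\kappa^{-1}(S)$ — with $G=H\setminus F$, so $|G|=|H|-|F|$. Hence the double sum becomes $\sum_{H\in\Delta}(-1)^{|H|-1}\sum_{F\subseteq H\cap\kappa^{-1}(S)}(-1)^{|F|}$, and the inner alternating sum equals $(1-1)^{|H\cap\kappa^{-1}(S)|}$, which vanishes unless $H\cap\kappa^{-1}(S)=\emptyset$ — i.e.\ unless $H\in\Delta_{S^c}$ — in which case it equals $1$. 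The total is thus $\sum_{H\in\Delta_{S^c}}(-1)^{|H|-1}=\tilde{\chi}(\Delta_{S^c})$. (Notably this step uses only the coloring, not purity of $\Delta$.)

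Finally I would assemble: $\sum_{F\in\Delta_S}\varepsilon_{\Delta}(F)=\sum_{F\in\Delta_S}\tilde{\chi}(\lk_\Delta F)-(-1)^{d-1}\sum_{F\in\Delta_S}(-1)^{|F|}=\tilde{\chi}(\Delta_{S^c})-(-1)^{d}\tilde{\chi}(\Delta_S)$, using the second identity and the computation from the first. Multiplying by $(-1)^{d-|S|}$ and invoking the first identity for both $S$ and $S^c$ (so $\tilde{\chi}(\Delta_{S^c})=(-1)^{|S^c|-1}h_{S^c}=(-1)^{d-|S|-1}h_{S^c}$ and $\tilde{\chi}(\Delta_S)=(-1)^{|S|-1}h_S$), the two terms collapse to $-h_{S^c}$ and $+h_S$, which is exactly \eqref{eq: flag DS}. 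The only obstacle is essentially the bookkeeping of parities; the single idea needed is the re-indexing in the previous paragraph. As sanity checks, when $\Delta$ is Eulerian every $\varepsilon_{\Delta}(F)$ vanishes and we recover the Bayer--Billera relations, and summing \eqref{eq: flag DS} over all $S$ of a fixed cardinality recovers Theorem~\ref{DSThm} (this is the third proof alluded to in Remark~\ref{remark: third pf}). An alternative, likely messier, route would apply Theorem~\ref{DSThm} directly to the pure subcomplex $\Delta_S$ and combine it with $h_i(\Delta_S)=\sum_{T\subseteq S,\,|T|=i}h_T(\Delta)$, $\lk_{\Delta_S}F=(\lk_\Delta F)_{S\setminus\kappa(F)}$, and Möbius inversion over subsets of $S$.
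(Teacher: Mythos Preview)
Your proof is correct and genuinely different from both proofs in the paper. The paper's first proof proceeds by a double induction (on $\dim\Delta$ and on $|S|$), with the inductive step driven by the ``short flag'' identity $\sum_{\kappa(v)=i}h_S(\lk_\Delta v)=h_{S\cup\{i\}}(\Delta)+h_S(\Delta)$; the second proof appeals to Stanley's reciprocity formula for the $\ZZ^d$-graded Hilbert series of $\field[\Delta]$. You bypass both: your argument is a direct, non-inductive double count hinging on the classical fact $h_S(\Delta)=(-1)^{|S|-1}\tilde\chi(\Delta_S)$ together with the elegant cancellation identity $\sum_{F\in\Delta_S}\tilde\chi(\lk_\Delta F)=\tilde\chi(\Delta_{S^c})$. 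This is shorter and more elementary than either approach in the paper, and, as you note, it does not use purity anywhere (only a fixed $d$ entering through the definition of $\varepsilon_\Delta$). One small inaccuracy in your commentary: Remark~\ref{remark: third pf} in the paper does not refer to the route ``sum \eqref{eq: flag DS} over all $S$ of fixed size''; it refers to rerunning the Hilbert-series argument with the coarse grading to reprove Theorem~\ref{DSThm}. The summation observation you mention is correct and appears separately in the paper, but it is not what that remark is pointing to.
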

Our first proof will rely on the following proposition.

\begin{proposition}\label{prop: short flag vector?} Let $\Delta$ be a $(d-1)$-dimensional balanced simplicial complex with the coloring map $\kappa:V(\Delta)\to [d]$. Let $S$ be a subset of $[d]$ and $i\notin S$, then
	\begin{align}
	\sum_{v:\; \kappa(v) = i} h_S(\lk_\Delta (v))  = h_{S\cup \{i \}}(\Delta) + h_S (\Delta) . 
	\end{align}
\end{proposition}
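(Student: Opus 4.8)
The plan is to prove Proposition~\ref{prop: short flag vector?} by a direct calculation with flag $h$-numbers, reducing everything to counting faces of $\Delta$ according to their color sets. Recall that for a balanced complex $\Delta$ and $T\subseteq[d]$ we have $h_T(\Delta)=\sum_{R\subseteq T}(-1)^{|T|-|R|}f_R(\Delta)$, where $f_R(\Delta)$ is the number of faces $F\in\Delta$ with $\kappa(F)=R$. So the entire identity will follow once I express each side in terms of the $f_R(\Delta)$'s and match coefficients.

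First I would analyze the left-hand side. For a vertex $v$ with $\kappa(v)=i$, the link $\lk_\Delta v$ is a balanced complex on the color set $[d]\setminus\{i\}$, and its faces are exactly the $G$ with $G\cup\{v\}\in\Delta$ and $v\notin G$; under the coloring $\kappa$ these are colored by subsets of $[d]\setminus\{i\}$. Thus for $R\subseteq [d]\setminus\{i\}$,
\[
\sum_{v:\,\kappa(v)=i} f_R(\lk_\Delta v)=\#\{F\in\Delta:\ i\in\kappa(F),\ \kappa(F)=R\cup\{i\}\}=f_{R\cup\{i\}}(\Delta),
\]
since every face $F$ with $i\in\kappa(F)$ contains exactly one vertex of color $i$. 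Consequently $\sum_{v:\,\kappa(v)=i}h_S(\lk_\Delta v)=\sum_{R\subseteq S}(-1)^{|S|-|R|}f_{R\cup\{i\}}(\Delta)$, because $i\notin S$ means every $R\subseteq S$ is a subset of $[d]\setminus\{i\}$ and $|S\cup\{i\}|-|R\cup\{i\}|=|S|-|R|$.

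Next I would expand the right-hand side. We have $h_{S\cup\{i\}}(\Delta)=\sum_{R\subseteq S\cup\{i\}}(-1)^{|S|+1-|R|}f_R(\Delta)$. Splitting the sum over $R$ according to whether $i\in R$: the terms with $i\in R$ contribute $\sum_{R'\subseteq S}(-1)^{|S|-|R'|}f_{R'\cup\{i\}}(\Delta)$ (writing $R=R'\cup\{i\}$), and the terms with $i\notin R$ contribute $\sum_{R\subseteq S}(-1)^{|S|+1-|R|}f_R(\Delta)=-h_S(\Delta)$. Hence $h_{S\cup\{i\}}(\Delta)=\sum_{R'\subseteq S}(-1)^{|S|-|R'|}f_{R'\cup\{i\}}(\Delta)-h_S(\Delta)$, so $h_{S\cup\{i\}}(\Delta)+h_S(\Delta)=\sum_{R'\subseteq S}(-1)^{|S|-|R'|}f_{R'\cup\{i\}}(\Delta)$, which matches the expression obtained for the left-hand side. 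This completes the proof.

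There is no serious obstacle here; the only thing to be careful about is the bookkeeping of sign exponents $|S\cup\{i\}|-|R|$ versus $|S|-|R'|$ and the fact that the color-$i$ vertex in a face with $i\in\kappa(F)$ is unique (so the map $v\mapsto$ faces of $\lk_\Delta v$ partitions the faces $F$ with $i\in\kappa(F)$ without overcounting). One could equally phrase this via the fine Hilbert series $F(\Delta,\lambda)=\frac{1}{\prod_i(1-\lambda_i)}\sum_{T}h_T\lambda^T$ and the decomposition of $\field[\Delta]$ coming from the star of color-$i$ vertices, but the elementary face-counting argument above is the most transparent route and I would present that one.
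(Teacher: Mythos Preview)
Your proof is correct and follows essentially the same approach as the paper: both expand the flag $h$-numbers in terms of flag $f$-numbers, use the key identity $\sum_{v:\,\kappa(v)=i} f_R(\lk_\Delta v)=f_{R\cup\{i\}}(\Delta)$ for $R\subseteq[d]\setminus\{i\}$, and then do the sign bookkeeping. The only cosmetic difference is that the paper runs a single chain of equalities from the left-hand side to the right-hand side, whereas you compute each side separately and match them to the common expression $\sum_{R\subseteq S}(-1)^{|S|-|R|}f_{R\cup\{i\}}(\Delta)$.
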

We delay the proof of Proposition \ref{prop: short flag vector?} until after the proof of Theorem \ref{thm: flag DS}.
\begin{proof}[Proof of Theorem \ref{thm: flag DS}]
	Similar to the second proof of Theorem \ref{DSThm}, we will verify (\ref{eq: flag DS}) by double induction: first on the dimension of $\Delta$, and then on the size of $S$. 
	If $\dim \Delta = 0$, it is easy to check that (\ref{eq: flag DS}) holds. 
	
	Assume $\dim \Delta = d-1 > 0.$ The base case when $S= \emptyset$ follows from Theorem \ref{DSThm}:
	\[h_{\emptyset}(\Delta) - h_{[d]}(\Delta)   =   h_0(\Delta) - h_d(\Delta) \stackrel{\text{Thm.~3.1} }{=} (-1)^d \varepsilon_\Delta(\emptyset).\]
	
	For the inductive step, fix $j\in[d-1]$ and assume that for all $k>j$ and $|S|=d-k$, the equality (\ref{eq: flag DS}) holds.
	Up to reordering of the colors, it suffices to show that 
	\begin{align}\label{eq: NTS}
	h_{[j+1,d]} (\Delta) - h_{[j] }(\Delta) = (-1)^{j} \sum_{F\in\Delta:\; k (F)\subseteq [j+1,d] } \varepsilon_{\Delta}(F) .
	\end{align}
	
By Proposition \ref{prop: short flag vector?},
	\begin{align}\label{eq: LHS}
		&\quad \sum\limits_{\kappa(v)=j+1} \bigg[ h_{[j+1,d]- \{ j+1\}} \bigg(\lk_\Delta (v)\bigg) - h_{[j]} \bigg(\lk_\Delta (v) \bigg) \bigg]  \nonumber\\
		&=\bigg[ h_{[j+1,d]}\bigg(\Delta\bigg) + h_{[j+1,d]- \{ j+1\}} \bigg(\Delta\bigg)\bigg]  - \bigg[ h_{[j+1]}\bigg(\Delta\bigg) + h_{[j]} \bigg(\Delta\bigg)\bigg] \nonumber\\
		& = \bigg[ h_{[j+1,d]}\bigg(\Delta\bigg) -  h_{[j]} \bigg(\Delta\bigg) \bigg]  + \bigg[h_{[j+1,d]- \{ j+1\}} \bigg(\Delta\bigg) - h_{[j+1]}\bigg(\Delta\bigg)\bigg]  \\
		& \stackrel{\text{Ind.~Hyp.}}{=} \bigg[ h_{[j+1,d]}\bigg(\Delta\bigg) -  h_{[j]} \bigg(\Delta\bigg) \bigg]  + \bigg[ (-1)^{j+1} \sum\limits_{\substack{F\in \Delta,\;\\ \kappa(F)\subseteq [j+1,d] - \{j+1 \}} } \varepsilon_\Delta (F)\bigg].\nonumber
	\end{align}
	
	On the other hand, for each vertex $v$ such that $\kappa(v)= j+1$, $\lk_\Delta(v)$ is a  $(d-2)$-dimensional balanced simplicial complex with $\kappa: V\big(\lk_\Delta (v)\big) \to [d] - \{j+1 \}$. By the inductive hypothesis on $\lk_\Delta(v)$, we have
	\begin{align}\label{eq: link}
	h_{[j+1,d] - \{j+1\} } \bigg(\lk_\Delta (v)\bigg) - h_{[j]} \bigg(\lk_\Delta (v)\bigg)  = (-1)^{j} \sum_{\substack{F\in \lk_{\Delta} v, \\ k (F) \subseteq [j+1,d] - \{j+1\}  }  }\varepsilon_{\lk_\Delta (v)} (F).
	\end{align}
	Therefore,
	\begin{align}\label{eq: RHS}
	\sum\limits_{\kappa(v)=j+1} \bigg[ h_{[j+1,d]- \{ j+1\}} \bigg(\lk_\Delta (v)\bigg) - h_{[j]} \bigg(\lk_\Delta (v) \bigg) \bigg]
	&\stackrel{(\ref{eq: link})}{=} \sum\limits_{\kappa(v)=j+1} (-1)^{j} \sum\limits_{\substack{F\in \lk_{\Delta} v, \\ \kappa (F) \subseteq [j+1,d] - \{j+1\}  }  }\varepsilon_{\lk_\Delta (v)} (F)\nonumber\\
	&\stackrel{(\star)}{=}(-1)^{j} \sum\limits_{\substack{\kappa(v) =j+1; \;F\in \lk_{\Delta} v;  \\ k (F\cup v) \subseteq [j+1,d] }  }\varepsilon_{\Delta} (F\cup v). 
	\end{align}
where ($\star$) holds since for any $v\in V(\Delta)$ and $F \in \lk_\Delta (v)$, $\lk_{\lk_\Delta (v)}(F) = \lk_\Delta (F\cup v)$.

	Comparing (\ref{eq: LHS}) and (\ref{eq: RHS}), we obtain
	\begin{align*} h_{[j+1,d]}\bigg(\Delta\bigg) -  h_{[j]} \bigg(\Delta\bigg) &=  (-1)^{j} \bigg[\sum\limits_{\substack{F\in \Delta,\;\\ \kappa(F)\subseteq [j+1,d] - \{j+1 \}} } \varepsilon_\Delta (F) +  \sum\limits_{\substack{\kappa(v) =j+1; \;F\in \lk_{\Delta} v;  \\ k (F\cup v) \subseteq [j+1,d] }  }\varepsilon_{\Delta} (F\cup v) \bigg]\\
	&= (-1)^{j}\sum\limits_{\substack{F\in\Delta, \\  \kappa(F) \subseteq [j+1,d] }} \varepsilon_\Delta (F),
	\end{align*}
	Therefore (\ref{eq: NTS}) holds.
\end{proof}

\begin{proof}[Proof of Proposition \ref{prop: short flag vector?}]
	Recall that $i$ and $S$ are fixed and that $i\notin S$. The proof is a routine computation that relies on the definition of flag $h$-numbers in terms of flag $f$-numbers:
	\begin{align}\label{v notin S}
	\sum_{v: \kappa(v)=i} h_S(\lk_\Delta v)&= \sum_{v: \kappa(v)=i} \sum_{R\subseteq S} (-1)^{|S-R|}f_R(\lk_\Delta v)\nonumber\\
	&= \sum_{R\subseteq S} (-1)^{|S-R|} \sum_{v: \kappa(v)=i} f_R(\lk_\Delta v)\nonumber\\
	&= \sum_{ R\subseteq S}(-1)^{|S\cup \{ i \}|-|R\cup \{ i \}|}  f_{R \cup \{i \}}(\Delta)\nonumber\\
	&= h_{S\cup\{ i \}}(\Delta) - \sum_{T \subseteq S}(-1)^{|S\cup\{ i\} | -|T|}  f_{T}(\Delta)\\
	&= h_{S\cup\{ i \}}(\Delta)  + \sum_{\substack{ T \subseteq S}}(-1)^{|S|-|T|}  f_{T}(\Delta)\nonumber\\
	&= h_{S\cup\{ i \}}(\Delta)  + h_{S}(\Delta).\nonumber
	\end{align}
\end{proof}
Our second proof of Theorem \ref{eq: flag DS} uses the Hilbert series. The idea of this proof is similar to that of \cite[Theorem 3.8]{MR2505437}, which uses the following theorem by Stanley. Recall that $F(\Delta,\lambda)$ is the Hilbert series of $\field[\Delta]$ (w.r.t.~the $\ZZ^d$-grading), that for $F\in\Delta$, $\kappa(F)$ is the set of colors of vertices of $F$, and that for any subset $S\subseteq [d]$, $\lambda^S$ denotes $\prod_{i\in S}\lambda_i$. The following theorem is a corollary of \cite[II, Thm. 7.1]{St-96}.

\begin{theorem}\label{thm: Hilbert series}
	Let $\Delta$ be a $(d-1)$-dimensional balanced simplicial complex and let $F(k [ \Delta] ,\; 1/\lambda)$ be the Hilbert series. Then 
	\[(-1)^d F(k [ \Delta] ,\; 1/\lambda) =  (-1)^{d-1} \tilde{\chi} (\Delta) +  \sum_{F\in\Delta, \; F\neq \emptyset } (-1)^{d-|F|-1}\tilde{\chi} (\lk F)\cdot  \lambda^{\kappa(F)}\cdot \prod\limits_{v\in F} \frac{1}{1 - \lambda_{\kappa(v)} }. \]
\end{theorem}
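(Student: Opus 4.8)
The plan is to obtain the identity directly from the face decomposition of the fine Hilbert series, so that the citation to \cite[II, Thm.~7.1]{St-96} can be unpacked by hand. First I would recall that $\field[\Delta]$ has a $\field$-basis consisting of the monomials $x^a$ whose support $\supp(a)$ is a face of $\Delta$, and that grouping these by support gives the rational-function identity
\[
F(\field[\Delta],\lambda)\;=\;\sum_{F\in\Delta}\ \prod_{v\in F}\frac{\lambda_{\kappa(v)}}{1-\lambda_{\kappa(v)}},
\]
with the empty face contributing the summand $1$. Since $\Delta$ is balanced, the vertices of a face carry distinct colors, so each summand has denominator dividing $\prod_{i=1}^{d}(1-\lambda_i)$; this matches the description of $F(\field[\Delta],\lambda)$ recalled in Section~\ref{sect:prelim} and, in particular, makes the substitution $\lambda_i\mapsto 1/\lambda_i$ in the rational function meaningful.

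Next I would carry out that substitution termwise. Using $\dfrac{1/\lambda_{\kappa(v)}}{1-1/\lambda_{\kappa(v)}}=\dfrac{-1}{1-\lambda_{\kappa(v)}}$ this gives
\[
(-1)^d F(\field[\Delta],1/\lambda)\;=\;\sum_{F\in\Delta}(-1)^{d-|F|}\prod_{v\in F}\frac{1}{1-\lambda_{\kappa(v)}}.
\]
On the other side, using $\lambda^{\kappa(F)}=\prod_{v\in F}\lambda_{\kappa(v)}$ (balancedness again) and absorbing the stray term $(-1)^{d-1}\tilde{\chi}(\Delta)$ back into the sum as the contribution of $F=\emptyset$ (for which $\lk_\Delta\emptyset=\Delta$), the right-hand side of the asserted formula becomes
\[
-\sum_{F\in\Delta}(-1)^{d-|F|}\,\tilde{\chi}(\lk_\Delta F)\prod_{v\in F}\frac{\lambda_{\kappa(v)}}{1-\lambda_{\kappa(v)}}.
\]
Thus the theorem is reduced to showing that these two sums over the faces of $\Delta$ coincide.

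The final step is a reindexing argument. I would write $\tilde{\chi}(\lk_\Delta F)=\sum_{G\in\lk_\Delta F}(-1)^{|G|-1}$ and expand the last display as a double sum over pairs $(F,G)$ with $F\cap G=\emptyset$ and $F\cup G\in\Delta$; such pairs biject with pairs $(H,F)$ satisfying $H\in\Delta$ and $F\subseteq H$ via $H=F\sqcup G$. Since $|F|+|G|=|H|$, the signs collapse and the double sum becomes
\[
\sum_{H\in\Delta}(-1)^{d-|H|}\sum_{F\subseteq H}\prod_{v\in F}\frac{\lambda_{\kappa(v)}}{1-\lambda_{\kappa(v)}}
=\sum_{H\in\Delta}(-1)^{d-|H|}\prod_{v\in H}\Bigl(1+\frac{\lambda_{\kappa(v)}}{1-\lambda_{\kappa(v)}}\Bigr)
=\sum_{H\in\Delta}(-1)^{d-|H|}\prod_{v\in H}\frac{1}{1-\lambda_{\kappa(v)}},
\]
which is exactly the expression found for $(-1)^d F(\field[\Delta],1/\lambda)$. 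This closes the proof. The argument is soft, so I do not expect a real obstacle; the only thing that needs care is the sign bookkeeping, together with the observation that the ``Euler characteristics of links'' expansion is just the face expansion regrouped after the substitution $\lambda\mapsto 1/\lambda$. Deriving the statement instead from \cite[II, Thm.~7.1]{St-96} would require performing exactly the same regrouping inside Stanley's local-cohomology formula, so I would present the self-contained version above.
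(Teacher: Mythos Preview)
Your argument is correct. The paper does not actually prove this theorem; it is simply stated as ``a corollary of \cite[II, Thm.~7.1]{St-96}'', i.e., of Stanley's formula for $F(\field[\Delta],1/\lambda)$ in terms of local cohomology (which, via Hochster's formula, produces the reduced Euler characteristics of links). Your route is genuinely different and more elementary: you bypass local cohomology entirely and obtain the identity by combining the face-decomposition $F(\field[\Delta],\lambda)=\sum_{F\in\Delta}\prod_{v\in F}\lambda_{\kappa(v)}/(1-\lambda_{\kappa(v)})$ with a direct reindexing $(F,G)\leftrightarrow(H,F)$, $H=F\sqcup G$, after the substitution $\lambda\mapsto 1/\lambda$. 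What this buys is a fully self-contained derivation that uses nothing beyond the combinatorics of the Stanley--Reisner monomial basis; what the citation buys is that Stanley's theorem covers arbitrary (not necessarily fine/balanced) gradings and carries more homological information, though none of that extra strength is needed here. Your sign bookkeeping checks out, and the absorption of the $(-1)^{d-1}\tilde{\chi}(\Delta)$ term as the $F=\emptyset$ summand is exactly right.
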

We will also use the following relation that follows easily from the definition of the flag $h$-numbers, see, the proof of \cite[Theorem 3.8]{MR2505437}: 
\begin{equation} \label{eq:Sw}
\sum_{F\in\Delta, \; F\neq \emptyset } \lambda^{\kappa(F)} \prod_{i\in [d]-\kappa(F)} (1 - \lambda_i )  = \sum_{S\subseteq [d]} \bigg[ h_S(\Delta) - (-1)^{|S|} \bigg] \cdot \lambda^S.
\end{equation}

\begin{proof}[Second proof of Theorem \ref{eq: flag DS} ]
	\begin{align}\label{eq: LHS Hilbeart function}
	&(-1)^d F(k [ \Delta] ,\; 1/\lambda) \nonumber \\ 
	\stackrel{\text{Thm.~\ref{thm: Hilbert series} }}{=} & (-1)^{d-1} \tilde{\chi} (\Delta) +  \sum_{F\in\Delta, \; F\neq \emptyset } (-1)^{d-|F|-1}\tilde{\chi} (\lk F) \cdot  \lambda^{\kappa(F)}\cdot\prod\limits_{v\in F} \frac{1}{1 - \lambda_{\kappa(v)} } \nonumber\\
	=\quad & (-1)^{d-1} \tilde{\chi} (\Delta) +  \bigg(\prod_{j=1}^d \frac{1}{1-\lambda_j}  \bigg) \cdot  \sum_{F\in\Delta, \; F\neq \emptyset } (-1)^{d-|F|-1}\underbrace{\tilde{\chi} (\lk F)}_{ =  (-1)^{d-1-|F|} + \varepsilon_\Delta (F)} \lambda^{\kappa(F)} 
 \prod_{i\in [d]-\kappa(F)} (1 - \lambda_i ) \nonumber\\
		=\quad & (-1)^{d-1} \tilde{\chi} (\Delta) 
	+ \bigg(\prod_{j=1}^d \frac{1}{1-\lambda_j}  \bigg) \cdot  \sum_{F\in\Delta, \; F\neq \emptyset } \lambda^{\kappa(F)} \prod_{i\in [d]-\kappa(F)} (1 - \lambda_i ) \nonumber\\
	& \quad\quad\quad\quad\quad\;\; +  \bigg(\prod_{j=1}^d \frac{1}{1-\lambda_j}  \bigg) \cdot  \sum_{F\in\Delta, \; F\neq \emptyset } (-1)^{d-|F|-1}\cdot \varepsilon_\Delta (F) \lambda^{\kappa(F)}  \prod_{i\in [d]-\kappa(F)} (1 - \lambda_i ) \nonumber\\
	\stackrel{(\ref{eq:Sw})}{=} \quad & (-1)^{d-1} \tilde{\chi} (\Delta) 
	+ \bigg(\prod_{j=1}^d \frac{1}{1-\lambda_j}  \bigg) \cdot \sum_{S\subseteq [d]} \bigg[ h_S(\Delta) - (-1)^{|S|} \bigg] \cdot \lambda^S \\
	& \quad\quad\quad\quad\quad\;\; +  \bigg(\prod_{j=1}^d \frac{1}{1-\lambda_j}  \bigg) \cdot  \sum_{F\in\Delta, \; F\neq \emptyset } (-1)^{d-|F|-1}\cdot \varepsilon_\Delta (F) \lambda^{\kappa(F)} \prod_{i\in [d]-\kappa(F)} (1 - \lambda_i ). \nonumber
	\end{align}
On the other hand, 
	\begin{align}\label{eq: RHS Hilbeart function}
	(-1)^d F(k [ \Delta] ,\; 1/\lambda) = \bigg(\prod_{j=1}^d \frac{1}{1-\lambda_j}  \bigg) \cdot  \sum\limits_{S\subset [d]} h_S(\Delta) \lambda^{[d]-S}.
	\end{align}
Comparing (\ref{eq: LHS Hilbeart function}) with (\ref{eq: RHS Hilbeart function}) and multiplying both sides by $\prod_{j=1}^d(1-\lambda_i)$, we obtain
	\begin{align*}
	(-1)^{d-1} \tilde{\chi}(\Delta)\cdot \prod_{j=1}^d(1-\lambda_i) &+ \sum_{S\subseteq [d]} \bigg[ h_S(\Delta) - (-1)^{|S|} \bigg] \cdot \lambda^S \\
	&+   \sum_{F\in\Delta, \; F\neq \emptyset } (-1)^{d-|F|-1}\cdot \varepsilon_\Delta (F) \lambda^{\kappa(F)}  \cdot \prod_{i\in [d]-\kappa(F)} (1 - \lambda_i )
	= \sum\limits_{S\subset [d]} h_S(\Delta) \lambda^{[d]-S}.
	\end{align*}
	Therefore
	\begin{align*}
	&(-1)^{d-1} \tilde{\chi}(\Delta)\cdot \prod_{j=1}^d(1-\lambda_i) + \sum_{S\subseteq [d]} (-1)^{|S|-1}\cdot \lambda^S +   \sum_{F\in\Delta, \; F\neq \emptyset } (-1)^{d-|F|-1}\cdot \varepsilon_\Delta (F) \lambda^{\kappa(F)}  \cdot \prod_{i\in [d]-\kappa(F)} (1 - \lambda_i ) \\
	=&  \sum\limits_{S\subset [d]} \bigg[h_S(\Delta) \lambda^{[d]-S} - h_S(\Delta) \lambda^{S}\bigg].
	\end{align*}
The coefficient of  $\lambda^S$ on the RHS is $h_{[d]-S} - h_S$. The coefficient of  $\lambda^S$ on the LHS is:
	\begin{align*}&(-1)^{d-|S|-1}\tilde{\chi}(\Delta) + (-1)^{|S|-1} +\sum_{F\in\Delta_S,\;F\neq \emptyset} (-1)^{d-|F|-1} \cdot \varepsilon_\Delta(F)\cdot (-1)^{|S|-|F|}\\
	=& (-1)^{d-|S|-1} \sum_{F\in\Delta_S} \varepsilon_\Delta(F).
	\end{align*}
	Together we obtain 
	\[ h_{[d]-S} - h_S = (-1)^{d-|S|-1} \sum_{F\in\Delta_S} \varepsilon_\Delta(F).\]
\end{proof}

\begin{rem}\label{remark: third pf}
	A similar argument, but using the coarse Hilbert series,  provides yet another proof of Theorem \ref{DSThm}.
\end{rem}

Observe that Theorem \ref{thm: flag DS} refines Theorem \ref{DSThm}: summing eq.~(\ref{eq: flag DS}) over all subsets $S\subseteq [d]$ of size $i$, we obtain
	\begin{align*} 
		\sum_{|S|=i} h_S (\Delta) -\sum_{|S|=i} h_{S^c} (\Delta) = (-1)^{i-1}\sum_{F \in \Delta} {d-|F| \choose i} \varepsilon_{\Delta}(F),
	\end{align*}
	which is equivalent to eq.~(\ref{eqn: simplicial DS formula}).
	
Recall from Section \ref{sect:prelim} that for a graded poset $P$, the complex $O(P)$ is always balanced w.r.t.~the coloring given by the rank function, and that for $S\subseteq [d]$, $\beta_P(S)=h_S(O(P))$. Moreover, by Remark \ref{rmk: chain error = link error}, the chain error of a poset ($\varepsilon_P(-)$) is the same as the link error of its order complex  ($\varepsilon_{O(P)}(-)$). The following corollary now follows directly from Theorem \ref{thm: flag DS}:

\begin{corollary}
	Let $P$ be a graded poset with rank $d+1$ and let $S\subseteq [d]$. Then
	\[\beta_P(S) - \beta_P(S^c)  =  (-1)^{d-|S|}\sum_{C\in \mathcal{C}(P_S)}\varepsilon_P(C) .\]
	where $P_S$ is the $S$-selected subposet of $P$, and $\mathcal{C}(P_S)$ denotes the set of all chains in $P_S \backslash \{\hat{0},\hat{1}\}$.
\end{corollary}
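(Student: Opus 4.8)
The plan is to deduce the corollary directly from Theorem~\ref{thm: flag DS} applied to the order complex $\Delta = O(P)$, exactly as the surrounding text announces. First I would recall that since $P$ has rank $d+1$, the complex $\Delta = O(P)$ is $(d-1)$-dimensional, and it is balanced with respect to the coloring $\kappa = \rho$ (the rank function), whose values on $P \setminus \{\hat{0},\hat{1}\}$ lie in $[d]$. Under this dictionary, the equalities from Section~\ref{sect:prelim} give $\beta_P(S) = h_S(O(P)) = h_S(\Delta)$ and $\beta_P(S^c) = h_{S^c}(\Delta)$, so the left-hand side of the asserted identity is precisely $h_S(\Delta) - h_{S^c}(\Delta)$.

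Next I would invoke Theorem~\ref{thm: flag DS}, which yields
\[
h_S(\Delta) - h_{S^c}(\Delta) = (-1)^{d-|S|}\sum_{F\in\Delta_S}\varepsilon_{\Delta}(F),
\]
and it then remains only to re-index the right-hand sum in poset language. Recall that the $S$-rank-selected subcomplex satisfies $\Delta_S = O(P)_S = O(P_S)$, and that every face $F\in\Delta_S$ is of the form $F = F_C$ for a unique chain $C$ of $P_S \setminus \{\hat{0},\hat{1}\}$ (with $F=\emptyset$ corresponding to the empty chain); thus the faces of $\Delta_S$ are in bijection with the elements of $\mathcal{C}(P_S)$. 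The key point is that the error $\varepsilon_\Delta(F_C)$ above is computed via the link $\lk_{O(P)} F_C$ inside the \emph{full} complex $O(P)$, whose join decomposition involves the open intervals of $P$; by Remark~\ref{rmk: chain error = link error} this equals the chain error $\varepsilon_P(C)$ of the full poset $P$. Substituting, $\sum_{F\in\Delta_S}\varepsilon_\Delta(F) = \sum_{C\in\mathcal{C}(P_S)}\varepsilon_P(C)$, with the sign $(-1)^{d-|S|}$ unchanged, which is exactly the claimed formula.

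Since every ingredient — the dictionary between $O(P)$ and $P$, the identity $\beta_P(S)=h_S(O(P))$, the equality $\Delta_S = O(P_S)$, and Remark~\ref{rmk: chain error = link error} — is already established, I do not expect a genuine obstacle here; the statement is a translation of Theorem~\ref{thm: flag DS}. The only care needed is the bookkeeping: confirming that the relevant error is taken in the ambient poset $P$ (not in the subposet $P_S$), and checking that the empty chain is included on both sides so the two sums match term by term.
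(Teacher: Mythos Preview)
Your proposal is correct and follows exactly the route the paper takes: the corollary is stated as an immediate consequence of Theorem~\ref{thm: flag DS} applied to $\Delta=O(P)$, using the identifications $\beta_P(S)=h_S(O(P))$, $\Delta_S=O(P_S)$, and Remark~\ref{rmk: chain error = link error}. Your extra bookkeeping remarks (that the error is computed in $P$, not $P_S$, and that the empty chain is included) are accurate and helpful.
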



\section{Posets with isolated singularities}\label{sect:1-Sing posets}

Stanley extended the Dehn--Sommerville relations for Eulerian simplicial complexes to the generality of toric $h$-vectors of Eulerian posets. The goal of this and the following sections is to further generalize these relations to more general posets. 

We start by defining the {\bf error function for intervals in posets}. Let $P$ be a graded poset of rank $(d+1)$ and let $[s,t]$ be an interval in $P$. The error of $[s,t]$ is defined as
	\[ 
		e_P ([s,t]) := \mu_P(s,t) - (-1)^{\rho(t) - \rho(s)}.
	\]
From now on we will use $e_P (s,t)$ as the abbreviation for $e_P ([s,t])$.\footnote{We have already defined the error function for chains $\varepsilon_P(C)$, but to study toric $h$-vectors it is easier to use interval errors rather than link errors. The connection between the two will be discussed later in the proof of Corollary \ref{relation of varepsilons or O(P)}.}

\begin{definition} 
	A graded poset $P$ with $\rho(P)=d+1$ has {\bf singularities of rank 1} or  is {\bf $1$-Sing} if all intervals $[s,t]$ in $P$ of length $\rho(t)-\rho(s) \leq d-1$ are Eulerian. 
\end{definition}

\begin{proposition}
	A poset $P$ of rank $\rho(P)=d+1$ is $1$-Sing if and only if its reduced order complex $O(P)$ satisfies the following condition: for all faces $F\in O(P)$ with $dim(F)\geq 1$, $\tilde{\chi}(\lk_{O(P)} F) = (-1)^{d-1-|F|}$.
\end{proposition}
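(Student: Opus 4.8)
The plan is to translate the poset condition directly into the language of the order complex using the dictionary already set up in the preliminaries, namely equation (\ref{eqn: chi and mu 2}) and the definition of the chain error $\varepsilon_P(C)$. Recall that faces $F$ of $O(P)$ correspond to chains $C = \{t_1 < \cdots < t_k\}$ in the open interval $(\hat 0, \hat 1)$, with $|F| = k = |C|$, and that by (\ref{eqn: chi and mu 2}),
\[
\tilde{\chi}(\lk_{O(P)} F_C) = (-1)^{|F|}\,\mu_P(\hat 0, t_1)\,\mu_P(t_1, t_2)\cdots\mu_P(t_{k-1}, t_k)\,\mu_P(t_k, \hat 1).
\]
So the condition $\tilde{\chi}(\lk_{O(P)} F) = (-1)^{d-1-|F|}$ for a $k$-chain $C$ is equivalent to
\[
\mu_P(\hat 0, t_1)\,\mu_P(t_1, t_2)\cdots\mu_P(t_k, \hat 1) = (-1)^{d-1},
\]
i.e., to $\mu_P(C) = (-1)^{d+1}$, i.e., to $\varepsilon_P(C) = 0$. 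Thus, after invoking Remark \ref{rmk: chain error = link error}, the proposition reduces to the purely order-theoretic claim: $P$ is $1$-Sing if and only if $\mu_P(C) = (-1)^{d+1}$ for every chain $C$ in $(\hat 0, \hat 1)$ of size $\geq 2$.

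For the forward direction, I would assume $P$ is $1$-Sing and take a chain $C = \{t_1 < \cdots < t_k\}$ with $k \geq 2$. Each factor $\mu_P(t_i, t_{i+1})$ is an interval of length $< d+1$; the only factors that could fail to equal $(-1)^{\text{length}}$ are the end factors $\mu_P(\hat 0, t_1)$ and $\mu_P(t_k, \hat 1)$, and the intermediate factors are all proper intervals of length $\leq d-1$ (in fact $\leq d$), hence Eulerian. The key observation is that when $k \geq 2$, the two end intervals $[\hat 0, t_1]$ and $[t_k, \hat 1]$ both have length $\leq d$; I need length $\leq d-1$, which holds because $t_1 < t_k$ forces $\rho(t_1) \leq d-1$ and $\rho(t_k) \geq 2$, so $[\hat 0, t_1]$ has length $\leq d-1$ and $[t_k, \hat 1]$ has length $\leq d-1$. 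Hence by $1$-Sing every factor equals $(-1)$ to its length, and the product of the lengths is $(d+1)$, giving $\mu_P(C) = (-1)^{d+1}$. (One must handle separately the degenerate factors of length $0$ or $1$, where $\mu = 1$ or $\mu = -1$ respectively, but these are automatically consistent with $(-1)^{\text{length}}$.)

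For the converse, I would assume the link condition holds for all $F$ with $\dim F \geq 1$ (equivalently $\mu_P(C) = (-1)^{d+1}$ for all chains of size $\geq 2$) and show every interval $[s,t]$ with $\rho(t) - \rho(s) \leq d-1$ is Eulerian. The idea is to embed a given interval into a long chain so that the unwanted factors telescope. Given $[s,t]$, pick any chain $\hat 0 = s_0 < s_1 < \cdots < s_p = s$ and $t = u_0 < u_1 < \cdots < u_q = \hat 1$ refining maximally below $s$ and above $t$ (this is where $\rho(t)-\rho(s) \leq d-1$ is used: it guarantees that together with $s < t$ the resulting chain in $(\hat 0, \hat 1)$ has size $\geq 2$, in fact we can make it long enough that all the "collar" factors $\mu_P(s_{i-1}, s_i)$ and $\mu_P(u_{j-1}, u_j)$ are over intervals of length $1$, hence equal to $-1$). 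Applying the hypothesis to this chain and to the same chain with $[s,t]$ subdivided versus not, and dividing the two resulting equalities, isolates $\mu_P(s,t) = (-1)^{\rho(t)-\rho(s)}$. To get the full Eulerian condition one also needs $\mu_P(s,t) = (-1)^{\rho(t)-\rho(s)}$ for the proper sub-intervals of $[s,t]$, but those have even smaller length, so the same argument applies; and the trivial intervals of length $0$ and $1$ are Eulerian by inspection.

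The main obstacle will be the bookkeeping in the converse: making sure the auxiliary chain used to isolate $\mu_P(s,t)$ genuinely has all its other factors controlled (i.e. lengths $0$ or $1$, or otherwise already known Eulerian) and has total size $\geq 2$ so the hypothesis applies — this is exactly where the bound $\rho(t) - \rho(s) \leq d-1$ enters and cannot be relaxed. I expect the forward direction and the dictionary translation to be essentially immediate; the converse requires a careful but elementary chain-surgery argument, and I would present it by explicitly writing $\mu_P(C)$ for two nested chains and taking the ratio.
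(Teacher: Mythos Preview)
Your approach is correct and is essentially the same as the paper's own (which proceeds via the more general Proposition~\ref{prop: j-Sing and j-CSing}): translate the link condition into $\mu_P(C)=(-1)^{d+1}$ via equation~(\ref{eqn: chi and mu 2}), then argue each direction by bounding the lengths of the intervals that appear in a suitable chain. One simplification: in the converse you do not need two chains and a ``ratio'' argument. As you yourself note parenthetically, once you pad $[s,t]$ on both sides by a \emph{maximal} chain, every collar factor is $\mu$ over an interval of length~$1$ and hence equals $-1$; a single application of the hypothesis to that one chain already forces $\mu_P(s,t)=(-1)^{\rho(t)-\rho(s)}$. The subdivided chain would introduce unknown $\mu$-values over sub-intervals of $[s,t]$, so the division idea does not actually isolate anything without a further induction you have not set up. Drop it and the argument is clean and matches the paper exactly.
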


We omit the proof as we will prove a generalization of this result in Proposition \ref{prop: j-Sing and j-CSing}. Our work in the rest of this section is motivated by the following theorem of Stanley \cite{MR951205} and its generalization due to Swartz (see \cite[Theorem 3.15]{MR2505437}). 

\begin{theorem}[Stanley]\label{thm:Stanley}
	Let $P$ be an Eulerian poset of rank $d+1$. Then $\hat{h}_i(P)= \hat{h}_{d-i}(P)$ for all $0\leq i\leq d$.
\end{theorem}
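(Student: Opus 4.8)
The plan is to prove the equivalent assertion $\hat h(P,x)=x^{d}\hat h(P,1/x)$ by induction on the rank $d+1$ of $P$, the cases of rank at most $1$ being immediate from $\hat h(\mathbbm 1,x)=1$ and the defining recursion. So fix an Eulerian poset $P$ of rank $d+1\geq 2$. Since every interval of an Eulerian poset is again Eulerian, each proper lower interval $[\hat 0,t]$ with $t<\hat 1$ is an Eulerian poset of rank $\rho(t)\leq d$, and the inductive hypothesis applies to it: $\hat h([\hat 0,t],x)$ is symmetric of degree $\rho(t)-1$.

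First I would isolate two ingredients. (i) The standard dictionary relating a symmetric polynomial to its $g$-polynomial: if $H(x)=\sum_{i=0}^{n}h_ix^i$ is symmetric and $G(x)=h_0+(h_1-h_0)x+\dots+(h_m-h_{m-1})x^m$ with $m=\lfloor n/2\rfloor$, then $(x-1)H(x)=x^{n+1}G(1/x)-G(x)$. Applying this to $H=\hat h([\hat 0,t],\cdot)$ and $G=\hat g([\hat 0,t],\cdot)$ — which is legitimate precisely because the inductive hypothesis guarantees the needed symmetry — and abbreviating $\hat g_t^{\mathrm{rev}}(x):=x^{\rho(t)}\hat g([\hat 0,t],1/x)$, this reads
\[(x-1)\,\hat h([\hat 0,t],x)=\hat g_t^{\mathrm{rev}}(x)-\hat g([\hat 0,t],x)\qquad(t<\hat 1,\ t\neq\hat 0),\]
while for $t=\hat 0$ one simply has $\hat g_{\hat 0}^{\mathrm{rev}}=\hat g([\hat 0,\hat 0],\cdot)=1$. (ii) A M\"obius identity: for any Eulerian poset $Q$ of rank at least $1$, $\sum_{q\in Q}(-1)^{\rho(q)}=\sum_{q\in Q}\mu_Q(\hat 0,q)=0$. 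Because each upper interval $[s,\hat 1]$ with $s<\hat 1$ is Eulerian, this yields $\sum_{s\leq t\leq\hat 1}(-1)^{\rho(t)}=0$, hence $\sum_{s<t<\hat 1}(-1)^{d-\rho(t)}=1-(-1)^{d-\rho(s)}$ for every $s<\hat 1$.

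Next I would feed these into the defining recursion $\hat h(P,x)=\sum_{t<\hat 1}\hat g([\hat 0,t],x)(x-1)^{d-\rho(t)}$. Substituting $1/x$ for $x$ and multiplying through by $x^{d}$ gives $x^{d}\hat h(P,1/x)=\sum_{t<\hat 1}(-1)^{d-\rho(t)}\hat g_t^{\mathrm{rev}}(x)(x-1)^{d-\rho(t)}$, so that
\[\hat h(P,x)-x^{d}\hat h(P,1/x)=\sum_{t<\hat 1}(x-1)^{d-\rho(t)}\big[\hat g([\hat 0,t],x)-(-1)^{d-\rho(t)}\hat g_t^{\mathrm{rev}}(x)\big].\]
In the summands with $t\neq\hat 0$ I substitute (i); this splits the $t$-th summand into a scalar multiple of $(x-1)^{d-\rho(t)}\hat g([\hat 0,t],x)$ and a scalar multiple of $(x-1)^{d-\rho(t)+1}\hat h([\hat 0,t],x)$. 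Expanding this last $\hat h([\hat 0,t],\cdot)$ by the recursion once more and interchanging the order of summation turns it into a double sum whose inner sum over $t$ is exactly $\sum_{s<t<\hat 1}(-1)^{d-\rho(t)}$. Applying (ii), the coefficient of $(x-1)^{d-\rho(s)}\hat g([\hat 0,s],x)$ vanishes for every $s\neq\hat 0$, and the residual $s=\hat 0$ contribution cancels against the $t=\hat 0$ term of the display above, leaving $0$. This closes the induction and proves $\hat h_i(P)=\hat h_{d-i}(P)$.

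The routine part — and the only real place to be careful — is this last step: the reorganization of the double sum and the bookkeeping of the signs $(-1)^{d-\rho(t)}$ together with the isolated $t=\hat 0$ terms. The one conceptual point worth flagging is that the recursion defining $\hat h(P,\cdot)$ only ``sees'' the lower intervals $[\hat 0,t]$, so the hypothesis that $P$ is Eulerian must be used a second time — through ingredient (ii), which invokes the Eulerian property of the upper intervals $[s,\hat 1]$ — in order to close the induction at the top rank.
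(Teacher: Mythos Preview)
Your argument is correct. Ingredient (i) is exactly the Eulerian case of the identity $\hat g(Q)+(x-1)\hat h(Q)=x^{\rho(Q)}\hat g(Q,1/x)$ (the paper records the semi-Eulerian version as Lemma~\ref{1-IS C(Q) lemma}), and ingredient (ii) together with the double expansion of the recursion does force the cancellation you describe; the sign bookkeeping and the handling of the $t=\hat 0$ and $s=\hat 0$ boundary terms check out.

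As for comparison: the paper does not give its own proof of Theorem~\ref{thm:Stanley}; it is quoted from Stanley's paper. That said, the paper's Lemma~\ref{graded C(Q) lemma} (valid for any graded poset, proved by M\"obius inversion) specializes in the Eulerian case---once one plugs in $\mu_P(q,\hat 1)=(-1)^{d+1-\rho(q)}$ and, by induction, the identity (i)---to $\hat h(P,x)-x^d\hat h(P,1/x)=0$. So the paper's machinery yields a proof that shares your ingredient (i) but replaces your direct use of the Eulerian alternating-sum identity $\sum_{s\le t\le \hat 1}(-1)^{\rho(t)}=0$ by a single M\"obius inversion step. The two routes are close cousins: your argument is Stanley's original induction, expanding the recursion twice and collapsing via (ii); the paper's route packages the same cancellation into M\"obius inversion, which has the advantage of producing a formula (Lemma~\ref{graded C(Q) lemma}) valid for \emph{all} graded posets, not just Eulerian ones---precisely what the paper needs for its non-Eulerian generalizations.
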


\begin{theorem}[Swartz]\label{thm:Swartz}
	Let $P$ be a semi-Eulerian poset $P$ of rank $d+1$ and let $O(P)$ be its reduced order complex. Then for all $0\leq i\leq d$,
		\[  
			\hat{h}_{d-i}(P) -\hat{h}_i(P)= (-1)^{d-i+1}{d \choose i}[\tilde\chi(O(P)) - (-1)^{d-1}] =(-1)^{d-i+1}{d \choose i}\cdot e_P(\hat{0},\hat{1}) .
		\]
\end{theorem}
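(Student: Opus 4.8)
\textbf{Proof proposal for Theorem~\ref{thm:Swartz} (Swartz's relation).}

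The plan is to mimic the recursive structure used in the definition of the toric $h$- and $g$-polynomials, running an induction on the rank $d+1$ of $P$. For a semi-Eulerian poset $P$, every proper lower interval $Q = [\hat 0, t]$ with $t \neq \hat 1$ is in fact Eulerian (since $\mu_P(s,t) = (-1)^{\rho(t)-\rho(s)}$ for every \emph{proper} subinterval, and these are exactly the subintervals of the Eulerian-candidate $[\hat 0,t]$); hence by Stanley's Theorem~\ref{thm:Stanley} each $\hat h(Q,x)$ is symmetric of degree $\rho(Q)-1$. The only interval of $P$ that may fail to be Eulerian is $P$ itself, and $e_P(\hat 0,\hat 1) = \mu_P(\hat 0,\hat 1) - (-1)^{d+1} = \tilde\chi(O(P)) - (-1)^{d-1}$ by \eqref{eqn: chi and mu 1}. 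So the strategy is: write $\hat h(P,x) = \sum_{Q \in \widetilde P,\, Q\neq P}\hat g(Q,x)(x-1)^{d-\rho(Q)}$, form the ``reversed'' polynomial $x^d \hat h(P,1/x)$, and compare term by term.

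Concretely, I would first record that for an Eulerian poset $Q$ of rank $r+1$, not only is $\hat h(Q,x)$ symmetric of degree $r$, but $\hat g(Q,x)$ satisfies a clean reversal identity: $x^{d-\rho(Q)+1}\hat g(Q,1/x)\cdot(\text{something})$ matches up with $\hat g(Q,x)(x-1)^{d-\rho(Q)}$ after accounting for the degree shift. The cleanest route is Stanley's own observation that for Eulerian $Q$ one has the polynomial identity relating $\hat g(Q,x)(x-1)^{d-\rho(Q)}$ to its reversal; summing these over all $Q \in \widetilde P \setminus \{P\}$ then shows that $x^d\hat h(P,1/x) - \hat h(P,x)$ is governed \emph{entirely} by the single ``missing'' term — namely the term one would add to make $\widetilde P$ include $P$ itself, i.e. $\hat g(P,x)$-type contributions — but since $P$ is only semi-Eulerian this discrepancy is exactly the correction coming from $\mu_P(\hat 0,\hat 1) \neq (-1)^{d+1}$. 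Tracking the binomial coefficient: the rank-$\hat 1$ correction enters through a term proportional to $(x-1)^{d}$ evaluated against the reversal operation, and $x^d(1/x - 1)^d = (1-x)^d = (-1)^d(x-1)^d$, whose coefficient of $x^i$ is $(-1)^d(-1)^{d-i}\binom{d}{i} = (-1)^{i}\binom{d}{i}$; combined with the overall sign bookkeeping this produces the claimed $(-1)^{d-i+1}\binom{d}{i}[\tilde\chi(O(P)) - (-1)^{d-1}]$.

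I would carry out the steps in this order: (1) verify the base case $d = 0$ (and $d=1$) directly; (2) prove the auxiliary lemma that for Eulerian $Q$, $\hat g(Q,x)(x-1)^{e}$ is ``self-reciprocal up to the degree-$(e + \rho(Q)-1)$ reversal'' — this is essentially Stanley's lemma underlying Theorem~\ref{thm:Stanley} and can be cited/reproved quickly; (3) apply this to each summand $\hat g(Q,x)(x-1)^{d-\rho(Q)}$ with $Q \neq P$, so that $x^d\hat h(P,1/x)$ reassembles into $\sum_{Q\neq P}\hat g(Q,x)(x-1)^{d-\rho(Q)}$ \emph{plus} a single explicitly computable error term carrying the factor $e_P(\hat 0,\hat 1)$; (4) simplify that error term using $x^d(1/x-1)^d = (-1)^d(x-1)^d$ and extract coefficients. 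The main obstacle I anticipate is step (2)/(3): making precise exactly how the degree-$d$ reversal operator interacts with the recursive $\hat g$/$\hat h$ definition, since $\hat g$ is only ``half'' of $\hat h$ (its degree is $\lfloor \rho(Q)/2\rfloor$, not $\rho(Q)-1$), so the self-reciprocity is not literal but holds only after multiplication by the appropriate power of $(x-1)$ — keeping the exponents and the induction hypothesis aligned across the different ranks $\rho(Q)$ is the delicate part. An alternative, perhaps cleaner, route that avoids re-deriving Stanley's lemma: induct and use Swartz's result on the \emph{link} posets $[\hat 0, t]$ together with the fact that $\hat g$ of a semi-Eulerian poset is still symmetric-plus-correction, then telescope; but this essentially repackages the same computation.
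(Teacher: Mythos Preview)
The paper does not give its own proof of Theorem~\ref{thm:Swartz}; it is quoted from Swartz. However, the paper's Lemma~\ref{graded C(Q) lemma} (proved via M\"obius inversion) specializes immediately to a proof: for semi-Eulerian $P$, every proper lower interval $Q$ is Eulerian, so $\hat g(Q) + (x-1)\hat h(Q) = x^{\rho(Q)}\hat g(Q,1/x)$ and $\mu_P(q,\hat 1) = (-1)^{d+1-\rho(q)}$; substituting these into \eqref{eq:2} collapses every summand with $1 \le \rho(Q) \le d$ to zero, leaving $\hat h(P) - x^d\hat h(P,1/x) = -e_P(\hat 0,\hat 1)(x-1)^d$, which is the claim.

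Your outline has the right overall shape but a real gap at step~(2). The individual summands $\hat g(Q,x)(x-1)^{d-\rho(Q)}$ are \emph{not} self-reciprocal under the degree-$d$ (or degree-$(d-1)$) reversal, even for Eulerian $Q$: already for $Q$ the Boolean lattice of rank $2$ one has $\hat g(Q,x)=1$, and $x^{d}(1/x-1)^{d-2} = (-1)^{d}x^{2}(x-1)^{d-2} \neq (x-1)^{d-2}$. So the term-by-term matching you describe in step~(3) cannot work, and there is no lemma of Stanley asserting it. What makes the sum collapse is not self-reciprocity of each piece but a M\"obius inversion that rewrites
\[
\hat h(P,x) \;=\; -\mu_P(\hat 0,\hat 1)\,y^{d} \;-\; \sum_{1\le\rho(q)\le d} y^{\,d-\rho(q)}\bigl(\hat g(Q)+y\hat h(Q)\bigr)\mu_P(q,\hat 1),
\]
which is exactly Lemma~\ref{graded C(Q) lemma}. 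The point is that $\hat g(Q) + y\hat h(Q)$, not $\hat g(Q)$ alone, satisfies the clean reversal identity $\hat g(Q) + y\hat h(Q) = x^{\rho(Q)}\hat g(Q,1/x)$ for Eulerian $Q$ (this is the Eulerian case of Lemma~\ref{1-IS C(Q) lemma}). With this rewriting, each summand in the displayed expression matches the corresponding term of $x^d\hat h(P,1/x) = \sum_{Q\neq P}(-y)^{d-\rho(Q)}\hat g(Q,1/x)x^{\rho(Q)}$ exactly, and only the leading $-\mu_P(\hat 0,\hat 1)y^d$ survives as the error. Your ``alternative route'' in the last sentence is much closer to what actually works and is essentially what the paper's lemma encodes; your primary route via self-reciprocity of $\hat g(Q,x)(x-1)^{e}$ does not go through.
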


\begin{rem} 
	The formula given by Swartz in \cite[Theorem 3.15]{MR2505437} is equivalent to the statement above. Indeed, when $d$ is even, $P$ is Eulerian, and so the right hand-side is zero. If $d$ is odd, then $d-i+1$ and $i$ have the same parity and the formula above agrees with the one in Swartz's Theorem 3.15.
\end{rem}

Using ideas from Swartz's and Stanley's proofs, we establish the following generalization of Theorems \ref{thm:Stanley} and \ref{thm:Swartz} for $1$-Sing posets. For the rest of this section, we let $y=x-1$.

\begin{theorem}\label{1IS-Thm}
	Let $P$ be a graded $1$-{\bf Sing} poset, and let $\rho(P)=d+1$. Then for $i > \floor{\frac{d}{2}}$, 
	
	\begin{align*}
		\hat{h}_{d-i}(P) -\hat{h}_i(P)  
		= (-1)^{d-i+1} 
			\bigg[
			{{d}\choose{i}}e_P(\hat{0},\hat{1}) 
			+{{d}\choose{i}} \sum_{\rho(t)=d} e_P(\hat{0},t) 
			+ {{d-1}\choose{i-1}}\sum_{\rho(s)=1}  e_P(s,\hat{1})
			\bigg]. \nonumber
	\end{align*}
\end{theorem}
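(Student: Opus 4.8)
The plan is to read everything off the polynomial $\hat{h}(P,x) - x^{d}\hat{h}(P,\frac{1}{x})$, whose coefficient of $x^{i}$ is exactly $\hat{h}_{d-i}(P) - \hat{h}_{i}(P)$, and to expand both summands through the defining recursion for $\hat{h}$. The first step is to record what the $1$-Sing hypothesis buys: every proper interval of $P$ of length at most $d-1$ is Eulerian, so every lower interval $[\hat{0},t]$ with $\rho(t)\le d-1$ is Eulerian; every lower interval $[\hat{0},t]$ with $\rho(t)=d$ is semi-Eulerian (all its proper subintervals have length at most $d-1$); every upper interval $[s,\hat{1}]$ with $\rho(s)=1$ is semi-Eulerian; and the only remaining defect of $P$ is the interval $[\hat{0},\hat{1}]$ itself. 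Hence in the recursion $\hat{h}(P,x)=\sum_{t<\hat{1}}\hat{g}([\hat{0},t],x)(x-1)^{d-\rho(t)}$, each lower interval $[\hat{0},t]$ is either Eulerian — so Theorem \ref{thm:Stanley} makes $\hat{h}([\hat{0},t],x)$ symmetric and $\hat{g}([\hat{0},t],x)$ the ordinary toric $g$-polynomial — or has $\rho(t)=d$ and is semi-Eulerian, in which case Theorem \ref{thm:Swartz} describes $\hat{h}([\hat{0},t],x)-x^{d-1}\hat{h}([\hat{0},t],\frac{1}{x})$ in terms of the single error $e_{P}(\hat{0},t)$.

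Next I would substitute the recursion into $x^{d}\hat{h}(P,\frac{1}{x})$ as well. Writing $r=\rho(t)$, $m=\deg\hat{g}([\hat{0},t],x)=\lfloor\frac{r-1}{2}\rfloor$, and $\tilde{g}_{t}(x)=x^{m}\hat{g}([\hat{0},t],\frac{1}{x})$ for the reversal of $\hat{g}([\hat{0},t],x)$, a short computation turns the recursion into
\[
\hat{h}(P,x)-x^{d}\hat{h}\Bigl(P,\tfrac{1}{x}\Bigr)=\sum_{t<\hat{1}}(x-1)^{d-r}\Bigl[\hat{g}([\hat{0},t],x)-(-1)^{d-r}x^{\,r-m}\tilde{g}_{t}(x)\Bigr].
\]
These ``local discrepancies'' do not vanish term by term even for Eulerian $[\hat{0},t]$, so the heart of the argument is to collapse the Eulerian bulk. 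Following Stanley's argument, I would iterate the recursion: each $\hat{g}([\hat{0},t],x)$ with $[\hat{0},t]$ Eulerian is itself the truncation of $\hat{h}([\hat{0},t],x)=\sum_{u<t}\hat{g}([\hat{0},u],x)(x-1)^{\rho(t)-1-\rho(u)}$, and expanding repeatedly produces a sum over chains in $P$ carrying $(x-1)$-power weights. Using the symmetry of $\hat{h}$ on the Eulerian intervals together with the Möbius identity $\sum_{s\le x\le t}\mu_{P}(s,x)=\delta_{s,t}$, the chain sum telescopes and what remains is a weighted sum of interval errors $e_{P}(s,t)$; I expect the cleanest formulation to be a general (non-$1$-Sing) identity of this shape, proved by induction on rank with Theorems \ref{thm:Stanley} and \ref{thm:Swartz} as the base and first nontrivial cases.

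Specializing to the $1$-Sing poset $P$, the classification above annihilates all interval errors except those indexed by $[\hat{0},\hat{1}]$, by the $[\hat{0},t]$ with $\rho(t)=d$, and by the $[s,\hat{1}]$ with $\rho(s)=1$; the associated weights come out to $-(x-1)^{d}$, $-(x-1)^{d}$, and $-x(x-1)^{d-1}$, and extracting the coefficient of $x^{i}$ for $i>\lfloor\frac{d}{2}\rfloor$ — the range in which the $\hat{g}$-truncations of the Eulerian pieces have already dropped out by degree — yields the three binomial sums $\binom{d}{i}$, $\binom{d}{i}$, $\binom{d-1}{i-1}$ with the common sign $(-1)^{d-i+1}$, exactly as in the statement. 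The main obstacle, and the step that deserves the most care, is precisely this collapse of the Eulerian bulk: the recursion for $\hat{h}$ only ever refers to \emph{lower} intervals, so the upper-interval errors $e_{P}(s,\hat{1})$ are invisible term by term and only materialize through the Möbius identity in the telescoping step; furthermore the operation sending $\hat{h}$ to $\hat{g}$ does not commute with reciprocation, so these truncations must be tracked carefully throughout. By comparison, the convolutions of $(x-1)$-powers with binomial coefficients and the sign bookkeeping at the end are routine.
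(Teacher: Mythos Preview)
Your framework is right: study $\hat{h}(P,x)-x^{d}\hat{h}(P,1/x)$, classify the lower intervals via the $1$-Sing hypothesis, invoke Stanley and Swartz on each, and read off coefficients. The endpoints you predict --- three families of error terms with weights $-(x-1)^{d}$, $-(x-1)^{d}$, $-x(x-1)^{d-1}$ in the range $i>\lfloor d/2\rfloor$ --- match the paper exactly, and you correctly name the obstacle: the recursion sees only lower intervals, so the upper errors $e_{P}(s,\hat{1})$ must enter through M\"obius.

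The gap is in the mechanism you propose for that step. ``Iterate the recursion'' on $\hat{g}([\hat{0},t],x)$ runs straight into the truncation you yourself flag: $\hat{g}$ is only the degree-$\le\lfloor\frac{r-1}{2}\rfloor$ part of a polynomial built from $\hat{h}$, and that cut does not commute with further unfolding, so the hoped-for chain sum does not materialize cleanly and there is nothing obvious to telescope. The paper sidesteps this by working with the untruncated combination $\hat{g}(P)+y\hat{h}(P)$, where $y=x-1$. Multiplying the defining recursion by $y$ and adding $\hat{g}(P)$ gives, for $P\neq\mathbbm{1}$, the exact identity
\[
\hat{g}(P)+y\,\hat{h}(P)\;=\;\sum_{Q\in\tilde{P}}\hat{g}(Q)\,y^{\rho(P)-\rho(Q)},
\]
which has incidence-algebra shape and can be M\"obius-inverted in one stroke. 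This immediately produces a general closed formula (Lemma~\ref{graded C(Q) lemma}) for $\hat{h}(P)-x^{d}\hat{h}(P,1/x)$ in which each summand already carries the factor $\mu_{P}(q,\hat{1})$; no chain expansion and no induction on rank are needed. Under the $1$-Sing hypothesis every summand with $2\le\rho(q)\le d-1$ then vanishes (there $\mu_{P}(q,\hat{1})=(-1)^{d+1-\rho(q)}$ and $[\hat{0},q]$ is Eulerian, so $\hat{g}(Q)+y\hat{h}(Q)=x^{\rho(Q)}\hat{g}(Q,1/x)$ by Stanley's theorem), the $\rho(q)=d$ summands are handled by a direct comparison of $\hat{g}(Q)+y\hat{h}(Q)$ with $x^{\rho(Q)}\hat{g}(Q,1/x)$ using Swartz's theorem (packaged as Lemma~\ref{1-IS C(Q) lemma}), and what survives are precisely your three error terms with the stated weights.
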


\begin{proof}
	The left hand-side is the coefficient of $x^i$ in the polynomial $\hat{h}(P) -  x^d \cdot \hat{h}(P,1/x)
	$. We first prove the following lemma related to this polynomial. From now on we use $\hat{h}(P)$ to abbreviate $\hat{h}(P,x)$ and $\hat{g}(P)$ to abbreviate $\hat{g}(P,x)$. 

\begin{lemma}\label{graded C(Q) lemma}
	Let $P$ be a graded poset with $\rho(P)=d+1$ and let $y=x-1$. Then
		\begin{multline} 
			\hat{h}(P) -  x^d \cdot \hat{h}(P,1/x)= -\big[\mu_P(\hat{0},\hat{1}) - (-1)^{d+1} \big] y^{d} \\
			+ \sum_{\substack{Q=[0,q] \in \tilde{P} \\ 1\leq\rho(Q)\leq d}} \bigg(\bigg[-y^{d-\rho(Q)}\left(\hat{g}(Q) + y\hat{h}(Q)\right)\cdot \mu_P(q,\hat{1}) \bigg] - \bigg[ (-y)^{d-\rho(Q)}\hat{g}(Q,1/x)\cdot x^{\rho(Q)} \bigg] \bigg).\label{eq:2}
		\end{multline}
\end{lemma}

\begin{proof}

By definitions, for $P\neq \mathbbm{1}$,

	\begin{equation}\label{eq:x^d h}
	    x^d \cdot \hat{h}(P,1/x) = \sum_{Q\in \tilde{P},\; Q\neq P} (-y)^{d-\rho(Q)}\hat{g}(Q,1/x)\cdot x^{\rho(Q)},
	\end{equation}
and equivalently,
	\begin{equation}\label{eq:hdef} 
		\hat{h}(P) = \sum_{Q\in \tilde{P},\; Q\neq P} \hat{g}(Q) y^{d-\rho(Q)}. 
	\end{equation}

Multiplying equation (\ref{eq:hdef}) by $y$ and adding $\hat{g}(P)$ to both sides, we obtain that for $P\neq \mathbbm{1}$,
	\[ 
		\hat{g}(P) + y\hat{h}(P) = \sum_{Q\in \tilde{P}} \hat{g}(Q) y^{\rho(P)-\rho(Q)}. 
	\]
Therefore for $P\neq \mathbbm{1}$, 
	\[ 
		y^{-\rho(P)}\cdot \big(\hat{g}(P) + y\hat{h}(P)\big) = \sum_{Q\in \tilde{P}} \hat{g}(Q) y^{-\rho(Q)}. 
	\]
By M\"{o}bius inversion, 
	\begin{equation} \label{eq:1}
		\hat{g}(P) y^{-\rho(P)}  = \mu_P(\hat{0},\hat{1}) +  \sum_{\substack{Q=[0,q] \in \tilde{P} \\ 1\leq\rho(q)\leq d+1}}y^{-\rho(q)}\cdot \left(\hat{g}(Q) + y\hat{h}(Q)\right)\cdot \mu_P(q,\hat{1}).
	\end{equation} 
Multiplying (\ref{eq:1}) by $y^{\rho(P)}$ and then subtracting $\hat{g}(P) +y\hat{h}(P) $ yields
	\[
		-y\hat{h}(P)   = \mu_P(\hat{0},\hat{1})y^{\rho(P)} +\sum_{\substack{Q=[0,q] \in \tilde{P} \\ 1\leq\rho(q)\leq d}}y^{{\rho(P)}-\rho(q)}\left(\hat{g}(Q) + y\hat{h}(Q)\right)\cdot \mu_P(q,\hat{1}),
	\]
and so 
	\[
		\hat{h}(P)   = -\mu_P(\hat{0},\hat{1})y^{d} -\sum_{\substack{Q=[0,q] \in \tilde{P} \\ 1\leq\rho(q)\leq d}}y^{d-\rho(q)}\left(\hat{g}(Q) + y\hat{h}(Q)\right)\cdot \mu_P(q,\hat{1}).
	\]
This, together with equation ($\ref{eq:x^d h}$), proves the lemma. \end{proof}

Next we prove the following lemma, which helps us further simplify equation (\ref{eq:2}) for semi-Eulerian posets.

\begin{lemma}\label{1-IS C(Q) lemma}
	Let $Q$ be a semi-Eulerian poset with $\rho(Q)=r+1$, let $s = \floor*{\frac{r}{2}}$, and let $y=x-1$. Then
		\[
			\hat{g}(Q) +y\hat{h}(Q) = x^{\rho(Q)} \hat{g}(Q,1/x) + \sideset{}{^*}\sum_{k=r-s+1}^{r+1} \underbrace{(-1)^{r-k} {{r+1}\choose{k}} e_Q(Q)}_{\gamma_{k}}\cdot x^{k}, 
		\]
	where $\sideset{}{^*}\sum$ means that, if r is odd, then there is an extra summand, $\frac{1}{2}\gamma_k x^{k}$, for $k = r-s$.
\end{lemma}

\begin{proof}
Recall that $\hat{h}(Q) = \hat{h}_r + \hat{h}_{r-1}x +\dots +\hat{h}_0 x^r$. This together with the definition of $\hat{g}(Q)$ implies 
	\[
		\hat{g}(Q) +y\hat{h}(Q) = (\hat{h}_{r-s} - \hat{h}_{r-s-1})x^{s+1} + (\hat{h}_{r-s-1} - \hat{h}_{r-s-2})x^{s+2} + \dots + (\hat{h}_1 - \hat{h}_0)x^r +\hat{h}_0 x^{r+1},  
	\]
while
	\[
		x^{\rho(Q)} \hat{g}(Q,1/x) = (\hat{h}_{r-s} - \hat{h}_{r-s+1})x^{r-s+1} + (\hat{h}_{r-s+1} - \hat{h}_{r-s+2})x^{r-s+2}+\dots + (\hat{h}_{r-1} - \hat{h}_{r})x^r +\hat{h}_{r}x^{r+1}. 
	\]
By Theorem \ref{thm:Swartz}, if $Q$ is semi-Eulerian, then $\hat{h}_{r-k} = \hat{h}_{k} + (-1)^{r-k+1} {{r}\choose{k}} e_Q(\hat{0},\hat{1})$. Hence for $k<r$,
	\[
		\hat{h}_{r-k} - \hat{h}_{r-k-1} = (\hat{h}_{k} -\hat{h}_{k+1}) + (-1)^{r-k+1}\cdot \bigg[{{r}\choose{k}}+{{r}\choose{k+1}} \bigg]  e_Q(\hat{0},\hat{1}),
	\]
and since ${{r}\choose{k}}+{{r}\choose{k+1}} = {{r+1}\choose{k+1}}$, we infer that
	\[
		\hat{h}_{r-k} - \hat{h}_{r-k-1} = (\hat{h}_{k} -\hat{h}_{k+1}) + \underbrace{(-1)^{r-k+1} {{r+1}\choose{k+1}} \cdot e_Q(\hat{0},\hat{1}) }_{\gamma_{k+1}(Q)}.
	\]
Comparing the coefficients of $x^{k+1}$ in $\hat{g}(Q) +y\hat{h}(Q)$ and $ x^{\rho(Q)} \hat{g}(Q,1/x)$, yields the lemma. 
\end{proof}

Now we resume the proof of Theorem \ref{1IS-Thm}. For any lower interval $Q=[\hat{0},q]$ in $P$, if $1< \rho(q) < d$, then $Q$ is Eulerian. If $\rho(q)=d$, then $Q$ is semi-Eulerian. By Swartz's result and the lemma above,
	\[ \hat{g}(Q) + y\hat{h}(Q) =  
		\begin{cases} 
		       x^{\rho(q)}\hat{g}(Q,1/x) & \text{ if } 1\leq \rho(q) < d \\
		      x^{\rho(q)} \hat{g}(Q,1/x) +\sideset{}{^*}\sum\limits_{i=\floor*{\frac{d}{2}}+1}^{d} \gamma_i(Q)\cdot x^{i} & \text{ if } \rho(q) = d.
	   \end{cases}
	\]

We can now simplify equation ($\ref{eq:2}$):
	\begin{multline} 
		\hat{h}(P) -  x^d \cdot \hat{h}(P,1/x)= -e_P(\hat{0},\hat{1}) y^{d} \\
		- \sum_{\rho(Q) =1} y^{d-1}x\big[\mu_P(q,\hat{1}) -(-1)^{d}\big] 
		+\sum_{\rho(Q)=d} \;\;\sideset{}{^*}\sum\limits_{i=\floor{\frac{d}{2}}+1}^{d} \gamma_i(Q)\cdot x^{i},\label{eq:3}
	\end{multline}
and when $d$ is even, the last summation $\sideset{}{^*}\sum$ on the right hand-side has an extra summand $\frac{1}{2} \gamma_i x^i$ for $i=\floor{\frac{d}{2}}$. Comparing like-terms from both sides: for $i >\floor*{\frac{d}{2}}$,

	\begin{align}\label{eqn: like terms}
		\begin{split}
			\hat{h}_{d-i}&  -\hat{h}_i = (-1)^{d-i+1} \bigg[ {{d}\choose{i}}e_P(\hat{0},\hat{1}) +    {{d-1}\choose{i-1}} \sum_{\rho(q)=1}e_P(q,\hat{1})
			+{{d}\choose{i}}\sum_{\rho(q)=d}  e_P(\hat{0},q)\bigg]
		\end{split}
	\end{align}
as desired.
\end{proof}

The following special case is worth mentioning:  if $d$ is even and $i = \floor*{\frac{d}{2}} = \frac{d}{2} = d-i$, the left-hand side of (\ref{eqn: like terms}) is simply zero, and hence so is the right-hand side. This observation leads to the following Corollary. We will generalize it later in Corollaries \ref{relations of errors for P} and \ref{relation of varepsilons or O(P)}.

\begin{corollary}\label{Cor: 1-Sing chi relations}
	Let $P$ be a $1$-Sing poset with odd rank $\rho(P)=d+1$, and let $O(P)$ be the reduced order complex of $P$. Then
		\[
			2 (\tilde{\chi}(O(P)) +1) = \sum_{\rho(q)=1 \text{ or }d} 1 - \sum_{\rho(q)=1 \text{ or } d} \tilde{\chi}\big(\lk_{O(P)}v_q \big) 
		\]
	where $v_q$ is the vertex in $O(P)$ that corresponds to the element $q\in P$.
\end{corollary}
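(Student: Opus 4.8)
The plan is to specialize Theorem~\ref{1IS-Thm} (equivalently, equation~(\ref{eqn: like terms})) to the case $d$ even and $i = d/2$, where the left-hand side $\hat{h}_{d-i} - \hat{h}_i = \hat{h}_{d/2} - \hat{h}_{d/2}$ is identically $0$. Setting the right-hand side equal to zero gives
\[
	{{d}\choose{d/2}}e_P(\hat{0},\hat{1}) + {{d-1}\choose{d/2 - 1}} \sum_{\rho(q)=1} e_P(q,\hat{1}) + {{d}\choose{d/2}}\sum_{\rho(q)=d} e_P(\hat{0},q) = 0.
\]
Since $d$ is even, $\binom{d}{d/2} = 2\binom{d-1}{d/2-1}$, so dividing through by $\binom{d-1}{d/2-1}$ turns this into
\[
	2\,e_P(\hat{0},\hat{1}) + \sum_{\rho(q)=1} e_P(q,\hat{1}) + 2\sum_{\rho(q)=d} e_P(\hat{0},q) = 0.
\]
The next step is to rewrite each interval error $e_P(s,t) = \mu_P(s,t) - (-1)^{\rho(t)-\rho(s)}$ in terms of reduced Euler characteristics of $O(P)$ via the dictionary in the excerpt: $\mu_P(\hat{0},\hat{1}) = \tilde{\chi}(O(P))$ from~(\ref{eqn: chi and mu 1}), and for the rank-$1$ element $q$ (respectively the rank-$d$ element $q$), $\mu_P(q,\hat{1})$ (respectively $\mu_P(\hat{0},q)$) equals, up to sign, $\tilde{\chi}$ of the link in $O(P)$ of the corresponding vertex $v_q$. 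Concretely, $\lk_{O(P)} v_q = O(\hat{0},q) \ast O(q,\hat{1})$; when $\rho(q)=1$ the first factor is the order complex of an empty interval (a $(-1)$-sphere, $\tilde{\chi} = -1$, or rather it contributes the appropriate convention), so $\tilde{\chi}(\lk_{O(P)} v_q) = -\mu_P(q,\hat{1})$ in the rank-$1$ case and $= -\mu_P(\hat{0},q)$ in the rank-$d$ case, since in both cases one of the two join factors is trivial with $\tilde\chi = -1$ after applying $\tilde\chi(\Delta_1 \ast \Delta_2) = -\tilde\chi(\Delta_1)\tilde\chi(\Delta_2)$. I would verify these sign conventions carefully against equation~(\ref{eqn: chi and mu 2}) with $k=1$.

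Substituting, $e_P(\hat0,\hat1) = \tilde\chi(O(P)) - (-1)^{d+1} = \tilde\chi(O(P)) + 1$ (using $d$ even), and for each rank-$1$ or rank-$d$ element $q$, $e_P(q,\hat1)$ or $e_P(\hat0,q) = -\tilde\chi(\lk_{O(P)}v_q) - (-1)^{d-1} = -\tilde\chi(\lk_{O(P)}v_q) + 1$ (again $d$ even, and the relevant interval has length $d$, so the sphere sign is $(-1)^{d-1} = -1$). Plugging these into the displayed identity and combining the two sums over $\rho(q)=1$ and $\rho(q)=d$ into one sum over $\{q : \rho(q) \in \{1,d\}\}$ — noting the coefficient $2$ on the rank-$d$ sum matches the coefficient on $e_P(\hat0,\hat1)$ but the rank-$1$ sum has coefficient $1$; here I must double-check that the two links and hence the $\tilde\chi$ contributions are being weighted consistently with the statement of the corollary, which writes a single unweighted sum $\sum_{\rho(q)=1\text{ or }d}$ — yields
\[
	2\big(\tilde\chi(O(P)) + 1\big) + \sum_{\rho(q)=1\text{ or }d}\big(-\tilde\chi(\lk_{O(P)}v_q) + 1\big)\cdot(\text{coefficient}) = 0,
\]
which rearranges to the claimed $2(\tilde\chi(O(P))+1) = \sum_{\rho(q)=1\text{ or }d} 1 - \sum_{\rho(q)=1\text{ or }d}\tilde\chi(\lk_{O(P)}v_q)$ once the weights are reconciled.

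\textbf{Main obstacle.} The genuinely delicate part is bookkeeping the binomial coefficients and signs: reconciling the coefficient $2\binom{d-1}{d/2-1} = \binom{d}{d/2}$ on the $e_P(\hat0,\hat1)$ and rank-$d$ terms against the coefficient $\binom{d-1}{d/2-1}$ on the rank-$1$ term, and confirming that after translating to $\tilde\chi$ these collapse into the single unweighted sum $\sum_{\rho(q)=1\text{ or }d}$ appearing in the corollary (this presumably works because the rank-$1$ and rank-$d$ errors, while weighted differently in~(\ref{eqn: like terms}), become symmetric after the $\mu \leftrightarrow \tilde\chi$ dictionary is applied, or because a factor of $2$ is being absorbed — I would track this explicitly). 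The Euler-characteristic translation itself is routine given the remark following~(\ref{eqn: chi and mu 2}), but the $(-1)$-sphere convention for the order complex of an empty open interval needs to be pinned down to get the signs right.
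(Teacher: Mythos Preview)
Your overall strategy matches the paper's: specialize to $i=d/2$ so that $\hat h_{d-i}-\hat h_i=0$, then translate interval errors into Euler characteristics of links. The gap is precisely the one you flagged as a concern but did not resolve: Theorem~\ref{1IS-Thm} (equation~(\ref{eqn: like terms})) is stated only for $i>\lfloor d/2\rfloor$, and it does \emph{not} extend verbatim to $i=d/2$. At $i=d/2$ one must instead read off the coefficient of $x^{d/2}$ directly from equation~(\ref{eq:3}); there the $\sideset{}{^*}\sum$ contributes only the extra half-summand $\tfrac12\gamma_{d/2}(Q)\,x^{d/2}$ for each $Q$ of rank $d$ (since $d$ even means $r=d-1$ is odd in Lemma~\ref{1-IS C(Q) lemma}). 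Consequently the correct relation is
\[
\binom{d}{d/2}e_P(\hat0,\hat1)+\binom{d-1}{d/2-1}\sum_{\rho(q)=1}e_P(q,\hat1)+\frac{1}{2}\binom{d}{d/2}\sum_{\rho(q)=d}e_P(\hat0,q)=0,
\]
with a factor $\tfrac12$ on the rank-$d$ sum. Dividing by $\binom{d-1}{d/2-1}=\tfrac12\binom{d}{d/2}$ now gives the symmetric identity
\[
2\,e_P(\hat0,\hat1)+\sum_{\rho(q)=1}e_P(q,\hat1)+\sum_{\rho(q)=d}e_P(\hat0,q)=0,
\]
from which the corollary follows by the routine $\mu\leftrightarrow\tilde\chi$ translation via~(\ref{eqn: chi and mu 1}) and~(\ref{eqn: chi and mu 2}). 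Your displayed identity with the stray coefficient $2$ on the rank-$d$ sum is genuinely wrong, and no adjustments in the Euler-characteristic dictionary can repair it: that dictionary treats rank-$1$ and rank-$d$ vertices identically (in both cases $\tilde\chi(\lk_{O(P)}v_q)=-\mu_P(\hat0,q)\mu_P(q,\hat1)$ by~(\ref{eqn: chi and mu 2}) with $k=1$), so any asymmetry going in persists coming out.
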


\begin{proof}
  If $d$ is even, and $ i = \floor{\frac{d}{2}}$, then $d-i =i$. Equation (\ref{eqn: like terms}) gives: 
	   \begin{multline}
		   0 = \hat{h}_{\frac{d}{2}} - \hat{h}_{\frac{d}{2}} = (-1)^{\frac{d}{2}+1} {d \choose \frac{d}{2}} \big[ \mu_P(\hat{0},\hat{1}) -(-1) \big]
		  \\
		   + \sum_{\rho(q)=1} (-1)^{\frac{d}{2}+1} {d-1 \choose \frac{d}{2} -1} \big[ \mu_P(q,\hat{1}) -1 \big]+ \frac{1}{2} \sum_{\rho(q)=d} (-1)^{\frac{d}{2} -1} {d \choose \frac{d}{2}} \big[ \mu_P(\hat{0},q) -1 \big].
	   \end{multline}
Since $ {d-1 \choose \frac{d}{2} -1} = \frac{1}{2} {d \choose \frac{d}{2}}$, this can be simplified to
  	\[
  		0 = \big[ \mu_P(\hat{0},\hat{1}) +1 \big] + \frac{1}{2}  \sum_{\rho(q)=1} \big[\mu_P(q,\hat{1}) - 1   \big]+ \frac{1}{2} \sum_{\rho(q)=d} \big[\mu_P(\hat{0},q) - 1   \big]  .  
  	\]
Hence
	\begin{multline} 
		  2 (\mu_P(\hat{0},\hat{1}) +1) =  \sum_{\rho(q)=1} \big[1- \mu_P(q,\hat{1}) \big] +\sum_{\rho(q)=d} \big[1 -\mu_P(\hat{0},q)   \big]  \\
		  =\sum_{\rho(q)=1, d} 1 + \sum_{\rho(q)=1, d} \mu_P(\hat{0},q)\cdot \mu_P(q,\hat{1}) = \sum_{\rho(q)=1, d} 1 - \sum_{\rho(q)=1, d} \tilde{\chi}\big(\lk_{O(P)}F_{\{q\}}\big).\nonumber
	\end{multline}
\end{proof}

\begin{rem}
	The above corollary is a generalization of a result from \cite{MR2943717} asserting that for a $(d-1)$-dimensional simplicial psuedomanifold $\Delta$ with isolated singularities,
		\[ 
			2(\tilde{\chi}(\Delta) +1) = |V| - \sum_{v\in V} \tilde{\chi}(\lk_\Delta v). 
		\]
\end{rem}


\section{Posets with singularities of higher degrees}\label{section: j-Sing}
This section generalizes several results from Section \ref{sect:1-Sing posets}, most notably Theorem \ref{1IS-Thm}.

\subsection{$j$-Sing Posets}
We start with a definition of posets with singularities of degree at most $j$. A similar notion was introduced in \cite{MR1867234}.

\begin{definition}\label{Def j-Sing}
	For a finite graded poset $P$ of rank $\rho(P)=d+1$, we recursively define the notion of $j$-{\bf Sing}:
	   \begin{itemize}
		    \item[] 
		    \subitem $P$ is {\bf$(-1)$-Sing} if $P$ is Eulerian.
		    \subitem $P$ is {\bf $0$-Sing} if $P$ is semi-Eulerian.
		    \subitem $P$ is {\bf $j$-Sing} if every interval of length $\leq d$ in $P$ is {\bf $(j-1)$-Sing}.
	\end{itemize}
\end{definition}

A few remarks are in order.
\begin{rem}\label{remark: Def j-Sing}
	A poset $P$ is $j$-Sing if and only if for all $s\leq j+1$, every interval of length $\leq d+1-s$ in $P$ is $(j-s)$-Sing. Also, $P$ is $j$-Sing if and only if every interval of length $\leq d-j$ in $P$ is Eulerian, i.e., every such interval $[s,t]$ has $e_P(s,t)=0$.
\end{rem}

\begin{rem}\label{remark: odd j even d}
	When $j$ is odd (even, resp.), all $j$-Sing posets P with even (odd, resp.) rank are in fact $(j-1)$-Sing. This follows from Definition \ref{Def j-Sing} and the fact that every semi-Eulerian poset of odd rank is actually Eulerian.
\end{rem}

\begin{definition}
	A $(d-1)$-dimensional pure simplicial complex $\Delta$ is called a $j$-{\bf singular complex} if $\varepsilon_{\Delta}(F)=0$ for every face $F\in \Delta$ of $\dim(F)\geq j$, i.e., $\tilde{\chi}(\lk_\Delta F ) = (-1)^{d-1-|F|}$.
\end{definition}

\begin{proposition}\label{prop: j-Sing and j-CSing} 
	The following are equivalent:
	\begin{enumerate}
		\item[(1).] A poset $P$ is $j$-Sing.
		\item[(2).] The order complex of $P$, $O(P)$, is a $j$-singular complex.
		\item[(3).] For every chain $C = \{\hat{0}=t_0 <t_1 <\dots <t_{i-1}<t_i=\hat{1}  \}$ in $P$ such that $i > j+1$,
		the (chain) error $\varepsilon_P(C)=0$.
	\end{enumerate}
\end{proposition}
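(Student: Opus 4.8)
The plan is to prove the three-way equivalence by establishing (1)$\Leftrightarrow$(2) via the dictionary between interval errors in $P$ and link errors in $O(P)$, and then (2)$\Leftrightarrow$(3) via the relationship between chain errors and link errors already recorded in Remark \ref{rmk: chain error = link error}. The key translation tool is formula (\ref{eqn: chi and mu 2}), which expresses $\tilde\chi(\lk_{O(P)} F_C)$ as a product of M\"obius values $\mu_P(\hat 0,t_1)\mu_P(t_1,t_2)\cdots\mu_P(t_k,\hat 1)$ along the chain $C$.

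For (2)$\Leftrightarrow$(3): by Remark \ref{rmk: chain error = link error} we have $\varepsilon_P(C)=\varepsilon_{O(P)}(F_C)$ for every chain $C$ in $P\setminus\{\hat 0,\hat 1\}$, and as $C$ ranges over all such chains, $F_C$ ranges over all faces of $O(P)$, with $\dim(F_C)=|C|-1$. A chain $C=\{t_1<\dots<t_i\}$ in $P\setminus\{\hat 0,\hat 1\}$ has $|C|=i$, which in the notation of item (3) corresponds to the chain $\hat 0=t_0<t_1<\dots<t_i<t_{i+1}=\hat 1$ of "length $i+2$" — so item (3)'s condition "$i>j+1$ forces $\varepsilon_P(C)=0$" becomes, after reindexing, "$\dim(F_C)=|C|-1\ge j$ forces $\varepsilon_{O(P)}(F_C)=0$", which is precisely the definition of $O(P)$ being a $j$-singular complex. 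This is essentially bookkeeping once the indexing is pinned down carefully.

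The substantive step is (1)$\Leftrightarrow$(2), which I would prove by induction on $j$, using the recursive structure of Definition \ref{Def j-Sing} together with Remark \ref{remark: Def j-Sing}. The cleanest route is to invoke the equivalent characterization from Remark \ref{remark: Def j-Sing}: $P$ is $j$-Sing iff every interval $[s,t]$ in $P$ with $\rho(t)-\rho(s)\le d-j$ is Eulerian, i.e. $e_P(s,t)=0$. I would then show this is equivalent to: for every nonempty chain $C=\{t_1<\dots<t_k\}$ in $P\setminus\{\hat 0,\hat 1\}$ with $\dim F_C=k-1\ge j$, the product $\mu_P(\hat 0,t_1)\cdots\mu_P(t_k,\hat 1)$ equals $(-1)^{k+1}\cdot(-1)^{d+1}=(-1)^{d-k}$ (equivalently $\varepsilon_{O(P)}(F_C)=0$, via (\ref{eqn: chi and mu 2}) and the fact that $\dim(\lk_{O(P)}F_C)=d-1-k$). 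For the forward direction: if $C$ has $k\ge j$ interior elements then the chain $\hat 0<t_1<\dots<t_k<\hat 1$ has $k+1$ covering "gaps", and since $k+1\ge j+1$, at least... more carefully, each gap interval $[t_{\ell},t_{\ell+1}]$ has rank difference at most $(d+1)-k \le (d+1)-j = d-(j-1)$; I need the bound $d-j$, so I should instead argue that a generic such gap, together with the pigeonhole constraint that the $k+1$ gaps sum to $d+1$, forces every gap to have length $\le d-j$ once $k\ge j$ — hence every gap is Eulerian and the product of $\mu$'s telescopes to the Eulerian value. For the reverse direction, given an interval $[s,t]$ of length $\le d-j$, I extend it (using that $P$ is graded with $\hat 0,\hat 1$) to a maximal chain through $s$ and $t$; the portion strictly between $s$ and $t$ together with enough room outside yields a chain $C$ with $\ge j$ interior points whose link error, being zero, forces (after cancelling the already-Eulerian adjacent gaps) $\mu_P(s,t)=(-1)^{\rho(t)-\rho(s)}$.

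The main obstacle I anticipate is the careful handling of the index/rank arithmetic in the (1)$\Leftrightarrow$(2) step: matching "interval of length $\le d-j$" with "chain with $\ge j$ interior elements" requires a clean pigeonhole/extension argument and attention to the boundary cases $j=-1,0$ (where the statement must reduce to "Eulerian" and "semi-Eulerian" respectively, matching the base of the recursion), as well as the parity subtlety noted in Remark \ref{remark: odd j even d}. I would isolate the combinatorial core as a short lemma: \emph{an interval $[s,t]$ has $e_P(s,t)=0$ for all $[s,t]$ with $\rho(t)-\rho(s)\le d-j$ if and only if $\mu_P(\hat 0,t_1)\cdots\mu_P(t_k,\hat 1)=(-1)^{d-k}$ for all chains $\{t_1<\dots<t_k\}$ with $k\ge j$}, proved by telescoping products of M\"obius values along maximal chains, and then the proposition follows by combining this lemma with (\ref{eqn: chi and mu 2}) and Remark \ref{rmk: chain error = link error}. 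The rest is routine.
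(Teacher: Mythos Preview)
Your strategy is the same as the paper's: use Remark~\ref{rmk: chain error = link error} for (2)$\Leftrightarrow$(3), use the characterization ``$P$ is $j$-Sing iff every interval of length $\le d-j$ is Eulerian'' from Remark~\ref{remark: Def j-Sing} for (1)$\Rightarrow$(3) via pigeonhole on the gap lengths, and extend a short interval $[s,t]$ by maximal chains in $[\hat 0,s]$ and $[t,\hat 1]$ for (3)$\Rightarrow$(1). The induction on $j$, the boundary cases $j=-1,0$, and the parity issue from Remark~\ref{remark: odd j even d} are all red herrings here; none of them enters the argument.

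The one real slip is your index in the pigeonhole step. The condition ``$\dim F_C\ge j$'' means the chain has $k\ge j+1$ interior elements, not $k\ge j$. With $k\ge j+1$ you get $k+1\ge j+2$ gaps summing to $d+1$, and since the other $k$ gaps each have length $\ge 1$, the largest gap has length at most $(d+1)-k\le d-j$, which is exactly the Eulerian threshold you need. Your version with $k\ge j$ only gives $d-j+1$, which is why you found yourself ``needing the bound $d-j$'' and reaching for a patch; the patch is just to use the correct inequality $k\ge j+1$. Similarly, in the reverse direction the chain you build (maximal below $s$, maximal above $t$, single gap $[s,t]$) has $k=(d+1)-(\rho(t)-\rho(s))+1\ge j+2$ elements in $P\setminus\{\hat 0,\hat 1\}$, so it falls under (3) and the length-$1$ gaps cancel to force $\mu_P(s,t)=(-1)^{\rho(t)-\rho(s)}$.
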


\begin{proof} Assume $P$ has rank $d+1$, and so $\dim O(P)=d-1$. 
\begin{enumerate}
	\item[(2)$\Longleftrightarrow$(3)] 
		This is clear since by definitions, $\varepsilon_P(C) =\varepsilon_{O(P)}(F_C)$, and every chain $C$ in (3) corresponds to a face $F_C\in O(P)$ of dimension $\geq j$.
	\item[(1)$\Longrightarrow$(3)] 
		$P$ is $j$-Sing if and only if every interval of length $\leq d-j$ in $P$ is Eulerian. Therefore, for any interval $[s,t]$ in $P$ with $\rho(t)-\rho(s)\leq d-j$, 
		$\mu_P(s,t)=(-1)^{\rho(t)-\rho(s)}$. For any chain $C$ as in (3), 
			\[
				\rho(t_k) - \rho(t_{k-1})\leq d-j \quad \text{ for all } 1\leq k\leq i,
			\] 
		therefore every interval $[t_{k-1},\;t_k]$ is Eulerian. This implies $\mu_P(C)=(-1)^{d+1}$ and so $\varepsilon_P(C) =0$.

	\item[(3)$\Longrightarrow$(1)] 
		Pick any interval $[s,t]$ in P with $\rho(t)-\rho(s) \leq d-j$. Consider a maximal chain $(\hat{0}=t_0<t_1<\cdots<t_{\rho(s)}=s)$ in $[\hat{0},s]$ and a maximal chain $(t=t_{\rho(s)+1}<t_{\rho(s)+2}<\cdots<t_k=\hat{1})$ in $[t,\hat{1}]$, and let C be the union of these two chains: 	
			\[
				C=(\hat{0}=t_0<t_1<\dots<s<t<\dots t_{k-1}<t_k=\hat{1}).
			\]
		In particular, $\rho(t_i)-\rho(t_{i-1})=1$ unless $i=\rho(s)+1$. Then C has length $k\geq j$, and so by (3),
			\[
				\mu_P(\hat{0},t_1) \mu_P(t_1,t_2)\dots \mu_P(s,t) \dots\mu_P(t_{i-1},\hat{1}) = (-1)^{d+1}.
			\]
		Since intervals of length $1$ always have the M\"{o}bius value $-1$, this forces $\mu_P(s,t)= (-1)^{\rho(t)-\rho(s)}$. All intervals of length $\leq d-j$ are Eulerian, therefore $P$ is $j$-Sing.
	\end{enumerate}
\end{proof}
To make the exposition cleaner, we introduce the following notation: 

\begin{definition}
For any $j$-Sing poset $P$ of rank $\rho(P)=d+1$, define
	\[ 
		A^{(j)}_k (P):= \hat{h}_{d-k}(P) - \hat{h}_{k}(P).
	\]

\end{definition}

Note that any graded poset of rank $d+1$ is automatically $(d-1)$-Sing. The following claim on $A^{(j)}_0(P)$ can be shown by an easy induction on $j$.

\begin{proposition}\label{prop:A_0}
	For any graded poset $P$ of rank $d+1$, 
		\[
			A^{(j)}_0 (P)= (-1)^d \cdot e_P(\hat{0},\hat{1}).
		\]
\end{proposition}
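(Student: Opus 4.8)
The plan is to read off the two extreme coefficients $\hat{h}_0(P)$ and $\hat{h}_d(P)$ directly from the recursive definition of the toric $h$-polynomial and then subtract. Since $A^{(j)}_0(P) = \hat{h}_{d}(P) - \hat{h}_0(P)$ makes no reference to $j$, I would ignore the $j$-Sing hypothesis entirely and simply induct on the rank $\rho(P)=d+1$, taking $P=\mathbbm{1}$ as the (trivial) base case.

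First I would pin down the leading coefficient $\hat{h}_0(P)$, i.e. the coefficient of $x^d$ in $\hat{h}(P,x)=\sum_{Q\in\widetilde{P},\,Q\neq P}\hat{g}(Q,x)(x-1)^{d-\rho(Q)}$. The summand for $Q=\mathbbm{1}$ is exactly $(x-1)^d$ and contributes $1$ to the coefficient of $x^d$; for every other $Q$ the definition gives $\deg\hat{g}(Q,x)=\lfloor(\rho(Q)-1)/2\rfloor$, so that summand has degree at most $\lfloor(\rho(Q)-1)/2\rfloor+d-\rho(Q)<d$ because $1\le\rho(Q)\le d$. Hence only the $\mathbbm{1}$-term reaches the top, and $\hat{h}_0(P)=1$ for every graded poset $P$.

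Next I would compute the constant term $\hat{h}_d(P)=\hat{h}(P,0)$. Setting $x=0$ in the recursion gives $\hat{h}(P,0)=\sum_{Q\neq P}\hat{g}(Q,0)(-1)^{d-\rho(Q)}$, and a glance at the definition of $\hat{g}$ shows $\hat{g}(Q,0)=\hat{h}(Q,0)$ for every $Q$ (the constant terms of $\hat{g}(Q,x)$ and $\hat{h}(Q,x)$ coincide, and $\hat{g}(\mathbbm{1},x)=\hat{h}(\mathbbm{1},x)=1$). Writing each relevant $Q$ as $[\hat{0},t]$ with $t<\hat{1}$, using $\rho([\hat{0},t])=\rho(t)$ and $\mu_{[\hat{0},t]}(\hat{0},t)=\mu_P(\hat{0},t)$, and invoking the inductive hypothesis $\hat{h}([\hat{0},t],0)=(-1)^{\rho(t)}\mu_P(\hat{0},t)$, the sum collapses to $(-1)^d\sum_{t<\hat{1}}\mu_P(\hat{0},t)$, which by the defining relation $\sum_{t\le\hat{1}}\mu_P(\hat{0},t)=0$ of the M\"obius function equals $-(-1)^d\mu_P(\hat{0},\hat{1})=(-1)^{d+1}\mu_P(\hat{0},\hat{1})$. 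This closes the induction: $\hat{h}_d(P)=(-1)^{d+1}\mu_P(\hat{0},\hat{1})$.

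Finally I would subtract the two computations:
\[
A^{(j)}_0(P)=\hat{h}_d(P)-\hat{h}_0(P)=(-1)^{d+1}\mu_P(\hat{0},\hat{1})-1=(-1)^{d+1}\big(\mu_P(\hat{0},\hat{1})-(-1)^{d+1}\big)=(-1)^{d+1}e_P(\hat{0},\hat{1}),
\]
which is the asserted identity (note $(-1)^{d+1}=(-1)^{d-1}$; as a sanity check, for semi-Eulerian $P$ this is precisely the $i=0$ instance of Theorem~\ref{thm:Swartz}). I do not expect a genuine obstacle: the only two points needing a moment's care are the degree bookkeeping that isolates the top coefficient in the first step and the elementary identity $\hat{g}(Q,0)=\hat{h}(Q,0)$ feeding the second, after which everything is the standard M\"obius recursion.
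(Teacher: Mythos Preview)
Your argument is correct and is essentially the paper's own: the paper states that the claim ``can be shown by an easy induction on $j$,'' and its (commented-out) proof does exactly what you do---show $\hat h_0(P)=1$, then prove $\hat h_d(P)=(-1)^{d+1}\mu_P(\hat 0,\hat 1)$ inductively via $\hat g(Q,0)=\hat h(Q,0)$ and the M\"obius recursion, and subtract. Your induction on the rank rather than on $j$ is only a cosmetic difference: since every graded poset of rank $d+1$ is automatically $(d-1)$-Sing, the two inductions walk through the same lower intervals in the same order.

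One point worth flagging: your final formula $A^{(j)}_0(P)=(-1)^{d+1}e_P(\hat 0,\hat 1)$ differs in sign from the displayed $(-1)^d$ in the statement. Your sign is the correct one---it agrees with the $i=0$ case of Theorem~\ref{thm:Swartz}, and it is the sign actually invoked later in the proof of Corollary~\ref{relations of errors for P} (where $\hat h_0-\hat h_d$ is set equal to $(-1)^d e_P(\hat 0,\hat 1)$). So the discrepancy is a typo in the proposition's statement, not an error in your computation; you should say so rather than declaring it ``the asserted identity.''
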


\subsection{Dehn--Sommerville relations}
We are now in a position to generalize Theorem \ref{1IS-Thm}. First, by comparing the polynomials $\hat{g}(Q) + (x-1)\hat{h}(Q)$ and $x^{\rho(Q)}\hat{g}(Q,\frac{1}{x})$ (as in the proof of Lemma \ref{1-IS C(Q) lemma}), we obtain the following extension of Lemma \ref{1-IS C(Q) lemma}. We omit the proof. \\

\begin{lemma}\label{generalized lemma} 
	Let $Q$ be a $j$-Sing poset with rank $r+1$, and let $y= x-1$. Then
		\[ 
			\hat{g}(Q) + y\hat{h}(Q)= x^{\rho(Q)}\hat{g}(Q,\frac{1}{x}) + \sideset{}{^*}\sum_{k=\floor*{\frac{r+1}{2}} +1}^{r+1} \bigg[A^{(j)}_{k-1} (Q) -A^{(j)}_{k} (Q) \bigg] x^{k},
		\]
	where $\sideset{}{^*}\sum$ means that, if r is odd, then there is an extra summand, $\frac{1}{2}[A^{(j)}_{k-1} (Q) -A^{(j)}_{k} (Q)]x^{k}$ for $k = \floor*{\frac{r+1}{2}}$, which equals $-A^{(j)}_{\floor*{\frac{r+1}{2}}} (Q)\cdot x^{\floor*{\frac{r+1}{2}}}$.
\end{lemma}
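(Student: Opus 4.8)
The plan is to mimic the proof of Lemma~\ref{1-IS C(Q) lemma} verbatim, only replacing the semi-Eulerian input (Theorem~\ref{thm:Swartz}) with the inductive symmetry-defect statement for $j$-Sing posets. Concretely, write $\hat h(Q) = \hat h_r + \hat h_{r-1}x + \cdots + \hat h_0 x^r$ with $r+1=\rho(Q)$ and $s=\floor*{\frac r2}$. From the definition of $\hat g(Q)$ one computes directly that
\[
	\hat g(Q) + y\hat h(Q) = \sum_{k=0}^{s}(\hat h_{r-k}-\hat h_{r-k-1})\,x^{s+1+k}\ +\ \hat h_0 x^{r+1}
\]
(with the convention that the $k=s$ term is already $\hat h_0 x^{r+1}$ when $r$ is even, and one has to be a touch careful about the middle index when $r$ is odd), while by the definition of $\hat g(Q,1/x)$,
\[
	x^{\rho(Q)}\hat g(Q,1/x) = \sum_{k=0}^{s}(\hat h_{r-k}-\hat h_{r-k+1})\,x^{r+1-k}.
\]
Subtracting these, the coefficient of $x^{k}$ in $\hat g(Q)+y\hat h(Q) - x^{\rho(Q)}\hat g(Q,1/x)$ is, for $k$ in the top half, exactly $(\hat h_{d-k}-\hat h_k) - (\hat h_{d-k+1}-\hat h_{k-1}) = A^{(j)}_{k-1}(Q) - A^{(j)}_k(Q)$ once we shift indices to match $\rho(Q)=r+1$; the point is that the contributions from $x^{\rho(Q)}\hat g(Q,1/x)$ cancel the ``lower-half mirror'' part of $\hat g(Q)+y\hat h(Q)$ and leave precisely the telescoping difference of symmetry defects.

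The only genuinely new ingredient beyond Lemma~\ref{1-IS C(Q) lemma} is the middle-term bookkeeping: when $r$ is odd, $r+1$ is even, so $\floor*{\frac{r+1}{2}} = \frac{r+1}{2}$ is a half-integer index situation, and the term at $k=\floor*{\frac{r+1}{2}}$ appears with a coefficient of $\frac12$ because it lies on the axis of symmetry and is ``shared'' between the two halves. Writing $m = \floor*{\frac{r+1}{2}}$, one checks that the coefficient of $x^{m}$ picks up only half of $A^{(j)}_{m-1}(Q)-A^{(j)}_m(Q)$, and since $A^{(j)}_{m-1}(Q) = -A^{(j)}_m(Q)$ in this parity (because $d-(m-1) = m$ when $d=r$ is odd, forcing $\hat h_{d-m+1}-\hat h_{m-1} = -(\hat h_{d-m}-\hat h_m)$... wait, more directly: $A^{(j)}_{m}(Q) = \hat h_{r-m}-\hat h_m$ and $r-m = m-1$ so $A^{(j)}_m(Q) = \hat h_{m-1}-\hat h_m = -A^{(j)}_{m-1}(Q)$ reading the indices the other way — this needs care), the extra summand simplifies to $-A^{(j)}_{m}(Q)\cdot x^{m}$, matching the statement. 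This parity/middle-index accounting is the main obstacle, and it is exactly the place where the ``$^{*}$'' convention in the statement is doing work; everything else is the same linear algebra that produced Lemma~\ref{1-IS C(Q) lemma}.

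I would not actually invoke an induction on $j$ here: the lemma is a purely formal identity about the polynomials $\hat h(Q)$ and $\hat g(Q)$, valid for \emph{any} graded poset $Q$ once we define $A^{(j)}_k(Q) = \hat h_{r-k}(Q)-\hat h_k(Q)$, and the hypothesis ``$Q$ is $j$-Sing'' is used only in the later application (it is what makes the $A^{(j)}_k$ for small $k$ vanish, so that the sum on the right has few terms). So the proof reduces to: (i) expand $\hat g(Q)+y\hat h(Q)$ from the definition of $\hat g$; (ii) expand $x^{\rho(Q)}\hat g(Q,1/x)$; (iii) subtract and read off coefficients, handling the $r$ even and $r$ odd cases separately for the middle index. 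Since the excerpt explicitly says ``We omit the proof,'' the honest thing is to record that it follows by the identical coefficient comparison used in Lemma~\ref{1-IS C(Q) lemma}, with the role of Swartz's relation $\hat h_{r-k}-\hat h_k = (-1)^{r-k+1}\binom rk e_Q(\hat 0,\hat 1)$ now played by the definition $A^{(j)}_k(Q) = \hat h_{r-k}(Q)-\hat h_k(Q)$ itself — i.e.\ we no longer substitute a closed form for the defect, we simply leave it named — and with the parity bookkeeping for the axis-of-symmetry term carried out as above.
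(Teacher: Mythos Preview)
Your proposal is correct and follows exactly the approach the paper indicates: it says to compare $\hat g(Q)+(x-1)\hat h(Q)$ and $x^{\rho(Q)}\hat g(Q,1/x)$ ``as in the proof of Lemma~\ref{1-IS C(Q) lemma}'' and then omits the details, which is precisely the coefficient comparison you carry out, with the only change being that the closed form from Swartz's theorem is replaced by the bare definition $A^{(j)}_k(Q)=\hat h_{r-k}(Q)-\hat h_k(Q)$. Your observation that the $j$-Sing hypothesis plays no role in the lemma itself (it is a formal polynomial identity) is correct and worth keeping; your middle-term check $A^{(j)}_{m-1}(Q)=-A^{(j)}_m(Q)$ when $r$ is odd and $m=\floor*{(r+1)/2}$ is also right, since then $r-m=m-1$.
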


The first main result of this section is the following generalization of Theorem \ref{1IS-Thm}:

\begin{theorem}\label{generalized Thm}
	Let $P$ be a $j$-Sing poset with rank $d+1$, where $-1 \leq j \leq d$. For $q\in P$, denote by $Q$ the interval $[\hat{0},q]$. Then
		\begin{multline}\label{eqn:j-Sing}
		\hat{h}(P) - x^d \hat{h}(P,\frac{1}{x}) = -\sum_{\rho(q)\leq j} \hat{g}(Q,\frac{1}{x}) \cdot e_P(q,\hat{1}) \cdot y^{d-\rho(q)}\cdot x^{\rho(q)} \\
		  -\sum_{d-j<\rho(q)\leq d} \quad \sideset{}{^*}\sum\limits_{k=\floor{\frac{\rho(q)}{2}}+1}^{\rho(q)} \bigg(A^{(j+\rho(q)-d-1)}_{k-1} (Q) -A^{(j+\rho(q)-d-1)}_{k} (Q) \bigg)\cdot \mu(q,\hat{1})\cdot y^{d-\rho(q)}x^{k}.
		\end{multline}
\end{theorem}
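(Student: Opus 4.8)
The plan is to combine the general identity of Lemma \ref{graded C(Q) lemma} with the structural input provided by the $j$-Sing hypothesis, exactly paralleling the passage from Lemma \ref{graded C(Q) lemma} to equation (\ref{eq:3}) in the proof of Theorem \ref{1IS-Thm}. First I would start from (\ref{eq:2}), which expresses $\hat{h}(P) - x^d\hat{h}(P,1/x)$ as $-e_P(\hat0,\hat1)y^d$ plus a sum over proper lower intervals $Q=[\hat0,q]$ of the two bracketed terms involving $\hat g(Q)+y\hat h(Q)$ and $x^{\rho(Q)}\hat g(Q,1/x)$ respectively. The key observation is that for $\rho(q)\le d$ the interval $Q=[\hat0,q]$, being a proper lower interval of the rank-$(d+1)$ poset $P$, is itself $j'$-Sing for $j' = j+\rho(q)-d-1$ (this is Remark \ref{remark: Def j-Sing}: an interval of length $\rho(q)\le d+1-s$ in a $j$-Sing poset is $(j-s)$-Sing, here with $s = d+1-\rho(q)$). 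In particular when $\rho(q)\le d-j$ the interval $Q$ is Eulerian and $e_P(q,\hat1)=0$, while $\hat g(Q)+y\hat h(Q) = x^{\rho(Q)}\hat g(Q,1/x)$ exactly, so those terms cancel; the surviving contributions come only from $q$ with $\rho(q) > d-j$, together with a low-rank tail from $q$ with $\rho(q)\le j$ where the Möbius error $e_P(q,\hat1)$ need not vanish.

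Concretely, I would split the sum over $Q$ in (\ref{eq:2}) according to the rank of $q$. For $\rho(q)$ in the middle range $j < \rho(q) \le d-j$ (if any), both $Q$ and the dual data are Eulerian enough that the entire summand vanishes. For $\rho(q)$ in the top range $d-j < \rho(q)\le d$, I substitute the generalized Lemma \ref{generalized lemma} applied to $Q$ with $r+1 = \rho(q)$ and the parameter $j+\rho(q)-d-1$: this rewrites $\hat g(Q)+y\hat h(Q)$ as $x^{\rho(Q)}\hat g(Q,1/x)$ plus the starred sum $\sum^*_{k=\lfloor\rho(q)/2\rfloor+1}^{\rho(q)}\bigl(A^{(j+\rho(q)-d-1)}_{k-1}(Q)-A^{(j+\rho(q)-d-1)}_k(Q)\bigr)x^k$. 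The leading $x^{\rho(Q)}\hat g(Q,1/x)$ piece cancels against the $(-y)^{d-\rho(Q)}\hat g(Q,1/x)x^{\rho(Q)}$ term already present in (\ref{eq:2}) up to the sign $(-1)^{d-\rho(Q)}$ absorbed into $\mu_P(q,\hat1)$ — one must track that $-y^{d-\rho(q)}\mu_P(q,\hat1)$ against $-(-y)^{d-\rho(q)}$ combine to leave exactly the $-\mu(q,\hat1)y^{d-\rho(q)}$ coefficient on the starred sum, which is the second line of (\ref{eqn:j-Sing}). For $\rho(q)\le j$ the Eulerian cancellation of the leading pieces still occurs but $e_P(q,\hat1)$ survives, and collecting $-y^{d-\rho(q)}(\hat g(Q)+y\hat h(Q))\mu_P(q,\hat1) - (-y)^{d-\rho(q)}\hat g(Q,1/x)x^{\rho(q)}$ and using $\hat g(Q)+y\hat h(Q)=x^{\rho(Q)}\hat g(Q,1/x)$ together with $\mu_P(q,\hat1)=(-1)^{d-\rho(q)}+e_P(q,\hat1)$ isolates the term $-\hat g(Q,1/x)e_P(q,\hat1)y^{d-\rho(q)}x^{\rho(q)}$, matching the first line of (\ref{eqn:j-Sing}); the $q=\hat0$ contribution $-e_P(\hat0,\hat1)y^d$ is the $\rho(q)=0$ instance of this, since $\hat g(\mathbbm1,1/x)=1$.

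The main obstacle I anticipate is bookkeeping rather than conceptual: keeping the sign conventions consistent across the three ranges (the factors $(-y)^{d-\rho(Q)}$ versus $y^{d-\rho(Q)}$, the shifted index $j+\rho(q)-d-1$ in $A^{(\cdot)}_k$, and the off-by-one in the lower limit $\lfloor\rho(q)/2\rfloor+1$ versus $\lfloor(r+1)/2\rfloor+1$), and verifying that the ``middle range'' $j<\rho(q)\le d-j$ genuinely contributes nothing — this uses that such $Q$ is $(j+\rho(q)-d-1)$-Sing with $j+\rho(q)-d-1 \le -1$, i.e. Eulerian, so that Lemma \ref{generalized lemma}'s starred sum is empty (its lower limit exceeds $r+1$) and $e_P(q,\hat1)=0$. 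One should also check the edge case where the ranges overlap or are empty depending on the size of $j$ relative to $d/2$; since the statement only claims the identity without the hypothesis $j<\lfloor d/2\rfloor$, the formula should be read as a bona fide identity of polynomials with the understanding that empty sums are zero, and no further case analysis is needed. Once the algebra is organized by rank of $q$, the result drops out by equating the reorganized right-hand side of (\ref{eq:2}) with (\ref{eqn:j-Sing}).
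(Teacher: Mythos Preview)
Your approach is essentially the paper's: start from Lemma~\ref{graded C(Q) lemma}, invoke Remark~\ref{remark: Def j-Sing} to see that $Q=[\hat0,q]$ is $(j+\rho(q)-d-1)$-Sing, feed this into Lemma~\ref{generalized lemma}, and split by the rank of $q$. The paper carries out exactly this computation, organizing it as an explicit two-case split according to whether $j<\lfloor d/2\rfloor$ or $j\ge\lfloor d/2\rfloor$.

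One caveat: your low-range analysis for $\rho(q)\le j$ invokes the \emph{Eulerian} identity $\hat g(Q)+y\hat h(Q)=x^{\rho(Q)}\hat g(Q,1/x)$, but when $j\ge\lfloor d/2\rfloor$ and $d-j<\rho(q)\le j$ the interval $Q$ is only $(j+\rho(q)-d-1)$-Sing with nonnegative index, so this identity fails and the starred correction from Lemma~\ref{generalized lemma} is present as well. In that overlap range \emph{both} summands of (\ref{eqn:j-Sing}) contribute, which is precisely why the paper writes out Case~2 separately. Your instinct that ``no further case analysis is needed'' is nonetheless correct in spirit: if you apply Lemma~\ref{generalized lemma} uniformly to every $Q$ and then simply observe that $e_P(q,\hat1)=0$ for $\rho(q)>j$ while the starred sum is empty for $\rho(q)\le d-j$, the two displayed sums in (\ref{eqn:j-Sing}) emerge directly without a case split. (Also, the sign in your decomposition of $\mu_P(q,\hat1)$ should read $(-1)^{d+1-\rho(q)}+e_P(q,\hat1)$, not $(-1)^{d-\rho(q)}+e_P(q,\hat1)$.)
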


Before proving this formula, we first show that for small $j$'s, equation (\ref{eqn:j-Sing}) reduces to previous results.

\begin{example}\leavevmode
	\begin{enumerate}
		\item If $P$ is $(-1)$-Sing, then $P$ is Eulerian. Hence by \cite{MR951205}, $A^{(-1)}_k(P)=0$ for every $k$. This matches equation (\ref{eqn:j-Sing}) as both sums on the right hand-side of (\ref{eqn:j-Sing}) are empty.
		\item If $P$ is $0$-Sing, then $P$ is semi-Eulerian, so that all proper intervals $Q \in \tilde{P}$ are $(-1)$-Sing (i.e., Eulerian). In this case, by \cite{MR2505437}, 
			\[ 
				A^{(0)}_k (P) =  (-1)^{d-k+1} {d \choose k} e_P(\hat{0},\hat{1}). 
			\]
		This coincides with equation (\ref{eqn:j-Sing}) as the summand corresponding to $Q$ of rank $0$ is the only term showing up on the right hand-side of (\ref{eqn:j-Sing}) for $j=0$.

		\item If  $P$ is $1$-Sing, then equation (\ref{eqn:j-Sing}) implies that
		\footnotesize 
			\begin{align}
			\begin{split}\label{formula:1-IS}
				A^{(1)}_k (P) =  (-1)^{d-k+1} {d \choose k} e_P(\hat{0},\hat{1})
			+ \sum_{\rho(Q)=1} (-1)^{d-k+1} {d-1 \choose k-1} e_P(q,\hat{1}) + \sum_{\rho(Q)=d} \bigg(  A^{(0)}_{k-1} (Q) -A^{(0)}_k (Q) \bigg)\\
			= (-1)^{d-k+1} {d \choose k} e_P(\hat{0},\hat{1}) + \sum_{\rho(q)=1} (-1)^{d-k+1} {d-1 \choose k-1} e_P(q,\hat{1})
			+ \sum_{\rho(q)=d} (-1)^{d-k+1} {{d}\choose{k}} e_P(\hat{0},q), 
			\end{split}
			\end{align}
		\normalsize
where for the last step we used that, if $\rho(q)=d$, then the interval $[\hat{0},q]$ is semi-Eulerian. This agrees with our formula in Theorem \ref{1IS-Thm}.
	\end{enumerate}
\end{example}

\begin{proof}[Proof of Theorem \ref{generalized Thm}]
 By Lemma \ref{graded C(Q) lemma}, the following equation holds for an arbitrary graded poset $P$ of rank $d+1$.
		\begin{multline}
			\hat{h}(P,x) - x^d \hat{h} (P, \frac{1}{x}) 
			= - y^{d}e_P(\hat{0},\hat{1})\nonumber\\
			+ \sum_{\substack{Q=[\hat{0},q] \in \tilde{P} \\1\leq \rho(Q)\leq d}} \underbrace{ \bigg(\bigg[ -y^{d-\rho(Q)}(\hat{g}(Q) + y\hat{h}(Q))\mu_P(q,\hat{1})\bigg]
			- \bigg[ (-y)^{d-\rho(Q)} \hat{g}(Q,\frac{1}{x})x^{\rho(Q)}  \bigg] \bigg)}_{C(Q)}.  \nonumber
		\end{multline}
From now on we assume $P$ to be $j$-Sing of rank $d+1$. We are interested in $A^{(j)}_k (P)$, which is the coefficient of $x^k$ in $\hat{h}(P,x) - x^d \hat{h} (P, \frac{1}{x})$.
Since $P$ is $j$-Sing, Remark \ref{remark: Def j-Sing} implies that each interval $Q=[\hat{0},q]\subseteq P$ of rank $r+1$ is $(j+r-d)$-Sing. In particular,
	\[
		\mu(q,\hat{1})=(-1)^{d-\rho(q)+1} \quad\quad \text{ for }\rho(q)> j.
	\]

Using this observation together with Lemma \ref{generalized lemma}, we conclude that the following holds for any interval $Q$ of rank $r+1$:\\
Case 1: $j<\floor{\frac{d}{2}}\leq d-j$.
	{\small
		\[
		C(Q) = 
			\begin{cases}
			  -\hat{g}(Q,\frac{1}{x}) \cdot e_P(q,\hat{1})\cdot y^{d-\rho(Q)}x^{\rho(Q)} & \text{ for }\rho(Q)\leq j \\
			  \\
			  0 &  \text{ for }\rho(Q) \in (j, d-j] \\
			  \\
			  \sideset{}{^*}\sum\limits_{k=\floor{\frac{r+1}{2}}+1}^{r+1} \bigg(A^{(j+r-d)}_{k-1} (Q) -A^{(j+r-d)}_{k} (Q) \bigg)x^{k}\cdot(-y)^{d-\rho(Q)} &  \text{ for } \rho(Q)> d-j.
			\end{cases}
		\]
	}
Case 2: $j\geq \floor{\frac{d}{2}}$.
	{\small 
		\[
		C(Q) = \begin{cases}
		  -\hat{g}(Q,\frac{1}{x}) \cdot e_P(q,\hat{1})\cdot y^{d-\rho(Q)}x^{\rho(Q)}, & \text{ for }\rho(Q)\leq d-j \\
		  \\
		  \\
		  
		  -\hat{g}(Q,\frac{1}{x}) \cdot e_P(q,\hat{1})\cdot y^{d-\rho(Q)}x^{\rho(Q)} \\
		  \quad\quad\quad -\sideset{}{^*}\sum\limits_{k=\floor{\frac{r+1}{2}}+1}^{r+1} \bigg(A^{(j+r-d)}_{k-1} (Q) -A^{(j+r-d)}_{k} (Q) \bigg)\cdot \mu(q,\hat{1})y^{d-\rho(Q)} x^{k}, &  \text{ for }\rho(Q)\in (d-j, j] \\
		  \\
		  \\
		  - \sideset{}{^*}\sum\limits_{k=\floor{\frac{r+1}{2}}+1}^{r+1} \bigg(A^{(j+r-d)}_{k-1} (Q) -A^{(j+r-d)}_{k} (Q) \bigg)\cdot \underbrace{\mu(q,\hat{1})}_{\quad=(-1)^{d-r}}\cdot y^{d-\rho(Q)}x^{k}, &  \text{ for } \rho(Q)\geq j+1.
		\end{cases}
		\]
	}
In both cases, comparing the coefficients on both sides, yields the statement.
\end{proof}

\noindent This shows that the difference between $\hat{h}_{d-k}(P)$ and $\hat{h}_{k}(P)$ is a ``weighted" sum of the error functions of the intervals in $P$. Unfortunately, as $j$ gets larger, the length of our formula expands very quickly. In the rest of this section, we will simplify this formula for $j>\frac{d}{2}$ and $k>\frac{d+j}{2}$, in Theorem \ref{main gen Thm}. Our main tool is the following result, that might be of interest on its own.

\begin{corollary}\label{relations of errors for P}
	Let $P$ be a $j$-Sing poset of rank $d+1$.

	$\bullet$ If $d$ is even, then
		\[ 
			2 e_P(\hat{0},\hat{1}) = - \sum_{1\leq \rho(t)\leq j}e_P(t,\hat{1}) - \sum_{d-j+1\leq \rho(t)\leq d} e_P(\hat{0},t).
		\]

	$\bullet$ If $d$ is odd, then
		\[
			\sum_{1\leq \rho(t)\leq j} e_P(t,\hat{1}) = \sum_{d-j+1\leq \rho(t)\leq d} e_P(\hat{0},t). 
		\]
\end{corollary}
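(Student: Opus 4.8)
The strategy is to compare the ``middle'' coefficients of the anti-palindromic polynomial $f(x):=\hat{h}(P,x)-x^d\hat{h}(P,\tfrac1x)$ on the two sides of the identity of Theorem \ref{generalized Thm}. Since $\deg\hat{h}(P,x)=d$ we have $x^df(1/x)=-f(x)$, and the coefficient of $x^k$ in $f$ is $A^{(j)}_k(P)=\hat{h}_{d-k}(P)-\hat{h}_k(P)$, so $A^{(j)}_k(P)=-A^{(j)}_{d-k}(P)$; hence the sum of the coefficients of $x^{\floor{d/2}}$ and $x^{\lceil d/2\rceil}$ in $f$ is always zero (when $d$ is even this just reads $2A^{(j)}_{d/2}(P)=0$). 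So the plan is to compute those two coefficients of the right-hand side of (\ref{eqn:j-Sing}), add them, and set the total to zero.

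It is convenient to first reduce to the case $d\not\equiv j\pmod 2$. When $d\equiv j\pmod 2$, Remark \ref{remark: odd j even d} tells us $P$ is already $(j-1)$-Sing, and then Remark \ref{remark: Def j-Sing} gives $e_P(t,\hat{1})=0$ for $\rho(t)\ge j$ and $e_P(\hat{0},t)=0$ for $\rho(t)\le d-j+1$; consequently the two sums appearing in the statement for $j$ coincide term by term with those in the statement for $j-1$, and the statement for $j$ follows from that for $j-1$ by induction (the base cases $j=-1,0$, i.e.\ $P$ Eulerian or semi-Eulerian, being immediate --- for $d$ even the relevant intervals are Eulerian and both sides vanish, and for $d$ odd both displayed sums are empty). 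Thus we may assume $d+j$ odd.

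For such $(P,j)$ I would perform the coefficient extraction on the right-hand side of (\ref{eqn:j-Sing}). In the first sum $-\sum_{\rho(q)\le j}\hat{g}(Q,\tfrac1x)\,e_P(q,\hat{1})\,y^{d-\rho(q)}x^{\rho(q)}$, the polynomial $x^{\rho(q)}\hat{g}(Q,\tfrac1x)$ is supported in $x$-degrees between $\rho(q)-\deg\hat{g}(Q)$ and $\rho(q)$, so after multiplying by $y^{d-\rho(q)}=(x-1)^{d-\rho(q)}$ one knows exactly which monomials reach degrees $\floor{d/2}$ and $\lceil d/2\rceil$, and the contribution is a signed binomial combination of the coefficients of $\hat{g}(Q)$; already the $q=\hat{0}$ term contributes $-\binom{d}{\floor{d/2}}\big[(-1)^{\floor{d/2}}+(-1)^{\lceil d/2\rceil}\big]e_P(\hat{0},\hat{1})$, which is $\pm 2\binom{d}{d/2}e_P(\hat{0},\hat{1})$ for $d$ even and $0$ for $d$ odd. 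For the second sum, one invokes Lemma \ref{generalized lemma} to write each $x^{\rho(q)}\hat{g}(Q,\tfrac1x)$ as $\hat{g}(Q)+y\hat{h}(Q)$ minus its ``correction'' term, so that the extraction yields signed binomial combinations of the differences $A^{(j')}_{k-1}(Q)-A^{(j')}_{k}(Q)$ over the intervals $Q$ of rank in $(d-j,d]$ (with $j'=j+\rho(q)-d-1$); applying Theorem \ref{generalized Thm} recursively to these shorter intervals (equivalently, using the inductive hypothesis on them, since they are of lower rank and lower singularity degree) rewrites everything in terms of interval errors $e_P(s,t)$. The heart of the proof is then the cancellation: using the Pascal relation $\binom{n}{k}+\binom{n}{k+1}=\binom{n+1}{k+1}$ and the telescoping of $\sum_k\big(A_{k-1}(Q)-A_k(Q)\big)$, exactly as in the proofs of Lemmas \ref{1-IS C(Q) lemma} and \ref{generalized lemma}, one shows that after adding the two middle coefficients all terms cancel except: a multiple of $e_P(\hat{0},\hat{1})$ (present only for $d$ even); and, for each $q$ with $1\le\rho(q)\le j$ resp.\ $d-j+1\le\rho(q)\le d$, a term $e_P(q,\hat{1})$ resp.\ $e_P(\hat{0},q)$ --- and, crucially, the weight of $e_P(\hat{0},\hat{1})$ comes out exactly twice that of each other surviving term. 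Dividing by the common factor produces the two displayed identities.

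The main obstacle is this cancellation --- concretely, controlling the second sum of (\ref{eqn:j-Sing}), whose coefficients are the recursively defined $A^{(j')}_k(Q)$ for intervals of rank exceeding $d-j$, and matching the many signed binomial sums arising from the factors $(x-1)^{d-\rho(q)}$ against the single binomial $\binom{d}{\floor{d/2}}$ coming from the $q=\hat{0}$ term (the source of the ``factor $2$''). Everything else is formal; the parity reduction of the second paragraph is precisely what keeps this bookkeeping manageable.
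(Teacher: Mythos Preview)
Your plan attacks the wrong coefficient. The paper extracts the coefficient of $x^d$ from both sides of \eqref{eqn:j-Sing}, not the middle one. On the left this is $\hat{h}_0(P)-\hat{h}_d(P)$, which Proposition~\ref{prop:A_0} evaluates directly as a multiple of $e_P(\hat{0},\hat{1})$; on the right, only the top-degree pieces of the various polynomials survive. In the first sum of \eqref{eqn:j-Sing} this top coefficient is (a sign times) the constant term of $\hat g(Q)$, namely $\hat{h}_{\rho(q)-1}(Q)$, which again by Proposition~\ref{prop:A_0} is $(-1)^{\rho(q)}\mu_P(\hat{0},q)$. In the second sum, the only surviving bracket is at $k=\rho(q)$, giving $\hat{h}_0(Q)-\hat{h}_{\rho(q)-1}(Q)$, and once more Proposition~\ref{prop:A_0} converts this directly into $e_P(\hat{0},q)$. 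Equating the two expressions is then a one-line manipulation for each parity of $d$. No induction on $j$, no parity reduction on $(d,j)$, and no recursive unwinding of the $A^{(j')}_k(Q)$ is needed.

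Your middle-coefficient route does mirror how the paper handles the special case $j=1$ in Corollary~\ref{Cor: 1-Sing chi relations}, where the right-hand side of \eqref{eq:3} is already explicit. But for general $j$ the middle coefficients of \eqref{eqn:j-Sing} genuinely depend on \emph{all} of the lower-order quantities $A^{(j')}_k(Q)$ for $d-j<\rho(q)\le d$ and many values of $k$, not just the extreme one. The ``cancellation'' you defer is therefore not bookkeeping but the entire content of the argument; you have asserted the outcome without indicating why the many signed binomial sums collapse to a common weight, and the telescoping and Pascal identities you cite do not by themselves force this. In fact, the place in the paper where such a cancellation \emph{is} carried out (Theorem~\ref{main gen Thm}) relies on the present corollary as an input, so trying to run that machinery here risks circularity. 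Switching to the top coefficient, as the paper does, sidesteps all of this.
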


\begin{proof}

We will only treat the case of $j\geq \lfloor \frac{d}{2}\rfloor$ since the case of $j <\lfloor \frac{d}{2}\rfloor$ is very similar.

By Theorem \ref{generalized Thm},
\footnotesize 
	\begin{align}\label{relation of varepsilons}
		\begin{split}
		\hat{h}_0(P) &-  \hat{h}_d(P) \\
		&= \sum_{0\leq \rho(t)\leq j} (-1)^{\rho(t)+1}\cdot \mu(\hat{0},t) \cdot e_P(t,\hat{1}) + \sum_{d-j+1\leq \rho(t)\leq d} (-1)^{d-\rho(t)}\cdot \big[\hat{h}_0([\hat{0},t]) -\hat{h}_{\rho(t)-1}([\hat{0},t]) \big]\\
		&\stackrel{\star}{=} \sum_{0\leq \rho(t)\leq j} (-1)^{\rho(t)+1}\cdot \mu(\hat{0},t) \cdot e_P(t,\hat{1}) + \sum_{d-j+1\leq \rho(t)\leq d} (-1)^{d-\rho(t)}\cdot (-1)^{\rho(t)+1} e_P(\hat{0},t)\\
		&= -\sum_{0\leq \rho(t)\leq d-j} e_P(t,\hat{1})+ \sum_{d-j+1\leq \rho(t)\leq j} [(-1)^{d+1}\mu(\hat{0},t) -\mu(t,\hat{1})] + \sum_{j+1\leq \rho(t)\leq d}(-1)^{d+1} e_P(\hat{0},t)\\
		&= -\sum_{0\leq \rho(t)\leq j} e_P(t,\hat{1}) + \sum_{d-j+1\leq \rho(t)\leq d}(-1)^{d+1} \cdot e_P(\hat{0},t) 
		\end{split}
	\end{align} 
\normalsize
where the equality ``$\stackrel{\star}{=}$" follows from Proposition \ref{prop:A_0}.
However by Proposition \ref{prop:A_0}, the left hand-side should also equal $(-1)^d e_P(\hat{0},\hat{1})$. Comparing it with the last line of (\ref{relation of varepsilons}) yields the result. 
\end{proof}

If $j<\floor{d/2}$, Corollary \ref{relations of errors for P} is equivalent to the following geometric interpretation that generalizes Corollary \ref{Cor: 1-Sing chi relations}. 

\begin{corollary}\label{relation of varepsilons or O(P)}
	Let $P$ be a $j$-Sing poset of rank $d+1$ with $j<\floor{d/2}$. Let $\varepsilon_{O(P)}$ be the error function associated to $O(P)$.
	
	$\bullet$ If $d$ is even, then
		\[ 
			2 \varepsilon_{O(P)}(\emptyset) = - \sum_{F \in O(P), |F|\leq j} \varepsilon_{O(P)}( F).
		\]
	
	$\bullet$ If $d$ is odd, and for every face $F\in O(P)$ we let $F^{top} $ and $F_{bot}$ denote the top and the bottom element of $F$ (viewing $F$ as a chain in $P\setminus \{\hat{0},\;\hat{1}\}$), then
		\[
			\sum_{\substack{F \in O(P) \\\rho(F^{top})\leq j} }\varepsilon_{O(P)}(F) = \sum_{\substack{F \in O(P) \\\rho(F_{bot})\geq  d-j+1} }\varepsilon_{O(P)}(F) . 
		\]
\end{corollary}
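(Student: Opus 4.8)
The plan is to derive Corollary \ref{relation of varepsilons or O(P)} from Corollary \ref{relations of errors for P} by translating the statements about M\"obius-function errors $e_P(s,t)$ of intervals in $P$ into statements about Euler-characteristic errors $\varepsilon_{O(P)}(F)$ of faces in the order complex. The bridge is the ``chain error'' dictionary developed in Subsection \ref{subsect: poset}: for a chain $C=\{t_1<\dots<t_i\}$ in $P\setminus\{\hat 0,\hat 1\}$ we have $\varepsilon_P(C)=\varepsilon_{O(P)}(F_C)$ (Remark \ref{rmk: chain error = link error}), where $\varepsilon_P(C)=(-1)^{|C|}[\mu_P(C)-(-1)^{d+1}]$ and $\mu_P(C)$ is the product of the M\"obius values over consecutive pieces. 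The key computational point is that, when $j<\floor{d/2}$, every interval appearing in Corollary \ref{relations of errors for P} has length $>d-j>\floor{d/2}>j$, hence its complementary pieces are short, i.e.\ Eulerian, so the product $\mu_P(C)$ collapses to a single $e_P$-error term up to sign.

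First I would handle the even-$d$ case. By Corollary \ref{relations of errors for P}, $2e_P(\hat 0,\hat 1)=-\sum_{1\le\rho(t)\le j}e_P(t,\hat 1)-\sum_{d-j+1\le\rho(t)\le d}e_P(\hat 0,t)$. The left side is $2\varepsilon_{O(P)}(\emptyset)$ by formula (\ref{eqn: chi and mu 1}) together with the definition of $\varepsilon$. For the right side, I would show that the faces $F\in O(P)$ with $|F|\le j$ are exactly the vertices and short chains, and because $P$ is $j$-Sing every such $F$, viewed as a chain, has all its consecutive M\"obius pieces on the $\hat 0$-side and the $\hat 1$-side equal to $\pm1$ except possibly one "long'' piece which is an interval of the form $[\hat 0,t]$ with $\rho(t)\ge d-j+1$ or $[t,\hat 1]$ with $\rho(t)\le j$. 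Concretely, for a face $F$ consisting of a single vertex $v_t$, $\varepsilon_{O(P)}(v_t)=\varepsilon_P(\{t\})=-[\mu_P(\hat0,t)\mu_P(t,\hat1)-(-1)^{d+1}]$, and since $P$ is $j$-Sing exactly one of $[\hat0,t]$, $[t,\hat1]$ can be non-Eulerian when $1\le\rho(t)\le j$ or $d-j+1\le\rho(t)\le d$, so this reduces to $\pm e_P(t,\hat1)$ or $\pm e_P(\hat0,t)$; larger faces $F$ with $|F|\le j$ contribute $0$ because all their interval pieces have length $\le d-j$ hence are Eulerian, forcing $\varepsilon_{O(P)}(F)=0$. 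Keeping track of the signs $(-1)^{|F|}$ and $(-1)^{\rho(t)-\rho(s)}$, one checks the bookkeeping matches, yielding $2\varepsilon_{O(P)}(\emptyset)=-\sum_{|F|\le j}\varepsilon_{O(P)}(F)$.

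For the odd-$d$ case I would run the analogous translation: Corollary \ref{relations of errors for P} gives $\sum_{1\le\rho(t)\le j}e_P(t,\hat1)=\sum_{d-j+1\le\rho(t)\le d}e_P(\hat0,t)$, and I would reinterpret each side via chains. A face $F\in O(P)$ with $\rho(F^{top})\le j$ (where $F^{top}$ is the largest element of $F$) corresponds, after splitting off its long piece $[F^{top},\hat 1]$, to an error $\varepsilon_{O(P)}(F)$ proportional to $e_P(F^{top},\hat 1)$; since the portion of $F$ below $F^{top}$ has all consecutive intervals of length $\le d-j$ hence Eulerian, the only way $\varepsilon_{O(P)}(F)\ne 0$ is that $[F^{top},\hat1]$ is non-Eulerian, and conversely every $t$ with $1\le\rho(t)\le j$ and every maximal-below chain produces such an $F$. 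Summing over all such $F$ collects $e_P(t,\hat1)$ weighted by the number of chains below $t$ in $[\hat0,t]$; the same count appears on the other side with $e_P(\hat0,t)$ and chains above, and the two counting weights agree, giving the stated identity. The main obstacle is precisely this combinatorial bookkeeping---carefully matching which faces of $O(P)$ contribute, verifying the sign $(-1)^{|F|}$ versus the rank-difference sign in $e_P$ cancels correctly, and confirming the multiplicities (numbers of saturated chains on the ``trivial'' side) coincide on both sides---rather than any conceptual difficulty; once the dictionary is set up the algebra is routine. I would therefore organize the write-up as: (i) state the chain-splitting observation that for $j$-Sing $P$ every short sub-chain is Eulerian, (ii) compute $\varepsilon_{O(P)}(F)$ in terms of a single $e_P$-term for the relevant $F$, (iii) substitute into Corollary \ref{relations of errors for P} and simplify, remarking that the restriction $j<\floor{d/2}$ is what guarantees the complementary pieces are genuinely short so that no interval is counted twice.
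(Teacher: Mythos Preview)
Your high-level strategy matches the paper's: translate Corollary \ref{relations of errors for P} into a statement about $\varepsilon_{O(P)}$ via the chain-error dictionary of Remark \ref{rmk: chain error = link error}. However, the execution contains a genuine error that makes the argument break down for every $j\ge 2$.

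The problematic sentence is ``larger faces $F$ with $|F|\le j$ contribute $0$ because all their interval pieces have length $\le d-j$ hence are Eulerian.'' This is false. Take $j=2$, $d$ even, and $F=\{t_1<t_2\}$ with $\rho(t_1)=1$, $\rho(t_2)=2$. Then the piece $[t_2,\hat 1]$ has length $d-1>d-j$, so it need not be Eulerian, and $\varepsilon_{O(P)}(F)$ can be nonzero. In general, whenever $F$ is a chain entirely inside the bottom $j$ ranks (or entirely inside the top $j$ ranks), the single long interval $[F^{top},\hat 1]$ (resp.\ $[\hat 0,F_{bot}]$) survives with length $>d-j$. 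So faces of every size from $1$ to $j$ contribute, not just vertices. Your odd-$d$ argument has the related flaw that the weight attached to $e_P(t,\hat 1)$ is not ``the number of chains below $t$'' but the \emph{alternating} sum $\sum_F(-1)^{|F|}$ over such chains; there is no reason such unsigned counts would match across the two sides.

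The missing idea, which the paper supplies, is to group the nonempty faces with $|F|\le j$ according to their top element $q$ (or symmetrically their bottom element). For $\rho(q)\le j$, every sub-interval of $F=\{t_1<\cdots<t_k=q\}$ except $[q,\hat 1]$ lies inside the Eulerian interval $[\hat 0,q]$, so one gets the exact formula $\varepsilon_{O(P)}(F)=(-1)^{|F|+\rho(q)}e_P(q,\hat 1)$. Summing over all $F$ with $F^{top}=q$ then amounts to computing $\sum_F(-1)^{|F|}=\tilde\chi\big(O([\hat 0,q])\big)=\mu_P(\hat 0,q)=(-1)^{\rho(q)}$, so the whole packet collapses to exactly $e_P(q,\hat 1)$ with coefficient $+1$. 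This Euler-characteristic cancellation is what makes the translation to Corollary \ref{relations of errors for P} go through cleanly; without it the signs $(-1)^{\rho(t)+1}$ that you would get from vertices alone do not match the unsigned sums in Corollary \ref{relations of errors for P}.
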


\begin{proof}
	First notice that when $j<\floor{d/2} \leq d-j$, any face $F$ with $\rho(F^{top})>j$ or $\rho(F_{bot})< d-j+1$ will have $\varepsilon_{O(P)} (F)=0$ since all the intervals defined by this chain are Eulerian. This means
		\[ 
			\sum_{F \in O(P), |F|\leq j} \varepsilon_{O(P)}( F)=   \sum_{\substack{F \in O(P) \\\rho(F^{top})\leq j} }\varepsilon_{O(P)}(F) + \sum_{\substack{F \in O(P) \\\rho(F_{bot})\geq  d-j+1} }\varepsilon_{O(P)}(F) . 
		\]
	
	Now it suffices to show the following claim: for $q,\;t\in P$, $\rho(q)\leq j$ and $\rho(t) \geq d-j+1$,
		\[
			\sum_{\substack{F \in O(P) \\ F^{top}=q} } \varepsilon_{O(P)} (F)  = e_P(q,\hat{1}) \quad\quad\quad\text{and}\quad\quad\quad \sum_{\substack{F \in O(P) \\ F_{bot}=t} } \varepsilon_{O(P)} (F)  = e_P(\hat{0},t).
		\]
	Recall that every face $F$ in $O(P)$ corresponds to a chain in $P\setminus \{\hat{0},\hat{1}\}$, therefore by abusing notation, we can write $F= \{t_1 < t_2 <\dots <t_k =  q\}$. If $\rho(q)\leq j$, 
	then all intervals $[t_i,t_{i+1}]$ are Eulerian, and so
		\[
			\tilde{\chi}( \lk_{O(P)} F) = (-1)^{|F|}\cdot \mu(\hat{0},t_1)\mu(t_1, t_2) \dots \mu(q,\hat{1}) = (-1)^{|F|+\rho(q)}\cdot \mu(q,\hat{1}).  
		\]
	Hence $\varepsilon_{O(P)} (F) = (-1)^{|F|+\rho(q)} e_P(q,\hat{1})$. In particular, if $F$ is a facet in $O([\hat{0},q])$, i.e., a saturated chain in $[\hat{0},q]$, then $\rho(q)=|F|$ and $\varepsilon_{O(P)} (F) = e_P(q,\hat{1})$. If $F$ a ridge in $O([\hat{0},q])$, then $\varepsilon_{O(P)} (F) = - e_P(q,\hat{1})$, etc. This, together with the fact that $O([\hat{0},q])$ is Eulerian, implies
			\begin{equation*} 
				\sum_{\substack{F \in O(P) \\ F^{top}=q} } \varepsilon_{O(P)} (F)  = (-1)^{\rho(q)} \cdot \tilde{\chi}(O\left([\hat{0},q])\right)\cdot e_P(q,\hat{1}) = e_P(q,\hat{1}).
			\end{equation*}
A symmetric argument takes care of the other half of the claim. Our statement now follows from Corollary \ref{relations of errors for P}	
\end{proof}

\begin{rem}
	The polynomial in (\ref{eqn:j-Sing}) is a symmetric polynomial with half of its coefficients negated. In particular, it is the following (here $A_k$ stands for $A_k(P)$):
		\[
			A_d x^d + A_{d-1}x^{d-1}+\dots  + A_{k}x^{k}   +\dots -A_{k} x^{d-k}- \dots- A_{d-1}x -A_d.
		\]
	Therefore the coefficients of $x^k$ and $x^{d-k}$ add up to zero for every $k\neq \frac{d}{2}$. The case of $k=d$ was used to obtain Corollaries \ref{relations of errors for P} and \ref{relation of varepsilons or O(P)}. A natural open problem is the following: can we make use of other equalities arising from comparing the coefficients of $x^k$ and $x^{d-k}$ for $k\neq d$ of this polynomial? 
\end{rem}

The next main theorem in this section will give an explicit formula for $A_k^{(j)}(P)$. Before stating the theorem, we first introduce some notation. Given a poset $T=[\hat{0},t]$ and integers $u$ and $v$, we define the following function.
	\[
		C(T,u,v):= {u-\rho(t) \choose v} - \hat{g}_{\rho(t)-2}(T)\cdot {u-\rho(t) \choose v-1} +\dots  +(-1)^{m} \hat{g}_{\rho(t)-1-m}(T)\cdot {u-\rho(t) \choose v-m }
	\]
where $ m= \floor{(\rho(t)-1)/2}$.

Note that by Pascal's rule on binomial coefficients, for each $u,v$, we have 
	\begin{align}\label{eq:recurrence} 
		C(T,u,v) +C(T,u,v+1) = C(T,u+1,v+1).
	\end{align}

\begin{theorem}\label{main gen Thm} 
	Let $P$ be a $j$-Sing poset of rank $d+1$ (with $d>2j$), and let $k>(d+j)/2$. For each element $t\in P$, let $T:=[\hat{0},t]$, then
		\begin{align}\label{eqn:simplified}
			A^{(j)}_k (P) = (-1)^{k}\sum_{\substack{\rho(t)\leq j }} e_P(t,\hat{1})\cdot C(T,d,k) \quad\quad \text{for }k > (d+j)/2.
		\end{align} 
\end{theorem}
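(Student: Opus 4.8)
The plan is to induct on the singularity parameter $j$. The cases $-1\le j\le 1$ are exactly the results of Stanley (Theorem \ref{thm:Stanley}), Swartz (Theorem \ref{thm:Swartz}) and Theorem \ref{1IS-Thm}, re-expressed through the functions $C(T,u,v)$ as in the Example following Theorem \ref{generalized Thm}; so assume $j\ge 2$ and that the theorem holds for every singularity value $j'<j$, every rank, and every admissible degree. Fix a $j$-Sing poset $P$ of rank $d+1$ with $d>2j$ and an integer $k>(d+j)/2$. The starting point is equation (\ref{eqn:j-Sing}) of Theorem \ref{generalized Thm}: $A^{(j)}_k(P)$ is the coefficient of $x^k$ on its right-hand side, which consists of a \emph{boundary} sum over lower intervals $Q=[\hat 0,q]$ with $\rho(q)\le j$ (containing $\hat g(Q,1/x)$ and $e_P(q,\hat 1)$) and an \emph{interior} sum over lower intervals $Q$ with $d-j<\rho(q)\le d$, built from the differences $A^{(j')}_{\ell-1}(Q)-A^{(j')}_{\ell}(Q)$, where a $Q$ of rank $r+1$ is $(j')$-Sing with $j'=j+r-d\in\{0,\dots,j-1\}$.

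The first step is a range estimate: since $k>(d+j)/2$, the coefficient of $x^k$ in $\big(A^{(j')}_{\ell-1}(Q)-A^{(j')}_{\ell}(Q)\big)\,y^{d-\rho(q)}x^{\ell}$ is nonzero only for those $\ell$ with $\ell-1>(r+j')/2$; in particular the extra starred term at $\ell=\lfloor\rho(q)/2\rfloor$ does not matter for such $k$, and the inductive hypothesis applies to every $A^{(j')}$ that survives. (One also checks $r>2j'$, so the inductive hypothesis is applicable to $Q$ at all.) Substituting the inductive formula and applying the Pascal-type recurrence (\ref{eq:recurrence}) once gives $A^{(j')}_{\ell-1}(Q)-A^{(j')}_{\ell}(Q)=(-1)^{\ell-1}\sum_{\rho(s)\le j'}e_P(s,q)\,C([\hat 0,s],\rho(q),\ell)$; here we also use that $[q,\hat 1]$ has length at most $j<d-j$ and is therefore Eulerian, so $\mu_P(q,\hat 1)$ contributes only the sign $(-1)^{d-\rho(q)+1}$. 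The remaining factor $y^{d-\rho(q)}=(x-1)^{d-\rho(q)}$ is absorbed by iterating (\ref{eq:recurrence}) into the Vandermonde-type identity $\sum_{\ell}\binom{b}{k-\ell}C(T,a,\ell)=C(T,a+b,k)$, collapsing the interior sum's contribution to the coefficient of $x^k$ into $(-1)^{k+1}\sum_{d-j<\rho(q)\le d}\sum_{\rho(s)\le j+\rho(q)-d-1}e_P(s,q)\,C([\hat 0,s],d,k)$. A parallel but simpler computation (using that each such $Q=[\hat 0,q]$ with $\rho(q)\le j$ is Eulerian, so $x^{\rho(q)}\hat g(Q,1/x)$ is a genuine polynomial with leading term $x^{\rho(q)}$, together with (\ref{eq:recurrence}) again) turns the boundary sum's contribution into $(-1)^{d-k+1}\sum_{\rho(q)\le j}e_P(q,\hat 1)\,C([\hat 0,q],d,d-k)$. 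Thus $A^{(j)}_k(P)$ is now a finite combination of terms $e_P(s,t)\,C([\hat 0,s],d,\cdot)$ over pairs $s\le t$ with either $t=\hat 1$ and $\rho(s)\le j$, or $d-j<\rho(t)\le d$ and $\rho(s)\le j+\rho(t)-d-1$.

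It remains to show this equals $(-1)^k\sum_{\rho(t)\le j}e_P(t,\hat 1)\,C([\hat 0,t],d,k)$, and this is where Corollary \ref{relations of errors for P} is essential and where the main difficulty lies. For each fixed $s$ with $\rho(s)\le j-1$, the interval $[s,\hat 1]$ is a $(j-\rho(s))$-Sing poset of rank $d+1-\rho(s)$ (Remark \ref{remark: Def j-Sing}), all of whose intervals $[s,t]$ of length at most $d-j$ are Eulerian; applying Corollary \ref{relations of errors for P} to $[s,\hat 1]$ therefore rewrites $\sum_{t\ge s}e_P(s,t)$ (equivalently, the sum over $d-j<\rho(t)-\rho(s)$, the shorter intervals giving zero) as a linear combination of $e_P(s,\hat 1)$ and the errors $e_P(t,\hat 1)$ with $\rho(s)<\rho(t)\le j$, with coefficients depending on the parity of $d-\rho(s)$. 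Feeding these relations into the interior sum and regrouping by the common error $e_P(t,\hat 1)$, one checks — using the reflection $\binom{d-\rho(t)}{k-\rho(t)}=\binom{d-\rho(t)}{d-k}$ inside the $C$'s and a last application of (\ref{eq:recurrence}) — that the coefficient of each $e_P(t,\hat 1)$ collapses to $(-1)^k\,C([\hat 0,t],d,k)$, while all genuinely interior errors cancel telescopically. The main obstacle is precisely this final bookkeeping: combining the boundary contribution $(-1)^{d-k+1}C([\hat 0,t],d,d-k)$, the interior contributions coming through Corollary \ref{relations of errors for P} (whose shape alternates with the parity of $d-\rho(s)$), and the binomial weights hidden inside the $C$-functions, so that everything telescopes to the single stated sum. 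Checking the extreme case $k=d$, which must reproduce Proposition \ref{prop:A_0} together with Corollary \ref{relations of errors for P}, provides a useful running consistency test, and the Example's treatment of $j\le 1$ already displays the mechanism in miniature.
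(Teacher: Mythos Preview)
Your overall architecture coincides with the paper's proof: induct on $j$, read off the coefficient of $x^k$ in (\ref{eqn:j-Sing}), verify the range so that the inductive hypothesis applies to every surviving $A^{(j')}_\ell(Q)$, combine Pascal (\ref{eq:recurrence}) with Chu--Vandermonde to collapse the interior sum, arrive at the two-term expression
\[
A^{(j)}_k(P)=(-1)^{d-k+1}\!\!\sum_{\rho(t)\le j}e_P(t,\hat 1)\,C(T,d,d-k)\;+\;(-1)^{k+1}\!\!\sum_{\substack{d-j<\rho(q)\le d\\ \rho(s)\le j+\rho(q)-d-1}} e_P(s,q)\,C([\hat 0,s],d,k),
\]
and then invoke Corollary \ref{relations of errors for P} on each upper interval $[s,\hat 1]$ to eliminate the errors $e_P(s,q)$ with $q\ne\hat 1$. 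Up to this point your sketch and the paper's proof are the same.

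The gap is in the last sentence of substance. You assert that, after regrouping by $e_P(t,\hat 1)$, the coefficient collapses ``using the reflection $\binom{d-\rho(t)}{k-\rho(t)}=\binom{d-\rho(t)}{d-k}$ inside the $C$'s and a last application of (\ref{eq:recurrence}).'' That is not what makes the collapse work. After applying Corollary \ref{relations of errors for P}, the coefficient of a fixed $e_P(t,\hat 1)$ is
\[
(-1)^{k+1}C(T,d,d-k)+(-1)^k\sum_{u<t}(-1)^{\rho(u)}C(U,d,k),
\]
a sum ranging over \emph{all} $u<t$, and each $C(U,d,k)$ carries the toric $\hat g$-coefficients of $U=[\hat 0,u]$. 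No combination of binomial reflection and Pascal's rule alone relates this aggregate to $\pm(-1)^kC(T,d,k)$: the $\hat g$-numbers of the proper subintervals do not cancel by elementary identities. The paper closes this step by recognising the displayed quantity as the coefficient of $x^k$ in
\[
-(x-1)^{d-\rho(t)}x^{\rho(t)}\hat g\!\left(T,\tfrac{1}{x}\right)+\sum_{u<t}(x-1)^{d-\rho(u)}\hat g(U,x)
= -(x-1)^{d-\rho(t)}\hat g(T,x),
\]
and this polynomial identity holds precisely because $T=[\hat 0,t]$ is Eulerian (so Lemma \ref{1-IS C(Q) lemma} with zero error gives $\hat g(T)+y\hat h(T)=x^{\rho(T)}\hat g(T,1/x)$, and $\sum_{u<t}(x-1)^{d-\rho(u)}\hat g(U)=(x-1)^{d-\rho(t)+1}\hat h(T)$ by definition). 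In other words, the ``final bookkeeping'' is not bookkeeping at all: it is a genuine application of Stanley's Dehn--Sommerville relations for the Eulerian lower intervals $T$, repackaged via the generating functions. Your sketch should replace the reflection/Pascal explanation with this polynomial identity; otherwise the argument does not close.
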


\begin{proof}
The proof is by induction on j. The base cases of $j=-1$ and $0$ are immediate from Stanley's and Swartz's results, see Theorems \ref{thm:Stanley} and \ref{thm:Swartz}. The $j=1$ case can be easily checked using (\ref{formula:1-IS}) and Corollary \ref{relations of errors for P}. The inductive hypothesis is that for all $j'<j$, all $j'$-Sing posets $Q$ of rank $d'+1$, and all $k>\frac{d'+j'}{2}$,
	\begin{align}\label{inductive hyp} 
		A_k^{(j')}(Q) =  (-1)^{k}\sum_{\substack{\rho(t)\leq j'}} e_P(t,\hat{1})\cdot C(T,d',k). 
	\end{align}

If $P$ is a $j$-Sing poset and $k > \frac{d+j}{2}$, then since $A^{(j)}_k(P)$ is the coefficient of $x^k$ in Equation (\ref{eqn:j-Sing}), 
\footnotesize
	\begin{align}\label{eqn:second half of 6.9} 
	\begin{split}
	A^{(j)}_k(P) &= (-1)^{d-k+1} \sum_{\substack{\rho(t)\leq j }}e(t,\hat{1}) \cdot C(T,d,d-k) \\
	 &-\sum_{\substack{\rho(q)=d-j+b\\ 0<b\leq j\\ k-(d-\rho(q))-1 \leq a\leq k-1}}  (-1)^{k-a}\cdot {d-\rho(q) \choose {k-(a+1)}}\cdot \bigg(A^{(b-1)}_{a} (Q) -A^{(b-1)}_{a+1} (Q) \bigg).
	 \end{split}
	\end{align} 
\normalsize

We now check that the inductive hypothesis applies to all of the summands in the second summation in (\ref{eqn:second half of 6.9}). For each $q$ with $\rho(q)=d-j+b$, $Q = [\hat{0},q]$ is a $(b-1)$-Sing poset. By (\ref{inductive hyp}), for all $a>\frac{(d-j+b-1)+(b-1)}{2} = \frac{d-j}{2} +b-1$,
	\begin{align}\label{eqn:for Q}
		A^{(b-1)}_a(Q) = (-1)^a \sum_{\rho(t)\leq b-1} e_Q(t,\hat{1})\cdot C(T,\rho(Q)-1,a).  
	\end{align}

In (\ref{eqn:second half of 6.9}), the second summation is a sum over $q\in P$ such that $\rho(q) =d-j+b$ and 
	\[
		a \geq  k-d+\rho(q) -1 = k-j+b-1 > \frac{d+j}{2}-j+b-1=\frac{d-j}{2}+b-1.
	\]
Therefore (\ref{eqn:for Q}) holds for all $A^{(b-1)}_a(Q)$'s in (\ref{eqn:second half of 6.9}), and so
\footnotesize
	\begin{align*}
		A^{(j)}_k &(P)= (-1)^{d-k+1} \sum_{\substack{\rho(t)=r+1\leq j \\ m= \floor{r/2} }}e(t,\hat{1})\cdot C(T,d,d-k)\\
	 	&- (-1)^{k}\sum_{\substack{\rho(q)=d-j+b\\ 0<b\leq j\\a\in [ k-(d-\rho(q))-1 ,\; k-1]}}{d-\rho(Q) \choose {k-(a+1)}}\sum_{\rho(t)\leq b-1} e_Q(t,\hat{1}) \bigg[C(T,\rho(Q)-1,a) +C(T,\rho(Q)-1,a+1) \bigg].
	\end{align*} 
\normalsize
Note that $e_Q(t,\hat{1})=e_P(t,q)$, and by recurence relation (\ref{eq:recurrence}), we obtain:
	\begin{align*}
		A^{(j)}_k(P) &= (-1)^{d-k+1} \sum_{\substack{\rho(t)=r+1\leq j \\ m= \floor{r/2} }}e_P(t,\hat{1})\cdot C(T,d,d-k)\\*
		&- (-1)^{k}\sum_{\substack{\rho(q)=d-j+b\\ 0<b\leq j\\* k-(d-\rho(q))-1 \leq a\leq k-1}}\sum_{\rho(t)\leq b-1} e_P(t,q){d-\rho(Q) \choose {k-(a+1)}}\cdot C(T,\rho(Q),a+1).
	\end{align*} 
Since each $C(T,\rho(Q),a+1)$ is an alternating sum of multiples of the binomial coefficients ${\rho(Q) -\rho(T) \choose (a
+1)-c}$ (for some $c$'s), we can use the Chu-Vandermonde identity to conclude that:
	\[ 
		\sum_{a=k-(d-\rho(q))-1}^{k-1} {d-\rho(Q) \choose k-(a+1)}\cdot {\rho(Q) -\rho(T) \choose (a
	+1)-c}  = {d-\rho(T) \choose k-c}.
	\]
This shows that (for $k>\frac{d+j}{2}$),
\footnotesize
	\begin{align}\label{en after ind hyp}
		\begin{split} 
		A^{(j)}_k & (P) = (-1)^{d-k+1} \sum_{\substack{\rho(t)\leq j }}e_P(t,\hat{1})C(T,d,d-k)- (-1)^{k}\sum_{\substack{\rho(q)=d-j+b\\ 0<b\leq j}}\;\sum_{\rho(t)\leq b-1} e_P(t,q)\cdot C(T,d,k).\\
		&= (-1)^{d-k+1} \sum_{\substack{\rho(t)\leq j }}e_P(t,\hat{1})\cdot C(T,d,d-k) 
		\quad + \quad  (-1)^{k+1}\sum_{\substack{\rho(t) \in [0,\; \rho(q)-(d-j+1)] \\ \rho(q) \in [d-j+1,\; d ]}} e_P(t,q)\cdot C(T,d,k).
		\end{split}
	\end{align} 
\normalsize
The second equality holds because in the second summation, $e_P(t,q)= 0$ for all intervals $[t,q]$ of length $\leq d-j$.

The next step is to apply Corollary \ref{relations of errors for P} to all intervals $[t, \hat{1}]$ with $0\leq \rho(t)\leq j-1$ to replace the summands in (\ref{en after ind hyp}) that involve $e_P(t,q)$ with with sums of multiples of $e_P(t', \hat{1})$ where $0\leq \rho(t')\leq j$.

The cases of even and odd $j$'s are slightly different because of the two cases in Corollary \ref{relations of errors for P}. Here we assume $j$ is odd (and hence $d$ is assumed to be even, see Remark \ref{remark: odd j even d}). The proof for the case of even $j$ is similar; we omit it.

As $j$ is odd and $d$ is even, Corollary \ref{relations of errors for P} implies that
	\begin{align*}
		(-&1)^{k+1} \sum_{\substack{\rho(u)=i \\ d-j+1 \leq \rho(q)\leq d}} e_P(u,q)C(U,d,k) \\
		&=
		\begin{cases}
			(-1)^{k}\sum\limits_{\rho(t)=i}2 e_P(t,\hat{1})\cdot C(T,d,k)+ (-1)^{k}\sum\limits_{\substack{\rho(u)=i\\i+1\leq \rho(t)\leq j\\u<t }} e_P(t,\hat{1})\cdot C(U,d,k) \quad \text{if } i\text{ is even}\\
			(-1)^{k+1}\sum\limits_{\substack{\rho(u)=i\\i+1\leq \rho(t)\leq j\\u<t}} e_P(t,\hat{1})\cdot C(U,d,k)\qquad\qquad \qquad\qquad  \qquad\qquad \quad  \qquad \text{if } i \text{ is odd}.
		\end{cases}
	\end{align*}

After these substitutions, all of the summands in equation (\ref{en after ind hyp}) that involve $e_P(u,q)$ with $q\neq \hat{1}$ are replaced with sums of multiples of $e_P(t,\hat{1})$ for some $t\geq u$. Now we have: 
\footnotesize
	\begin{align*}
		&(-1)^{k+1} \sum_{\substack{0\leq \rho(t)\leq j-1 \\d-j+1\leq \rho(q)\leq d  }} e(t,q)C(T,d,k) \\
		=& (-1)^{k} \sum_{i \text{ even}} \left[\sum\limits_{\rho(t)=i} 2e_P(t,\hat{1})C(T,d,k)+\sum\limits_{\substack{\rho(u)=i\\i+1\leq \rho(t)\leq j\\u<t }} e_P(t,\hat{1})C(U,d,k)\right]  + (-1)^{k+1}\sum_{i \text{ odd}}\sum\limits_{\substack{\rho(u)=i\\i+1\leq \rho(t)\leq j\\u<t}} e_P(t,\hat{1})C(U,d,k)\\
		=& (-1)^{k} \sum_{\rho(t)\leq j} e_P(t,\hat{1})\left[\sum\limits_{u<t}C(U,d,k)\cdot (-1)^{\rho(u)}\right]  + (-1)^{k}\sum_{\rho(t) \text{ is even}} e_P(t,\hat{1})2C(T,d,k).
	\end{align*}
\normalsize
Together with (\ref{en after ind hyp}) and the assumption that $d$ is even, this shows
\footnotesize
	\[
		A_k^{(j)}(P)= \sum_{\rho(t)\leq j} (-1)^{k} e_P(t,\hat{1})\cdot \left[ -C(T,d,d-k) + \sum_{u<t} (-1)^{\rho(u)}C(U,d,k)   \right] + (-1)^{k}\sum_{\substack{\rho(t)\leq j \\\rho(t) \text{ even}}} e_P(t,\hat{1})2C(T,d,k). 
	\]
\normalsize
Comparing this with (\ref{eqn:simplified}), it suffices to show that for each $t\in P$ with $\rho(t)\leq j$ and $T=[\hat{0},t]$,
	\begin{align}\label{coeff. rel}
		(-1)^{k+1}C(T,d,d-k) + (-1)^{k}\sum_{u<t} (-1)^{\rho(u)}C(U,d,k) =
		\begin{cases}
		(-1)^k C(T,d,k) \;&\text{ if } \rho(t) \text{ is odd},\\
		(-1)^{k+1} C(T,d,k)\;& \text{ if } \rho(t) \text{ is even}.
		\end{cases}
	\end{align}

Observe that since d is even,
	\begin{itemize}
		\item $(-1)^{k+1} C(T,d,d-k) = \text{ the coefficient of } x^k \text{ in }  -(x-1)^{d-\rho(t)} \cdot x^{\rho(t)}\cdot \hat{g}(T,\frac{1}{x}), $
		
		\item $(-1)^k\sum\limits_{\substack{u<t }} (-1)^{\rho(u)} C(U,d,k) = \text{ the coefficient of } x^k \text{ in } \sum\limits_{\substack{u<t }} (x-1)^{d-\rho(u)}\cdot \hat{g}(U,x), $
	
		\item $(-1)^k C(T,d,k) = \text{ the coefficient of } x^k \text{ in }   (-1)^{\rho(t)}(x-1)^{d-\rho(t)}\cdot \hat{g}(T,x). $
	\end{itemize}
Therefore, (\ref{coeff. rel}) is equivalent to:
	\begin{align}\label{poly.rel}
		-(x-1)^{d-\rho(t)} \cdot x^{\rho(t)}\cdot \hat{g}(T,\frac{1}{x}) +\sum\limits_{\substack{u<t }} (x-1)^{d-\rho(u)}\cdot \hat{g}(U,x)=-(x-1)^{d-\rho(t)}\cdot \hat{g}(T,x).
	\end{align}
This equality holds since by the definition of $\hat{h}(T,x)$, the left hand-side is 
	\begin{align*}
		& -(x-1)^{d-\rho(t)} \cdot x^{\rho(t)}\cdot \hat{g}(T,\frac{1}{x}) +(x-1)^{d-\rho(t)+1} \cdot \hat{h}(T,x) \\*
		&= (x-1)^{d-\rho(t)} \left[-x^{\rho(t)}\cdot \hat{g}(T,\frac{1}{x})     +(x-1) \cdot \hat{h}(T,x) \right]\\*
		&= - (x-1)^{d-\rho(t)} \cdot \hat{g}(T,x).
	\end{align*}
where the last equality holds since $T$ is Eulerian.

This completes the proof that $A_k^{j} = (-1)^k \sum\limits_{0\leq \rho(t)\leq j}e(t,\hat{1}) C(t)$ when $j$ is odd. When $j$ is even, the proof is very similar; we omit it.
\end{proof}


\subsection{The lower Eulerian case}

In this subsection we assume that $P$ is {\bf lower Eulerian}, i.e., all intervals $[0,t]$ are Eulerian for $t\neq \hat{1}$. This is an important subclass of graded posets. For instance, the face posets of all regular CW complexes are lower Eulerian. If $P$ is $j$-Sing and lower Eulerian, the formula of Theorem \ref{generalized Thm} takes on the following simpler form: 
	\[
		A^{(j)}_k (P) =  \sum_{0\leq \rho(q) =r+1\leq j} \sum\limits_{l=0}^{\floor{r/2}} (-1)^{d-k-l+1} {d-\rho(q) \choose k-\rho(q)-l} \cdot e_P(q,\hat{1}) \cdot \hat{g}_{r-l}([\hat{0},q]) .
	\]

\begin{example}
	Let $P$ be a $j$-Sing poset of rank $d+1$, where $j\leq 2$. If $P$ is also lower Eulerian, then
		\[
			\hat{h}_{d-k}(P) - \hat{h}_k(P) = (-1)^{d-k+1}\sum\limits_{\rho(q)\leq 2} {d-\rho(q) \choose k-\rho(q)}e_P(q,\hat{1}).  
		\]
\end{example}

\begin{rem} 
Unfortunately for larger $j$ the situation becomes more complicated. For instance if $j=3$, then using the fact (easy to check) that
		\[
			\hat{g}(Q) =-\mu(Q)+ [f_1(Q) +\mu(Q) -2]x  \quad\quad \text{ for any poset } Q \text{ with } \rho(Q)=3 
		\]
and the assumption that the poset $P$ is lower Eulerian, one can show that
	\begin{multline}
		A^{(3)}_k (P) =(-1)^{d-k+1}\sum\limits_{\rho(q)\leq 2} {d-\rho(q) \choose k-\rho(q)}e(q,\hat{1}) \\
		+ (-1)^{d-k+1}\sum_{\rho(q) = 3} \underbrace{\bigg[ {d-3 \choose k-3}- {d-3 \choose k-4} \cdot \bigg(f_1([0,q])-3)\bigg)  \bigg] }_{\heartsuit}\cdot e_P(q,\hat{1}).\nonumber
	\end{multline}
Observe that ``$\heartsuit$" equals ${d-3 \choose k-3}$ if and only if $f_1(Q)=3$, which is the case when $Q$ is the face poset of a simplex.
\end{rem}

\begin{definition}
	A pure graded poset $P$ is $k$-{\bf lower simplicial} if  for all $t\in P$ with $\rho(t)\leq k$, the interval $[0,t]$ is a Boolean lattice.
\end{definition}

\begin{corollary}\label{Cor: lower simplicial case}
Let $P$ be a $j$-Sing lower Eulerian poset $P$ of rank $d+1$. If $P$ is also $j$-lower simplicial, then for $k>\floor{\frac{d}{2}}$,
	\[
		\hat{h}_{d-k}(P) - \hat{h}_k(P) = (-1)^{d-k+1}\sum\limits_{\rho(q)\leq j} {d-\rho(q) \choose k-\rho(q)} \cdot e_P(q,\hat{1}).  
	\]
\end{corollary}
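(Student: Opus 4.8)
The plan is to read the identity off from Theorem~\ref{generalized Thm}: I will show that, under the two hypotheses, the second sum on the right-hand side of (\ref{eqn:j-Sing}) vanishes and every toric $g$-polynomial in the first sum degenerates to the constant $1$, and then extract the coefficient of $x^k$. The one nontrivial ingredient is a lemma: if $B_{r+1}$ denotes the Boolean lattice of rank $r+1$ (equivalently, the face lattice of an $r$-simplex, with $\mathbbm 1 = B_0$), then $\hat h(B_{r+1},x) = 1 + x + \dots + x^{r}$, and hence $\hat g(B_{r+1},x) = 1$. I would prove this by induction on $r$ directly from the recursive definition of $\hat h$, using that every lower interval of $B_{r+1}$ is again a Boolean lattice (the interval below a subset $S$ being isomorphic to $B_{|S|}$) and that $B_{r+1}$ has exactly ${r+1 \choose s}$ lower intervals isomorphic to $B_s$; by the inductive hypothesis $\hat g(B_s,x)=1$ for $s\le r$, so the inductive step reduces to
\[
\hat h(B_{r+1},x) = \sum_{s=0}^{r}{r+1 \choose s}(x-1)^{r-s} = \frac{\big((x-1)+1\big)^{r+1}-1}{x-1} = 1 + x + \dots + x^{r},
\]
after which the claim about $\hat g$ is immediate, since consecutive coefficients of $\hat h(B_{r+1},x)$ agree.

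Granting the lemma, I would specialize (\ref{eqn:j-Sing}) to $P$. Because $P$ is lower Eulerian, every interval $Q=[\hat 0,q]$ with $\rho(q)\le d$ is Eulerian, so by Stanley's Theorem~\ref{thm:Stanley} its toric $h$-vector is symmetric; hence $A^{(j')}_{k'}(Q)=0$ for all $j'$ and $k'$, and every term of the second sum in (\ref{eqn:j-Sing}) — which is indexed by $q$ with $d-j<\rho(q)\le d$ — vanishes. Because $P$ is $j$-lower simplicial, every interval $Q=[\hat 0,q]$ with $\rho(q)\le j$ is a Boolean lattice, so the lemma gives $\hat g(Q,1/x)=1$. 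Hence (\ref{eqn:j-Sing}) collapses to
\[
\hat h(P,x) - x^{d}\hat h(P,1/x) = -\sum_{\rho(q)\le j} e_P(q,\hat 1)\,(x-1)^{d-\rho(q)}x^{\rho(q)}.
\]
Comparing coefficients of $x^{k}$ then gives the result: the left-hand side contributes $\hat h_{d-k}(P)-\hat h_{k}(P) = A^{(j)}_k(P)$, and the coefficient of $x^{k}$ in $(x-1)^{d-\rho(q)}x^{\rho(q)}$ equals the coefficient of $x^{k-\rho(q)}$ in $(x-1)^{d-\rho(q)}$, namely $(-1)^{d-k}{d-\rho(q) \choose k-\rho(q)}$, so that $\hat h_{d-k}(P)-\hat h_{k}(P) = (-1)^{d-k+1}\sum_{\rho(q)\le j}{d-\rho(q) \choose k-\rho(q)}e_P(q,\hat 1)$, as claimed.

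The main obstacle is the Boolean-lattice lemma, and even that is routine. The point requiring genuine care is the claim that the \emph{entire} second sum in (\ref{eqn:j-Sing}) disappears: this is exactly where the lower Eulerian hypothesis — as opposed to mere $j$-Sing-ness — is used, and it is also why $j$-lower simpliciality must be imposed as a separate condition rather than deduced from lower Euleriannness (for $j\le 2$ it is automatic, since an Eulerian poset of rank $\le 2$ is already Boolean, but not for larger $j$). Finally, I note that the hypothesis $k>\floor{\frac{d}{2}}$ is not actually needed for the computation above — the displayed polynomial is antisymmetric about its middle coefficient, so the complementary range of $k$ carries no new information.
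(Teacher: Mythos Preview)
Your argument is correct and is precisely the route the paper has in mind: the corollary is stated without proof because it is meant to fall out of Theorem~\ref{generalized Thm} once one notes (i) that lower Eulerianness makes every proper lower interval Eulerian, so each $A^{(\cdot)}_{\cdot}(Q)$ in the second sum of~(\ref{eqn:j-Sing}) vanishes by Stanley's theorem, and (ii) that $j$-lower simpliciality forces $\hat g(Q,1/x)=1$ for every $Q$ appearing in the first sum. Your explicit verification of $\hat g(B_{r+1},x)=1$ via the binomial identity is exactly what is needed to justify (ii), and your coefficient extraction matches the paper's displayed ``simpler form'' for lower Eulerian $P$ specialized to the Boolean case.
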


\begin{rem}
In the case that $P$ is the face poset of a simplicial complex $\Delta$ and $P$ is $j$-Sing, Corollary \ref{Cor: lower simplicial case} agrees with the formula in Theorem \ref{DSThm}.
\end{rem}


\subsection{Open problems}

The most natural open problem is to find a ``nice" formula for $A^{(j)}_k(P)$ when $j\geq \frac{d}{2}$. Here $\rho(P)=d+1$ and $\frac{d}{2}\leq k \leq \frac{d+j}{2}$.

To state the next problem, let $P$ be a $j$-Sing poset, and let $P^*$ be the dual poset of $P$. By definition, $P^*$ is also a $j$-Sing poset. Recall that
    \begin{itemize}
    \item When $j=-1$, $A^{(-1)}_k (P) = A^{(-1)}_k (P^*)=0.$
    \item When $j=0$, 
      \[ A^{(0)}_k (P) = (-1)^{d-k+1} {d \choose k} e_P(\hat{0},\hat{1}) = A^{(0)}_k (P^*).\]
      When $d$ is odd and $k=\floor{\frac{d}{2}}$, this means
      \[ \hat{g}_{\floor{\frac{d}{2}}} (P) = \hat{g}_{\floor{\frac{d}{2}}} (P^*). \]
    \item When $j=1$ (and assuming $d$ is even), after cancellations, we obtain
    \[ A^{(1)}_k (P) -A^{(1)}_k (P^*) =(-1)^{d-k}{d-1 \choose k} \bigg[\sum_{\rho(q)=1}e_P(q,\hat{1}) - \sum_{\rho(Q)=d}e_P(\hat{0},q)\bigg]. \] 
    \end{itemize}
This leads to the following question:
\begin{question}
	How do the numbers $A^{(j)}_k (P)$ compare with $A^{(j)}_k (P^*)$?
\end{question}

\section*{Acknowledgments}
The authors would like to thank Ed Swartz for bringing up the problem on Dehn--Sommerville relations of toric $h$-vectors and also for the suggestion on using the short $h$-vectors for the second proof of Theorem \ref{DSThm}. We also thank Isabella Novik for having posed the problem to us and having taken many hours to discuss and to help revise the first few drafts of this paper, and Bennet Goeckner for numerous comments on the draft. The second author is also grateful to Margaret Bayer for valuable questions at the AMS meeting in Florida. We are also thankful to the referee for providing helpful suggestions on how to strengthen our results, which led to a significant improvement of Section \ref{sect:flag DS rel}.

\bibliographystyle{alpha}
\bibliography{dehn_sommerville_biblio}
\end{document}